\documentclass[11pt]{amsart}
\usepackage{amsmath,amsthm,amssymb,enumerate}
\usepackage{setspace}
\addtolength{\topmargin}{-0.6cm}
\addtolength{\textheight}{2.0cm}
\addtolength{\evensidemargin}{-0.6cm}
\addtolength{\oddsidemargin}{-0.6cm}
\addtolength{\textwidth}{2.2cm}

\newtheorem{theorem}{Theorem}[section]
\newtheorem*{theorem*}{Theorem}
\newtheorem{lemma}[theorem]{Lemma}
\newtheorem*{lemma*}{Lemma}

\newtheorem*{example*}{Example}

\newtheorem*{corollary*}{Corollary}
\newtheorem{definition}[theorem]{Definition}
\newtheorem*{definition*}{Definition}
\newtheorem{lemma-definition}[theorem]{Lemma-Definition}
\newtheorem*{lemma-definition*}{Lemma-Definition}
\newtheorem{definition-remark}[theorem]{Definition-Remark}
\newtheorem*{definition-remark*}{Definition-Remark}
\newtheorem{remark}[theorem]{Remark}
\newtheorem*{remark*}{Remark}

\newtheorem*{notation*}{Notation}
\newtheorem{proposition}[theorem]{Proposition}
\newtheorem*{proposition*}{Proposition}
\newtheorem{lemma-notation}[theorem]{Lemma-Notation}

\begin{document}
\setlength{\baselineskip}{1.2\baselineskip}

\title{Quantization of Poisson-CGL extensions}
\author{Yipeng Mi}
\address{
Department of Mathematics   \\
The University of Hong Kong\\
Pokfulam Road, Hong Kong}
\email{littlemi@connect.hku.hk}
\date{}
\begin{abstract} CGL extensions, named after G. Cauchon, K. Goodearl, and E. Letzter, are a special class of noncommutative algebras that are iterated Ore extensions of associative algebras with compatible torus actions. Examples of CGL extensions include quantum Schubert cells and quantized coordinate rings of double Bruhat cells. CGL extensions have been studied extensively in connection with quantum groups and quantum cluster algebras. For a field $\mathbf{k}$ of characteristic $0$, let $L=\mathbf{k}[q^{\pm 1}]$ be the $\mathbf{k}$-algebra of Laurent polynomials in the single variable $q$ and let $\mathbb{K}=\mathbf{k}(q)$ be the fraction field of $L$. We introduce quantum-CGL extensions as certain $L$-forms of CGL extensions over $\mathbb{K}$, which have Poisson-CGL extensions as their semiclassical limits. Poisson-CGL extensions, recently introduced and systematically studied by K. Goodearl and M. Yakimov, are certain Poisson polynomial algebras which admit presentations as iterated Poisson-Ore extensions with compatible torus actions. Examples of Poisson-CGL extensions include the coordinate rings of matrix affine Poisson spaces and more generally those of Schubert cells. We describe an explicit procedure for constructing a symmetric quantum-CGL extension from a symmetric integral Poisson-CGL extension and establish the uniqueness of such a quantization in a proper sense.

\end{abstract}
\maketitle
\hfill

\section{Introduction}\label{sec-int}

\subsection{Introduction}\label{subsec-int}

In this paper, we explicitly quantize a special class of Poisson polynomial algebras. We first make precise the notion of quantization used in this paper.

Fix a field $\mathbf{k}$ of characteristic $0$ and let
\[ L=\mathbf{k}[q^{\pm 1}]=\mathbf{k}[q,q^{-1}] \]
be the $\mathbf{k}$-algebra of Laurent polynomials in the single variable $q$. Recall (see, for example, \cite[III.5.4]{BG} and \cite[$\S$3.1]{CP}) that if $\mathcal{A}$ is an $L$-algebra such that $q-1 \in \mathcal{A}$ is not a zero divisor and if the quotient $\mathcal{A}/(q-1)\mathcal{A}$ is commutative, then the $\mathbf{k}$-algebra $\mathcal{A}/(q-1)\mathcal{A}$ has a Poisson bracket $\{ \ , \ \}$ given by
\[ \{a+(q-1)\mathcal{A}, \ b+(q-1)\mathcal{A}\}=\frac{ab-ba}{q-1}+(q-1)\mathcal{A}, \ \ a,b \in \mathcal{A}. \]
The Poisson $\mathbf{k}$-algebra
\[ (\mathcal{A}/(q-1)\mathcal{A}, \ \{ \ , \ \}) \]
is called the semiclassical limit of $\mathcal{A}$. The following definition of quantization is sufficient for the purpose of this paper.

\begin{definition}
\label{qua-alg}
{\rm
1) By a quantum algebra over $L$, or a quantum $L$-algebra, we mean a unital $L$-algebra $A$ that is a free $L$-module and
such that $\mathcal{A}/(q-1)\mathcal{A}$ is commutative;

2) Given a Poisson $\mathbf{k}$-algebra $(A, \ \{ \ , \ \})$, by an $L$-quantization of $A$ we mean a quantum $L$-algebra $\mathcal{A}$ together with a Poisson $\mathbf{k}$-algebra isomorphism
\[ (\mathcal{A}/(q-1)\mathcal{A}, \ \{ \ , \ \}) \longrightarrow (A, \ \{ \ , \ \}). \]
\hfill $\diamond$
}
\end{definition}

For any finite dimensional Lie algebra $\mathfrak{g}$ over $\mathbf{k}$, the universal enveloping algebra $U(\mathfrak{g})$ of
$\mathfrak{g}$ can be made into an $L$-quantization of the Poisson algebra $S(\mathfrak{g})$ (see \cite[$\S$2.1]{De}). Other well-known examples include $L$-quantizations of the standard multiplicative Poisson structures on a semi-simple algebraic group $G$ and on its dual Poisson group, as proven in \cite[Theorem 6.1]{De} and \cite[Theorem 3.1]{De} respectively.

The Poisson polynomial $\mathbf{k}$-algebras that we quantize can be presented as Poisson-CGL extensions. The term "Poisson-CGL extensions" is introduced by K. Goodearl and M. Yakimov in \cite{PC}, and they are special examples of iterated Poisson-Ore extensions \cite{Oh} with compatible torus actions. See $\S$\ref{sec-Poi-CGL} for more details. Prime ideals of Poisson-CGL extensions are studied by K. Goodearl and S. Launois in \cite{DM-p}. On the other hand, cluster algebra structures on symmetric Poisson-CGL extensions are systematically studied in \cite{PC}. Typical examples of Poisson-CGL extensions include the coordinate rings of matrix affine Poisson spaces \cite{BGY} and many other Poisson algebras arising as the semiclassical limits of quantized coordinate rings. A family of Poisson-CGL extensions come from Bott-Samelson varieties \cite{EL}, which are the motivating examples of this paper. 

The quantizations we construct have presentations as quantum-CGL extensions, the definition of which is given in $\S$\ref{subsec-qua-CGL}. The quantization procedure is summarized in $\S$\ref{subsec-pro}, while details are given in $\S$\ref{subsec-qua}. Quantum-CGL extensions are certain $L$-forms of CGL extensions over the fraction field $\mathbb{K} = \mathbf{k}(q)$ of $L$. CGL extensions over arbitrary fields, first introduced and studied in \cite{LLR}, form a special class of noncommutative unique factorization domains. See $\S$\ref{sec-CGL} for more details. One may apply to CGL extensions the method of deleting derivations by G. Cauchon \cite{Cauchon} and the stratification theory of torus-invariant prime ideals by K. Goodearl and E. Letzter \cite{DM-q}. Recently, quantum cluster algebra structures on CGL extensions have been studied extensively by K. Goodearl and M. Yakimov \cite{QQ} \cite{QO} \cite{QQ2}. Quantum Schubert cell algebras, introduced by C. De Concini, V. Kac, C. Procesi \cite{DKP} and G. Lusztig \cite{Lusztig}, have presentations as CGL extensions \cite{QQ} \cite[Example 4.5]{QO} \cite[$\S$9.2]{QQ2}. In \cite[$\S$4]{DB}, quantized coordinate rings of double Bruit cells are shown to have presentations as CGL extensions as well. See \cite[Corollary 3.8]{LLR} for more examples.

\subsection{Main results}\label{subsec-mai}

Given a Poisson algebra $(A, \{ \ , \ \})$ over $\mathbf{k}$, recall from \cite{DM-p} \cite{Oh} that a Poisson-Ore extension of $A$ is the algebra $A[x]$ together with a Poisson bracket $\{ \ , \ \}$ which extends the Poisson bracket on $A$ and satisfies
\begin{equation}\label{eq-Ax}
\{x, \ A\} \subset Ax + A.
\end{equation}
Given such an extension, the two $\mathbf{k}$-linear maps $\theta$ and $\delta$ on $A$ given by
\begin{equation}\label{eq-ax}
\{x, \ a\} = \theta(a)x + \delta(a), \ \ a \in A,
\end{equation}
are necessarily derivations of $A$, and they have the additional properties that
$\theta$ is a Poisson derivation of $A$ and $\delta$ is a Poisson $\theta$-derivation of $A$, i.e., for $a_1, a_2 \in A$,
\begin{align}\label{eq-theta-delta-1}
\theta(\{a_1, \ a_2\}) &= \{\theta(a_1), \ a_2\} + \{a_1, \, \theta(a_2)\},\\
\label{eq-theta-delta-2}
\delta(\{a_1, \ a_2\}) &= \{\delta(a_1), \ a_2\} + \{a_1, \, \delta (a_2)\} + \theta(a_1) \delta(a_2) - \delta(a_1) \theta(a_2).
\end{align}
Conversely (see \cite{DM-p} \cite{Oh}), given derivations $\theta, \delta$ of $A$ satisfying (\ref{eq-theta-delta-1}) and (\ref{eq-theta-delta-2}),
the Poisson bracket on $A$ extends, via (\ref{eq-ax}) and the Leibniz rule, to a unique Poisson bracket on $A[x]$ satisfying (\ref{eq-Ax}). The polynomial algebra $A[x]$ with the Poisson structure so defined using $\theta$ and $\delta$ is denoted by $A[x; \theta, \delta]$.

In this paper, we quantize Poisson polynomial algebras that have presentations as symmetric integral Poisson-CGL extensions (see Definition \ref{T-Poi-CGL}, Definition \ref{integral}, and Definition \ref{P-sym}), which can be defined as follows:

\begin{definition}
\label{sym-int-Poi-CGL}
{\rm
A symmetric integral Poisson-CGL extension is an iterated Poisson-Ore extension
\[ A=\mathbf{k}[x_1;\theta_1,\delta_1] \cdots [x_n;\theta_n,\delta_n], \]
together with an action of a split $\mathbf{k}$-torus $\mathbb{T}$ by Poisson automorphisms, satisfying

1) $\delta_j(x_i) \in \mathbf{k}[x_{i+1},...,x_{j-1}]$ for $1 \leq i < j \leq n$;

2) $x_1,...,x_n$ are $\mathbb{T}$-weight vectors with weights $\lambda_1,...,\lambda_n$ respectively;

3) there exist $h_1,...,h_n,h'_1,...,h'_n$ in the co-character lattice of $\mathbb{T}$ such that

\indent \indent a) $\theta_j(x_i)=\lambda_i(h_j)x_i=-\lambda_j(h'_i)x_i$ for $1 \leq i < j \leq n$,

\indent \indent b) $\lambda_j(h_j) \neq 0$ and $\lambda_j(h'_j) \neq 0$ for $j \in [1,n]$.\\
As the $\mathbb{T}$-action on $A$ is determined by $\lambda_1, \ldots, \lambda_n$, we also denote the Poisson algebra $A$ by
\[
A = \mathbf{k}[x_1;\theta_1,\delta_1]\cdots
[x_n;\theta_n,\delta_n]_{(\lambda_1, \ldots,
\lambda_n)}.
\]
\hfill $\diamond$
}
\end{definition}

Recall that an Ore extension of a ring $R$ is the ring $R[x]$ generated by $R$ and $x$ that satisfies
\[ xr=\sigma(r)x+\Delta(r), \ \ r \in R, \]
where $\sigma$ is a ring endomorphism on $R$ and $\Delta$ is a $\sigma$-derivation, i.e.,
\[ \Delta(r_1r_2)=\sigma(r_1)\Delta(r_2)+\Delta(r_1)r_2, \ \ r_1,r_2 \in R. \]
The ring $R[x]$ is denoted as $R[x;\sigma,\Delta]$. A quantum-Ore extension of a quantum $L$-algebra $\mathcal{A}$ (see Definition \ref{qua-alg}) is an Ore extension $\mathcal{A}[X;\sigma,\Delta]$ of $\mathcal{A}$ such that $\sigma,\Delta$ are $L$-linear maps and $\mathcal{A}[X;\sigma,\Delta]$ is a quantum $L$-algebra.

The quantum $L$-algebras that we construct have presentations as symmetric quantum-CGL extensions (see Definition \ref{Q-CGL} and Definition \ref{Q-sym}), which can be defined as follows:

\begin{definition}
\label{sym-qua-CGL}
{\rm
A symmetric quantum-CGL extension is an iterated quantum-Ore extension
\[ \mathcal{A}=L[X_1;\sigma_1,\Delta_1]\cdots [X_n;\sigma_n,\Delta_n], \]
together with an action of a split $\mathbf{k}$-torus $\mathbb{T}$ by $\mathbf{k}$-algebra automorphisms, satisfying 

1) $\Delta_j(X_i)$ lies in the subalgebra of $\mathcal{A}$ generated by $X_{i+1},...,X_{j-1}$, for $1 \leq i < j \leq n$;

2) $X_1,...,X_n$ are $\mathbb{T}$-weight vectors with weights $\lambda_1,...,\lambda_n$ respectively;

3) there exist $h_1,...,h_n,h'_1,...,h'_n$ in the co-character lattice of $\mathbb{T}$ such that

\indent \indent a) $\sigma_j(X_i)=q^{\lambda_i(h_j)}X_i=q^{-\lambda_j(h'_i)}X_i$ for $1 \leq i < j \leq n$,

\indent \indent b) $\lambda_j(h_j) \neq 0$ and $\lambda_j(h'_j) \neq 0$ for $j \in [1,n]$.\\
As the $\mathbb{T}$-action on $\mathcal{A}$ is determined by $\lambda_1, \ldots, \lambda_n$, we also denote the quantum $L$-algebra $\mathcal{A}$ by
\[ \mathcal{A}=L[X_1;\sigma_1,\Delta_1]\cdots [X_n;\sigma_n,\Delta_n]_{(\lambda_1, \ldots, \lambda_n)}. \]
\hfill $\diamond$
}
\end{definition}

The following is the main theorem of this paper.

\begin{theorem}
\label{quantization}
Every symmetric integral Poisson-CGL extension has an $L$-quantization into a symmetric quantum-CGL extension.
\end{theorem}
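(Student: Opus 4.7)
The plan is to induct on the number $n$ of variables, constructing the quantization one Ore extension at a time; the base case $n = 0$ is trivial with $\mathcal{A} = L$. For the inductive step, assume we have a symmetric quantum-CGL extension $\mathcal{B} = L[X_1;\sigma_1,\Delta_1]\cdots[X_{n-1};\sigma_{n-1},\Delta_{n-1}]_{(\lambda_1,\ldots,\lambda_{n-1})}$ that $L$-quantizes $B = \mathbf{k}[x_1;\theta_1,\delta_1]\cdots[x_{n-1};\theta_{n-1},\delta_{n-1}]_{(\lambda_1,\ldots,\lambda_{n-1})}$ compatibly with the $\mathbb{T}$-action and the cocharacters $h_j, h'_j$ for $j < n$. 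The task is to define $\sigma_n$ and $\Delta_n$ on $\mathcal{B}$ so that $\mathcal{A} := \mathcal{B}[X_n;\sigma_n,\Delta_n]$ is a symmetric quantum-CGL extension $L$-quantizing $A$.

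The construction of $\sigma_n$ is straightforward: by integrality $\lambda_i(h_n) \in \mathbb{Z}$, so $q^{\lambda_i(h_n)} \in L$, and the rule $\sigma_n(X_i) := q^{\lambda_i(h_n)} X_i$ respects the $\mathbb{T}$-equivariant defining relations of $\mathcal{B}$ and therefore extends uniquely to an $L$-algebra automorphism (equivalently, $\sigma_n$ is the value at $q$ of the one-parameter subgroup $h_n \colon \mathbf{k}^\times \to \mathbb{T}$ acting on $\mathcal{B}$). The construction of $\Delta_n$ requires more care. Since $\mathcal{B}/(q-1)\mathcal{B} \cong B$ as Poisson $\mathbf{k}$-algebras, each $\delta_n(x_i) \in \mathbf{k}[x_{i+1},\ldots,x_{n-1}]$ lifts to a $\mathbb{T}$-weight element $D_i$ of weight $\lambda_i + \lambda_n$ in the $L$-subalgebra of $\mathcal{B}$ generated by $X_{i+1},\ldots,X_{n-1}$. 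The natural ansatz is $\Delta_n(X_i) := (q-1)D_i$, extended to all of $\mathcal{B}$ by the twisted Leibniz rule $\Delta_n(ab) = \sigma_n(a)\Delta_n(b) + \Delta_n(a)b$. The factor $(q-1)$ is exactly what is needed so that the resulting relation $X_n X_i = q^{\lambda_i(h_n)}X_iX_n + (q-1)D_i$ reduces modulo $(q-1)$ to the Poisson relation $\{x_n, x_i\} = \theta_n(x_i)x_n + \delta_n(x_i)$.

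The main obstacle, and the heart of the proof, is to show that the lifts $D_i$ can be chosen consistently so that this prescription yields a well-defined $\sigma_n$-derivation of $\mathcal{B}$. Applying $\Delta_n$ to each defining relation $X_jX_i = q^{\lambda_i(h_j)}X_iX_j + \Delta_j(X_i)$ of $\mathcal{B}$ and demanding consistency yields, for $i < j < n$, the condition
\[
\sigma_n(X_j)\Delta_n(X_i) + \Delta_n(X_j)X_i = q^{\lambda_i(h_j)}\bigl(\sigma_n(X_i)\Delta_n(X_j) + \Delta_n(X_i)X_j\bigr) + \Delta_n(\Delta_j(X_i)).
\]
At the semiclassical level, this condition is controlled by the Poisson $\theta_n$-derivation identity (\ref{eq-theta-delta-2}) applied to the pair $(x_j, x_i)$, which holds in $B$ by hypothesis; hence the quantum obstruction lies in $(q-1)\mathcal{B}$. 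My plan is to argue by a secondary induction on $j-i$ that the $D_i$ can be successively adjusted by $(q-1)$-multiples of $\mathbb{T}$-weight vectors of the correct weight in the subalgebra generated by $X_{i+1},\ldots,X_{n-1}$, thereby killing each obstruction in turn without disturbing the semiclassical limit; the triangularity of the $\Delta_j$'s (from the inductive hypothesis) together with the corresponding triangularity of $\delta_n$ confine this adjustment problem to the appropriate subalgebras at each stage. Once $\Delta_n$ is constructed, extending the $\mathbb{T}$-action to $\mathcal{A}$ by giving $X_n$ weight $\lambda_n$ and invoking the prescribed cocharacters $h_n, h'_n$ verifies axiom (3) of Definition \ref{sym-qua-CGL}, and the semiclassical isomorphism $\mathcal{A}/(q-1)\mathcal{A} \cong A$ follows directly from the form $\Delta_n(X_i) = (q-1)D_i$.
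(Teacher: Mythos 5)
Your overall framework—proceed by adding one Ore variable at a time, fix $\sigma_n$ by integrality, and set $\Delta_n(X_i) = (q-1)D_i$ for $\mathbb{T}$-homogeneous lifts $D_i$ of $\delta_n(x_i)$—is the right shape, and you correctly identify the $(q-1)$ normalization and the role of identity \eqref{eq-theta-delta-2} in controlling the leading order of the Ore-compatibility condition. But the heart of the argument is precisely the step you leave as a ``plan'': showing that the lifts $D_i$ can be chosen so that $\Delta_n$ extends to a well-defined $\sigma_n$-derivation of $\mathcal{B}$. Observing that the obstruction lies in $(q-1)\mathcal{B}$ is a necessary condition, not a sufficient one. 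A genuine deformation-theoretic obstruction could live in higher powers of $(q-1)$, and ``successively adjusting the $D_i$ by $(q-1)$-multiples of weight vectors'' is exactly the kind of recursive correction that can run into a nonvanishing obstruction class; nothing in your sketch explains why it terminates or why the correction at each stage stays inside the prescribed subalgebra with the prescribed weight. This is where the proof actually has to do work, and the proposal has no content there.

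The paper avoids this obstruction problem entirely by not trying to lift the $\delta_n(x_i)$ one at a time. Instead it embeds the already-constructed quantization $\mathcal{A}$ into a quantum torus $\Gamma_q$ (generated by the sequence of homogeneous prime elements $Y_j$), writes down a single explicit element $D \in \Gamma_q$ by formula \eqref{Dis} in terms of the monomials $B_{p^i(k)}$ and the quantum distinguished elements $D^{(i)}$, and defines $\Delta$ as the twisted inner derivation $\Delta(a) = D \ast a - \sigma(a) \ast D$ on $\Gamma_q$. Being inner, $\Delta$ is automatically a $\sigma$-derivation with no consistency condition to check; the nontrivial content is then the containment $\Delta(\mathcal{A}) \subset \mathcal{A}$, proved in Lemma \ref{lie in} and Theorem \ref{one-ste-qua} using primality of the $Y_j$ in $\mathcal{A}$ (Lemma \ref{prime}) and the degree/level-set bookkeeping of Lemmas \ref{d-M}, \ref{qd-M}. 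Two further points you miss: first, the paper's induction runs in the opposite direction (it starts from $L[X_n]$ and adjoins $X_{n-1}, \ldots, X_1$), because the distinguished-element machinery and Lemma-Notation \ref{mon}/\ref{qmon} are set up for adjoining the lowest-indexed variable, exploiting the symmetry hypothesis; second, the inductive hypothesis needed is strictly stronger than ``$\mathcal{B}$ is an $L$-quantization''—it is that $\mathcal{B}$ is a \emph{preferred} quantization (Definition \ref{per-qua}), meaning $\mathcal{B}$ and $B$ share level sets and the prime elements $Y_j$ lie in $\mathcal{B}$; without this, there is no quantum torus to embed into and no handle on the structure of $D$. To repair your proposal you would either have to develop an obstruction-vanishing argument from scratch, or reorganize around an explicit $D$ as the paper does.
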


Moreover, the quantization we construct is "preferred" (see Definition \ref{per-qua}) and preferred quantization is unique in the sense of Theorem \ref{unique}. We have an explicit procedure for this quantization, which is discussed in $\S$\ref{subsec-pro}.

\subsection{Quantization procedure}\label{subsec-pro}

In this section, we briefly describe the quantization procedure. See $\S$\ref{subsec-qua} for more details. Let
\[ A = \mathbf{k}[x_1;\theta_1,\delta_1]\cdots [x_n;\theta_n,\delta_n]_{(\lambda_1, \ldots, \lambda_n)} \]
be a symmetric integral Poisson-CGL extension (see Definition \ref{sym-int-Poi-CGL}).
For $1 \leq i < j \leq n$, let $\lambda_{i,j} \in \mathbf{k}$ be
such that $\theta_j(x_i)=\lambda_{i,j}x_i$. By 3) of Definition \ref{sym-int-Poi-CGL}, $\lambda_{i,j} \in \mathbb{Z}$.

We start with the polynomial algebra $L[X_n]$, which is clearly an $L$-quantization of the Poisson subalgebra $\mathbf{k}[x_n]$ of $A$,
and we proceed by induction. Assume that $n \geq 2$, and assume that for some $k \in [1,n-1]$
we have constructed an $L$-quantization $\mathcal{A}_{[k+1,n]}$ of $\mathbf{k}[x_{k+1},...,x_n]$. Next we construct a quantum-Ore extension $\mathcal{A}_{[k+1,n]}[X_k;\sigma_{[k]},\Delta_{[k]}]$ of $\mathcal{A}_{[k+1,n]}$. The map $\sigma_{[k]}$ is given by
\[ \sigma_{[k]}(X_j)=q^{-\lambda_{k,j}}X_j, \ \ j \in [k+1,n]. \]
Our procedure guarantees that $\mathcal{A}_{[k+1,n]}$ can be embedded into a quantum $L$-torus $\Gamma_{q,[k]}$, in which we seek for an element $D_{[k]}$ such that the map
\begin{equation}
\label{Deltak}
\Delta_{[k]}: \;\; \Gamma_{q,[k]} \longrightarrow \Gamma_{q,[k]}, \;\;
\Delta_{[k]}(a)=D_{[k]} \ast a-\sigma_{[k]}(a) \ast D_{[k]}, \;\; a \in \Gamma_{q,[k]},
\end{equation}
has the property that
\[
\Delta_{[k]}(\mathcal{A}_{[k+1,n]}) \subset \mathcal{A}_{[k+1,n]}.
\]
Once such an element $D_{[k]}$ is constructed, we define $\mathcal{A}_{[k,n]}$
to be the quantum-Ore extension $\mathcal{A}_{[k+1,n]}[X_k;\sigma_{[k]},\Delta_{[k]}]$ of $\mathcal{A}_{[k+1,n]}$. By induction, we
obtain an $L$-quantization $\mathcal{A}:=\mathcal{A}_{[1,n]}$ of $A$. This quantum $L$-algebra $\mathcal{A}$ has a presentation as a symmetric quantum-CGL extension
\[ \mathcal{A}=L[X_1;\sigma_1,\Delta_1]\cdots [X_n;\sigma_n,\Delta_n]_{(\lambda_1, \ldots, \lambda_n)}. \]
The quantum $L$-algebra $\mathcal{A}$ can also be viewed as the $L$-algebra generated by $n$ variables $X_1,...,X_n$ with the commutating relations:
\[
X_i \ast X_j=q^{-\lambda_{i,j}}X_j \ast X_i+\Delta_{i,j}, \ \ 1 \leq i < j \leq n,
\]
where $\Delta_{i,j}=\Delta_{[i]}(X_j)$.

\subsection{Notation}\label{subsec-not}

Let $\mathbf{k}$ be a field. Let $\mathbb{T}$ be a split $\mathbf{k}$-torus with the Lie algebra $\mathfrak{t}$. Denote by $\chi(\mathbb{T})$ and $\chi_{\ast}(\mathbb{T})$ the character lattice and the co-character lattice of $\mathbb{T}$ respectively. We identify both $\chi(\mathbb{T})$ and $\chi_{\ast}(\mathbb{T})$ with $\mathbb{Z}^r$ through the isomorphism $\mathbb{T} \cong (\mathbf{k}^{\times})^{r}$. We also identify the Lie algebra $\mathfrak{t}$ of $\mathbb{T}$ with $\chi(\mathbb{T}) \otimes_{\mathbb{Z}} \mathbf{k}$ and the dual space $\mathfrak{t}^*$ of $\mathfrak{t}$ with
$\chi_{\ast}(\mathbb{T}) \otimes_{\mathbb{Z}} \mathbf{k}$. An element $h \in \chi_{\ast}(\mathbb{T})$ can then be regarded both as a $\mathbf{k}$-group morphism $\mathbf{k}^\times \to \mathbb{T}$ and an element in $\mathfrak{t}$. Similarly, an element $\lambda \in \chi(\mathbb{T})$ can be regarded as a group homomorphism $\lambda: \mathbb{T} \to \mathbf{k}^\times, t \mapsto t^\lambda$ for $t \in \mathbb{T}$, as well as an element $\lambda \in \mathfrak{t}^*: x \mapsto \lambda(x)$ for $x \in \mathfrak{t}$.

We define a total order on $\mathbb{Z}^r$. For $(z_1,...,z_r)$ and $(z'_1,...,z'_r)$ in $\mathbb{Z}^r$, we say $(z_1,...,z_r) \leq (z'_1,...,z'_r)$ if one of the following conditions is satisfied:

1) $z_i=z'_i$, $i \in [1,r]$;

2) there exists $k \in [1,r]$ such that $z_k<z'_k$ and $z_j=z'_j, \ j \in [k+1,r].$\\
Through the identification between $\chi(\mathbb{T})$ and $\mathbb{Z}^r$, this total order induces a total order on $\chi(\mathbb{T})$, still denoted by $\leq$.

Let $A=\mathbf{k}[x_1,...,x_n]$ be a polynomial algebra over $\mathbf{k}$. As a $\mathbf{k}$-vector space, $A$ has a basis $\{x^{k_1}_1 \cdots x^{k_n}_n : (k_1,...,k_n) \in \mathbb{N}^n\}$. Define the degree of $x^{k_1}_1 \cdots x^{k_n}_n$ to be $f=(k_1,...,k_{n}) \in \mathbb{N}^n$. Equip $\mathbb{N}^n \subset \mathbb{Z}^n $ with the total order $\leq$ introduced above. For $a \in A$, define the degree of $a$ to be the degree of its highest degree term.

Let $S=R[X_1;\sigma_1,\Delta_1] \cdots [X_n;\sigma_n,\Delta_n]$ be an iterated Ore extension of a ring $R$. As an $R$-vector space,
$S$ has a basis $\{X^{k_1}_1 \cdots X^{k_n}_n : (k_1,...,k_n) \in \mathbb{N}^n\}$. Define the degree of $X^{k_1}_1 \cdots X^{k_n}_n$ to be $f=(k_1,...,k_{n}) \in \mathbb{N}^n$. Equip $\mathbb{N}^n \subset \mathbb{Z}^n $ with the total order $\leq$ introduced above. For $s \in S$, define the degree of $s$ to be the degree of its highest degree term.

\section{Poisson-CGL extensions}\label{sec-Poi-CGL}

Fix a field $\mathbf{k}$ of characteristic $0$ and let $\mathbb{T}$ be a split $\mathbf{k}$-torus. We follow the notation in $\S$\ref{subsec-not}.

\subsection{Preliminaries}

Let $A$ be a Poisson $\mathbf{k}$-algebra. Recall that $a \in A$ is said to be Poisson if the principal ideal $aA$ is a Poisson ideal.

\begin{definition}
{\rm
\label{ham-def}
For a Poisson element $a \in A$, the log-Hamiltonian derivation $H_{\log (a)}$ on $A$ is a derivation on $A$ given by
\[
H_{\log(a)}(b) = \frac{1}{a} \{a, \ b\}, \ \ b \in A.
\]
\hfill $\diamond$
}
\end{definition}

Recall (see \cite[$\S$1.4]{DM-p} and \cite[$\S$2]{BG}) that a $\mathbb{T}$-action on $A$ by $\mathbf{k}$-algebra automorphisms is said to be rational if $A$, as a $\mathbf{k}$-vector space, is generated by $\mathbb{T}$-weight vectors with weights in $\chi(\mathbb{T})$. We say $A$ is a $\mathbb{T}$-Poisson algebra if it is equipped with a rational $\mathbb{T}$-action by Poisson algebra automorphisms. An element $a \in A$ is said to be $\mathbb{T}$-homogeneous, or simply homogeneous, if it is a weight vector of the $\mathbb{T}$-action. The $\mathbb{T}$-action on $A$ induces an action of the Lie algebra $\mathfrak{t}$ on $A$ by Poisson derivations (see \cite[$\S$1.4]{DM-p} and \cite[$\S$2]{BG}). In such case, the Poisson derivation on $A$ corresponding to $h \in \mathfrak{t}$ is denoted by $\partial h$.

\begin{lemma}
\label{prime factor}
Let $A$ be a $\mathbb{T}$-Poisson algebra and a UFD. Let $a_1 \in A$ be a prime factor of $a$. Then

1) if $a$ is homogeneous, so is $a_1$;

2) if $a$ is Poisson, so is $a_1$.
\end{lemma}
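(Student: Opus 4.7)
For part (1), I plan to exploit the total order on the character lattice $\chi(\mathbb{T}) \cong \mathbb{Z}^r$ introduced in $\S$\ref{subsec-not}. Write $a = a_1 b$ for some $b \in A$. Rationality of the $\mathbb{T}$-action decomposes both factors into finite sums of weight vectors, $a_1 = \sum_{\mu \in S}(a_1)_\mu$ and $b = \sum_{\nu \in T} b_\nu$, with $S$ and $T$ finite subsets of $\chi(\mathbb{T})$. Let $\mu_+$, $\mu_-$ be the maximum and minimum weights in $S$, and define $\nu_+$, $\nu_-$ analogously for $T$.

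Because $\mathbb{T}$ acts by algebra automorphisms, the product of weight vectors of weights $\mu$ and $\nu$ is a weight vector of weight $\mu + \nu$. In the expansion $a_1 b = \sum_{\mu,\nu}(a_1)_\mu b_\nu$, the only contribution to the weight $\mu_+ + \nu_+$ component comes from $(a_1)_{\mu_+} b_{\nu_+}$, since any $\mu \in S$ with $\mu < \mu_+$ would force the paired $\nu$ to satisfy $\nu > \nu_+$, which is excluded. As $A$ is a domain, this contribution is nonzero, and since $a$ is homogeneous of some weight $\lambda$, one concludes $\mu_+ + \nu_+ = \lambda$. The symmetric argument at the minima yields $\mu_- + \nu_- = \lambda$. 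Combined with $\mu_- \leq \mu_+$ and $\nu_- \leq \nu_+$, these equalities force $\mu_- = \mu_+$, so $S$ is a singleton and $a_1$ is $\mathbb{T}$-homogeneous.

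For part (2), I plan to use unique factorization together with the Leibniz rule, leveraging $\operatorname{char}(\mathbf{k}) = 0$. Let $m \geq 1$ be the multiplicity of $a_1$ in $a$, and write $a = a_1^m s$ with $a_1 \nmid s$. For any $b \in A$, the hypothesis that $aA$ is a Poisson ideal gives $\{a,b\} \in a_1^m s A$, while the Leibniz rule yields
\[
\{a,b\} = m\, a_1^{m-1}\, s\, \{a_1, b\} + a_1^m \{s,b\}.
\]
Equating the two expressions and dividing through by $a_1^{m-1}$ shows that $a_1$ divides $m\, s\, \{a_1, b\}$ in $A$. Since $\operatorname{char}(\mathbf{k}) = 0$ the scalar $m$ is a unit, and since $a_1$ is prime with $a_1 \nmid s$, we conclude $\{a_1, b\} \in a_1 A$. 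As $b \in A$ is arbitrary, $a_1 A$ is a Poisson ideal.

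Neither part presents a genuine obstacle. The only subtleties worth flagging are the nonvanishing of $(a_1)_{\mu_+} b_{\nu_+}$ in part (1), which is supplied by $A$ being a domain as a UFD, and the invertibility of $m$ in part (2), which is immediate from the standing characteristic-zero hypothesis on $\mathbf{k}$.
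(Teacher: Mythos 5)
Your proof is correct and follows essentially the same route as the paper's: part (1) compares the maximal and minimal weight components of a factorization of $a$, using compatibility of the total order on $\chi(\mathbb{T})$ with addition and the domain property to force the weight sets to collapse to singletons; part (2) applies the Leibniz rule to the factorization $a = a_1^m s$ and invokes primality of $a_1$ together with characteristic zero. The only cosmetic difference is that you factor $a = a_1 b$ whereas the paper writes $a = a_1^k a_2$ and treats $a_1^{k-1}a_2$ as the cofactor, which is the same argument.
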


\begin{proof}
1) Since the action of $\mathbb{T}$ on $A$ is rational, every nonzero element of $A$ is a finite sum of homogeneous elements with distinct weights. With the total order $\leq$ introduced in Section \ref{subsec-not}, for each nonzero $f \in A$, let $\lambda_{f, {\rm min}}$ and $\lambda_{f, {\rm max}}$ be the respective minimal and maximal weights appearing in the decomposition of $f$ into the sum of homogeneous elements. It is straightforward to check that the total order $\leq$ satisfies
\[
\lambda_1 \leq \lambda_2 \;\; \mbox{and} \;\; \lambda_3 \leq \lambda_4\;\; \Longrightarrow\;\;
\lambda_1 + \lambda_3 \leq \lambda_2 + \lambda_4
\]
for any $\lambda_j \in \chi(T)$, $j = 1, 2, 3, 4$. Write $a=a_1^ka_2$, where $k$ is a positive integer and $a_1$ does not divide $a_2$. It follows from
\[
\lambda_{a_1, {\rm min}} \lambda_{a^{k-1}_1a_2, {\rm min}} = \lambda_{a, {\rm min}} = \lambda_{a, {\rm max}} =
\lambda_{a_1, {\rm max}} \lambda_{a^{k-1}_1a_2, {\rm max}}
\]
that both $a_1$ and $a^{k-1}_1a_2$ are homogeneous.

2) Still write $a=a_1^ka_2$, where $k$ is a positive integer and $a_1$ does not divide $a_2$. For any $a' \in A$,
\[ \{a, \ a'\}=\{a_1^ka_2, \ a'\}=a^k_1\{a_2, \ a'\}+ka^{k-1}_1a_2\{a_1, \ a'\}. \]
Because $a$ is Poisson, $\{a, \ a'\} \in aA$. Hence, $\{a_1,a'\} \in a_1A$ for any $a' \in A$, which means $a_1$ is Poisson.
\end{proof}

\subsection{Poisson-CGL extensions}\label{subsec-Poi-CGL}

\begin{definition}
\cite[Definition 4.4]{PC}
\label{T-P-O}
{\rm
Given a Poisson $\mathbf{k}$-algebra $A$, a Poisson-Cauchon extension of $A$ is a Poisson-Ore extension $A[x;\theta,\delta]$ of $A$, together with a rational $\mathbb{T}$-action by Poisson algebra automorphisms, satisfying

1) $A \subset A[x]$ is $\mathbb{T}$-stable and $x \in A[x]$ is homogeneous with weight $\lambda_0 \in \chi(\mathbb{T})$;

2) $\delta$ is locally nilpotent;

3) there exists $h_0 \in \mathfrak{t}$ such that $\theta = \partial h_0|_{A}$ and $\eta:=\lambda_0(h_0) \neq 0$.\\
We denote this Poisson-Cauchon extension of $A$ by $A[x;\theta,\delta]_{\lambda_0}$.
\hfill $\diamond$
}
\end{definition}

\begin{remark}
\label{com-h}
{\rm
The $\mathbb{T}$-action on $A[x;\theta,\delta]$ is determined by the $\mathbb{T}$-action on $A$ and the weight $\lambda_0$ of $x$. On the other hand, the $\mathbb{T}$-action on $A[x;\theta,\delta]$ with Condition 1) is by Poisson algebra automorphisms if and only if
\[
t \cdot \{x, a\} = \{t\cdot x, \ t\cdot a\}, \ \ a \in A, \ t \in \mathbb{T},
\]
which is equivalent to
\begin{equation}\label{eq-ta}
t \cdot \delta(a) = t^{\lambda_0}\delta(t \cdot a), \ \ a \in A, \ t \in \mathbb{T}.
\end{equation}
As $\mathbb{T}$ is connected, \eqref{eq-ta} is, in turn, equivalent to
\begin{equation}\label{eq-ha}
[\partial h|_{A}, \ \delta] = \lambda_0(h)\delta, \ \ h \in \mathfrak{t}.
\end{equation}
In particular, one has
\[ [\theta, \ \delta]=\eta \delta. \]
\hfill $\diamond$
}
\end{remark}

Let $A$ be a Poisson $\mathbf{k}$-algebra and an integral domain. Suppose $A[x;\theta,\delta]_{\lambda_0}$ is a Poisson-Cauchon extension of $A$. Clearly, $A[x;\theta,\delta]_{\lambda_0}$ is still an integral domain. The following concept of "distinguished" is essential for the quantizations constructed in this paper.

\begin{definition}
\label{dis-def}
{\rm
Let $F$ be the fraction field of $A$ with the $\mathbb{T}$-action by $\mathbf{k}$-algebra automorphisms that extends the $\mathbb{T}$-action on $A$. An element $d \in F$ is said to be distinguished with respect to $A[x;\theta,\delta]_{\lambda_0}$ if it is homogeneous with weight $\lambda_0$ and satisfies
\begin{equation}\label{dis1}
\{d, \ r\}=\theta(r)d+\delta(r), \ \ r \in F.
\end{equation}
\hfill $\diamond$
}
\end{definition}

\begin{remark}
\label{ham-1}
{\rm
The log-Hamiltonian derivation $H_{\log (x-d)}$ of $x-d$ on $F$ (see Definition \ref{ham-def}) satisfies $H_{\log (x-d)}(A) \subset A$ and $H_{\log (x-d)}|_A=\theta$, i.e.,
\[ \{x-d, \ a \}=\theta(a)(x-d), \ \ a \in A. \]
\hfill $\diamond$
}
\end{remark}

\begin{lemma}
\label{dis-uni}
There is at most one distinguished element $d$ with respect to $A[x;\theta,\delta]_{\lambda_0}$ and it satisfies
\[ \delta(d)=-\eta d^2, \]
where $\eta$ is defined as in 2) of Definition \ref{T-P-O}.
\end{lemma}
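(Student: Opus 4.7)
The plan is to extract both assertions of the lemma by specializing the defining identity (\ref{dis1}) at well-chosen elements $r \in F$. Before doing so, I would record two standing facts: (i) the derivations $\theta$ and $\delta$ of $A$ extend uniquely to derivations of the fraction field $F$ via the quotient rule, so the identity (\ref{dis1}) makes sense for all $r \in F$; and (ii) the rational $\mathbb{T}$-action on $A$ extends uniquely to a rational $\mathbb{T}$-action on $F$, under which $\theta = \partial h_0|_F$, since both sides are derivations of $F$ that coincide on the generating subring $A$. In particular, any $f \in F$ that is homogeneous of weight $\lambda_0$ satisfies $\theta(f) = \partial h_0(f) = \lambda_0(h_0)\,f = \eta\, f$.

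For uniqueness, suppose $d_1, d_2 \in F$ are both distinguished and set $e = d_1 - d_2$. Subtracting the two instances of (\ref{dis1}) makes the $\delta(r)$ terms cancel, leaving
\[ \{e,\,r\} = \theta(r)\,e, \qquad r \in F. \]
The element $e$ is homogeneous of weight $\lambda_0$ as a difference of two such elements, so by the observation above $\theta(e) = \eta\, e$. Specializing $r = e$ in the displayed relation and using $\{e,e\}=0$ yields $\eta\, e^2 = 0$ in the field $F$; since $\eta \neq 0$, this forces $e = 0$.

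For the formula $\delta(d) = -\eta d^2$, I would simply set $r = d$ in (\ref{dis1}). Since $\{d,d\}=0$, this gives $\delta(d) = -\theta(d)\,d$, and the homogeneity observation supplies $\theta(d) = \eta\, d$, so $\delta(d) = -\eta\, d^2$.

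I do not anticipate any serious obstacle: the whole argument is driven by substituting $r \in \{e, d\}$ into (\ref{dis1}), and the only mildly technical point is the clean extension of $\theta$, $\delta$, and the $\mathbb{T}$-action from $A$ to $F$, together with the compatibility $\theta|_F = \partial h_0|_F$, which is what converts ``homogeneous of weight $\lambda_0$'' into the concrete eigenvalue equation $\theta(\cdot) = \eta\, (\cdot)$ that drives both parts.
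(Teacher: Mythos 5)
Your proof is correct and follows essentially the same route as the paper: subtract the two instances of (\ref{dis1}) to get $\{e,r\}=\theta(r)e$, specialize $r=e$ and use the eigenvalue identity $\theta(e)=\eta e$ coming from homogeneity to force $e=0$; then set $r=d$ in (\ref{dis1}) and use $\theta(d)=\eta d$ to read off $\delta(d)=-\eta d^2$. The preliminary remarks about extending $\theta$, $\delta$, and the $\mathbb{T}$-action to $F$ are a slightly more explicit justification of steps the paper takes for granted, but the argument is the same.
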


\begin{proof}
Assume $d' \in F$ is also a distinguished element with respect to $A[x;\theta,\delta]_{\lambda_0}$. Let $e=d-d'$. Then $e$ is homogeneous with weight $\lambda_0$ and
\[ \{e, \ r\}=\theta(r)e, \ \ r \in F. \]
Since $0=\{e, \ e\}=\theta(e)e=\eta e^2$ and $\eta \neq 0$, $e=0$, which means $d=d'$. Therefore, there is at most one distinguished element $d \in F$ with respect to $A[x;\theta,\delta]_{\lambda_0}$.

It follows from
\[ 0=\{d, \ d\}=\theta(d)d+\delta(d)=\eta d^2+\delta(d) \]
that $\delta(d)=-\eta d^2.$
\end{proof}

We now turn to iterated Poisson-Cauchon extensions: starting from the ground field $\mathbf{k}$ (with the zero Poisson structure), one can form iterated Poisson-Ore extensions to get a Poisson $\mathbf{k}$-algebra
\[
A=\mathbf{k}[x_1;\theta_1,\delta_1]\cdots [x_n;\theta_n,\delta_n],
\]
which is called an iterated Poisson-Ore extension of $\mathbf{k}$, or simply an iterated Poisson-Ore extension. For $j \in [1,n]$, denote by $A_j$ the subalgebra of $A$ generated by $x_1,...,x_j$ and set $A_0 = \mathbf{k}$. The concept of iterated Poisson-Cauchon extensions is introduced by K. Goodearl and M. Yakimov in \cite[Definition 4]{QQ} and \cite[Definition 5.1]{PC}, which we now recall.

\begin{definition}
\label{T-Poi-CGL}
{\rm
A length $n$ Poisson-CGL extension is an iterated Poisson-Ore extension
\[ A=\mathbf{k}[x_1;\theta_1,\delta_1] \cdots [x_n;\theta_n,\delta_n], \]
together with a rational $\mathbb{T}$-action by Poisson algebra automorphisms, such that for $j \in [1, n]$,

1) $x_j$ is homogeneous with weight $\lambda_j \in \chi(\mathbb{T})$;

2) $\delta_j$ is locally nilpotent;

3) there exists $h_j \in \mathfrak{t}$ satisfying $\eta_j:=\lambda_{j}(h_{j}) \neq 0$ and
\[ \theta_{j}=\partial h_j|_{A_{j-1}}. \]
As the $\mathbb{T}$-action on $A$ is determined by $\lambda_1, \ldots, \lambda_n$, we also denote the Poisson algebra $A$ by
\[
A = \mathbf{k}[x_1;\theta_1,\delta_1]\cdots
[x_n;\theta_n,\delta_n]_{(\lambda_1, \ldots,
\lambda_n)}.
\]
\hfill $\diamond$
}
\end{definition}

A Poisson-CGL extension $A$ is thus a sequence
\[
\mathbf{k} = A_0 \subset A_1 \subset \cdots \subset A_n = A
\]
of $\mathbb{T}$-Poisson algebras such that $A_j = A_{j-1}[x_j;\theta_j,\delta_j]_{\lambda_j}$ is a Poisson-Cauchon extension of $A_{j-1}$ for $j \in [1,n]$. It is not difficult to see that, for $j \in [1,n]$, $A_j$ is a Poisson-CGL extension.

One of the most striking features of a Poisson-CGL extension $A$ concerns the homogeneous Poisson prime elements of the nested
sequence of Poisson-CGL extensions
\[
A_1 \subset \cdots \subset A_n = A.
\]
We now state, in Theorem \ref{HPP-Poi-CGL}, some results from \cite{PC}, which are relevant to this paper. For a Poisson-CGL extension
\[ A = \mathbf{k}[x_1;\theta_1,\delta_1]\cdots [x_n;\theta_n,\delta_n]_{(\lambda_1, \ldots, \lambda_n)}, \]
and for $j \in [1,n]$, let $P_j \subset A_j$ be the set of all homogeneous Poisson prime elements of $A_j$, and let $Q_j \subset P_j$ be the set of elements in $P_j$ that are not in $A_{j-1}$. Clearly both $P_j$ and $Q_j$ are invariant under multiplication by scalars in
$\mathbf{k}^\times$. Define $P_0 = Q_0 = \mathbf{k}^\times$.

\begin{theorem}
\label{HPP-Poi-CGL}
Let $A = \mathbf{k}[x_1;\theta_1,\delta_1] \cdots [x_n;\theta_n,\delta_n]_{(\lambda_1, \ldots,
\lambda_n)}$ be a Poisson-CGL extension. For $j \in [1,n]$, let $\eta_j$ be as in 3) of Definition \ref{T-Poi-CGL}. Then for each $j \in [1, n]$, $|Q_j/\mathbf{k}^\times|=1$, and
\[
P_j =Q_j \sqcup \bigsqcup_{i \in I_j} Q_i,
\]
where $I_j =  \{1 \leq i \leq j-1: \ \delta_{i+1}|_{Q_i} = \cdots = \delta_j|_{Q_i} = 0\}$. Moreover, there is a unique sequence $y_A = \{y_1,...,y_n\}$ with $y_j \in Q_j$ for $j \in [1, n]$, which is determined inductively as follows: $y_0 = 1 \in Q_0$, and for $j \geq 1$,
\[
y_j = y_{p(j)}x_j - c_j = y_{p(j)}x_j - \frac{\delta_j(y_{p(j)})}{\eta_j}=y_{p(j)}(x_j-d_j),
\]
where $d_j$ is the distinguished element with respect to $A_{j-1}[x_j;\theta_j,\delta_j]_{\lambda_j}$. Here, $p(j)=0$ when $\delta_j=0$, and when $\delta_j \neq 0$, $p(j)$ is an (necessarily unique) integer in $[1, j-1]$ such that $y_{p(j)} \in P_{j-1}$ and $\delta_j(y_{p(j)}) \neq 0$.
\end{theorem}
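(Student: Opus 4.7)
The plan is to proceed by induction on $j$. The base case $j = 0$ is trivial ($P_0 = Q_0 = \mathbf{k}^\times$, $y_0 = 1$), and for the inductive step I assume the theorem for $A_{j-1}$; in particular the sequence $y_0, \ldots, y_{j-1}$ is in hand, $P_{j-1}$ is described, and $A_{j-1}$ is a UFD (from the iterated-extension structure). For a homogeneous Poisson prime $p \in A_j$ I would split the analysis by its $x_j$-degree $N$, writing $p = \sum_{k=0}^N p_k x_j^k$ with $p_k \in A_{j-1}$ and $p_N \neq 0$.

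In the degree-zero case $p \in A_{j-1}$, primality passes automatically to $A_j = A_{j-1}[x_j]$. The Poisson condition $\{p, x_j\} \in pA_j$ combined with homogeneity (so $\theta_j(p) = \lambda_p(h_j) p$) forces $\delta_j(p) \in pA_{j-1}$. Using the inductive structure of $P_{j-1}$, the weight-shift relation $[\partial h|_{A_{j-1}}, \delta_j] = \lambda_j(h)\delta_j$ from Remark \ref{com-h}, and the normalisation of the $y_i$, this divisibility is shown to collapse to $\delta_j(y_i) = 0$ for each $y_i$; together with the induction this gives $P_j \setminus Q_j = \bigsqcup_{i \in I_j} Q_i$.

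In the degree $N \geq 1$ case, $p \in Q_j$. Expanding $\{a, p\}$ for $a \in A_{j-1}$ via Leibniz with $\{a, x_j\} = -\theta_j(a)x_j - \delta_j(a)$ shows $\deg_{x_j}\{a,p\} \leq N$, hence $\{a, p\} = qp$ for some $q \in A_{j-1}$; comparing $x_j^N$-coefficients yields that $p_N$ is a homogeneous Poisson element of $A_{j-1}$, and Lemma \ref{prime factor} implies it is a product of the $y_i$. A finer analysis, using local nilpotence of $\delta_j$ and the distinct weights of the $y_i$, forces $N = 1$ and identifies $p_1$ up to scalar with a uniquely determined $y_{p(j)}$ for which $\delta_j(y_{p(j)}) \neq 0$; this establishes $|Q_j/\mathbf{k}^\times| = 1$. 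Normalising so that the leading coefficient equals $y_{p(j)}$ produces $y_j = y_{p(j)} x_j - \delta_j(y_{p(j)})/\eta_j$, and setting $d_j := \delta_j(y_{p(j)})/(\eta_j y_{p(j)})$ in the fraction field $F_{j-1}$ of $A_{j-1}$ rewrites this as $y_j = y_{p(j)}(x_j - d_j)$.

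Finally I would verify that $d_j$ is the distinguished element in the sense of Definition \ref{dis-def}. Its weight is $\lambda_j$ because $\delta_j$ shifts weights by $\lambda_j$ and $y_{p(j)}$ is homogeneous. For the bracket identity, using that $y_j$ is Poisson in $A_j \subset F_j$ and that $y_{p(j)}$ is Poisson in $A_{j-1} \subset F_{j-1}$, for $r \in F_{j-1}$ one expands
\[
\{y_j, r\}/y_{p(j)} = \bigl(H_{\log(y_{p(j)})}(r) + \theta_j(r)\bigr) x_j - H_{\log(y_{p(j)})}(r)\, d_j + \delta_j(r) - \{d_j, r\},
\]
a polynomial in $x_j$ of degree at most $1$ over $F_{j-1}$; demanding that $x_j - d_j$ divide it and evaluating at $x_j = d_j$ collapses to $\{d_j, r\} = \theta_j(r) d_j + \delta_j(r)$, as required. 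Uniqueness of $d_j$ is then Lemma \ref{dis-uni}. The main obstacle is the step that forces $N = 1$ in the positive-degree case: this requires coordinating the Poisson, UFD, and locally nilpotent structures of $A_{j-1}$ with the $\mathbb{T}$-weight grading to rule out higher-degree homogeneous Poisson primes, and is the technical heart of the argument inherited from \cite{PC}.
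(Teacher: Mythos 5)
The paper does not actually prove Theorem \ref{HPP-Poi-CGL}: the text immediately preceding it says ``We now state, in Theorem \ref{HPP-Poi-CGL}, some results from \cite{PC}, which are relevant to this paper,'' and no proof is given. There is therefore no in-paper proof to compare against; the theorem is imported wholesale from Goodearl--Yakimov.

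Your proposal sketches a plausible inductive route, and several of the computations you do write out are correct (the Leibniz expansion of $\{y_j, r\}/y_{p(j)}$, and the evaluation at $x_j = d_j$ recovering $\{d_j, r\} = \theta_j(r)d_j + \delta_j(r)$, both check out). But as a \emph{proof} it has genuine, unresolved gaps. The one you flag yourself --- forcing $N = 1$ in the positive-$x_j$-degree case --- is indeed the crux and is nowhere established; you describe what ingredients ought to combine (local nilpotence of $\delta_j$, distinct $\mathbb{T}$-weights, the UFD structure) without giving the argument, and this is not a small omission: it is precisely what makes $|Q_j/\mathbf{k}^\times| = 1$ true. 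A second gap occurs in the degree-zero case: from $p \in A_{j-1}$ homogeneous Poisson prime in $A_j$ you correctly extract $\delta_j(p) \in pA_{j-1}$, but then assert this ``collapses to $\delta_j(y_i) = 0$'' --- that implication needs the standard locally-nilpotent/weight argument (roughly: if $\delta_j(p) = ap$ with $a \neq 0$, iterating $\delta_j$ and tracking the $\mathbb{T}$-weight grading and the relation $[\theta_j,\delta_j] = \eta_j\delta_j$ produces an infinite strictly descending chain, contradicting local nilpotence), and you leave it unstated. Finally, the uniqueness of $p(j)$ among $i$ with $y_i \in P_{j-1}$ and $\delta_j(y_i) \neq 0$ is asserted but not derived. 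Given that the paper simply cites \cite{PC} for this theorem, the honest and economical thing to do here is the same: either cite it, or, if you want a self-contained proof, you must actually supply the $N=1$ argument and the nilpotence/weight argument rather than gesture at them.
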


Note that all the elements in $y_A$ are homogeneous and prime in $A$, but although each $y_j \in y_A$ is a Poisson element of $A_j$, only $y_n$ and those $y_j$ with $j \in I_n$ are Poisson elements of $A$.

\begin{definition}
\label{y_A}
{\rm We refer to the sequence $y_A = \{y_1, \ldots, y_n\}$ in Theorem \ref{HPP-Poi-CGL} as the sequence of homogeneous Poisson prime elements of $A$.
\hfill $\diamond$
}
\end{definition}

Let $A = \mathbf{k}[x_1;\theta_1,\delta_1] \cdots [x_n;\theta_n,\delta_n]_{(\lambda_1, \ldots, \lambda_n)}$ be a Poisson-CGL extension and let $y_A = \{y_1, \ldots, y_n\}$ be the sequence of homogeneous Poisson prime elements of $A$. Note that the map $p$, defined as in Theorem \ref{HPP-Poi-CGL} for $A$, is injective when restricted to $\{ j \in [1, n]: \delta_j \neq 0\}$.

\begin{definition}
\label{level}
{\rm
Two integers $i < j$ in $[1, n]$ are said to have the same level if $i=p^m(j)$ for some positive integer $m$; The set of all
integers in the same level is called a level set of $A$; The number of level sets is called the rank of $A$. (Note that the level sets of $A$ are exactly the level sets of the function $\eta$ for $A$ in \cite[Theorem 5.5]{PC}).
\hfill $\diamond$
}
\end{definition}

The following lemma follows directly from Theorem \ref{HPP-Poi-CGL}.

\begin{lemma}
\label{one-one}
1) For each level set $K$, let $k_{max}$ be the largest integer in $K$. Then $y_{k_{max}}$ is a homogeneous Poisson prime element of $A$. Conversely, any homogeneous Poisson prime element of $A$ is a scalar multiple of $y_{k_{max}}$ for some level set $K$.

2) Define the $\mathbb{N}^n$-valued degrees for elements in $A$ as in $\S$\ref{subsec-not}. For $j \in [1,n]$, let $f_j=(f_{j,1},...,f_{j,n}) \in \mathbb{N}^n$ be the degree of $y_{j}$ and let $K_j$ be the level set that contains $j$. For $j,l \in [1,n]$, $f_{j,l}=1$ if $l \in [1,j] \cap K_j$ and $f_{j,l}=0$ otherwise.
\end{lemma}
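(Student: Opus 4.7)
The plan is to derive both parts of the lemma directly from Theorem \ref{HPP-Poi-CGL}, using the explicit description $P_j = Q_j \sqcup \bigsqcup_{i \in I_j} Q_i$ together with $|Q_i/\mathbf{k}^\times| = 1$ (so every homogeneous Poisson prime of $A_j$ is up to a scalar one of the $y_i$'s), and using the recursion $y_j = y_{p(j)}(x_j - d_j)$.

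For part 1), I would specialize Theorem \ref{HPP-Poi-CGL} to $j = n$, which says that every homogeneous Poisson prime of $A = A_n$ is a scalar multiple of $y_i$ for either $i = n$ or $i \in I_n = \{i < n : \delta_{i+1}(y_i) = \cdots = \delta_n(y_i) = 0\}$. The remaining task is to identify $\{n\} \cup I_n$ with the set of maxima of the level sets of $A$. If $i \in I_n$ and some $l > i$ lay in the same level set as $i$, then tracing the chain $i = p^{m}(l), p^{m-1}(l), \ldots, p(l)$ of iterated $p$-preimages produces some intermediate $l' \in [i+1, n]$ with $p(l') = i$, which forces $\delta_{l'}(y_i) \neq 0$, contradicting $i \in I_n$. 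For the converse, if $k_{\max}$ is the top of its level set and $k_{\max} < n$, I would argue that $\delta_l(y_{k_{\max}}) = 0$ for every $l \in [k_{\max}+1, n]$ by strong induction on $l$: assuming it holds for all indices strictly between $k_{\max}$ and $l$ gives $k_{\max} \in I_{l-1}$, hence $y_{k_{\max}} \in P_{l-1}$; if $\delta_l(y_{k_{\max}})$ were nonzero, the uniqueness clause in the definition of $p(l)$ would force $p(l) = k_{\max}$, contradicting the maximality of $k_{\max}$ in its level set.

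For part 2), I would induct on $j$. When $\delta_j = 0$ we have $p(j) = 0$, so $y_j = x_j$ has degree $e_j$, and the level set $K_j$ intersected with $[1,j]$ is $\{j\}$, matching the formula. When $\delta_j \neq 0$, set $i = p(j)$ and expand $y_j = y_i(x_j - d_j)$ with $d_j$ in the fraction field of $A_{j-1}$. Since $y_i d_j \in A_{j-1}$, its degree vector is supported in $[1, j-1]$, hence is strictly dominated in the total order by the degree of $y_i x_j$, which equals $f_i + e_j$ by the inductive hypothesis. Thus $f_j = f_i + e_j$, and the combinatorial claim reduces to verifying $[1,j] \cap K_j = ([1,i] \cap K_i) \cup \{j\}$. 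This in turn follows from the fact that each level set, as an orbit under iterated $p$, is a totally ordered chain, and that $p(j) = i$ forces $i$ and $j$ to be consecutive in that chain, so no element of $K_j$ can lie strictly between $i$ and $j$.

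I expect the main obstacle to be the converse direction in part 1), where one needs to convert the combinatorial statement "$k_{\max}$ is the top of its level set under $p$" into the algebraic statement "$\delta_l(y_{k_{\max}}) = 0$ for all $l > k_{\max}$". This is what requires the simultaneous induction showing $y_{k_{\max}} \in P_{l-1}$ at each step so that the uniqueness of $p(l)$ in Theorem \ref{HPP-Poi-CGL} can be applied; the rest is bookkeeping that unpacks the theorem.
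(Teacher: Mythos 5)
Your proposal is correct. The paper offers no proof of this lemma, stating only that it ``follows directly from Theorem \ref{HPP-Poi-CGL}'', and your argument is exactly the natural unpacking that statement invites: for part 1) you identify $\{n\}\cup I_n$ with the set of level-set maxima by translating between the combinatorial condition ``top of its level set under $p$'' and the algebraic condition ``$\delta_l(y_{k_{\max}})=0$ for all $l>k_{\max}$'' (using $Q_{k_{\max}}=\mathbf{k}^\times y_{k_{\max}}$ and the uniqueness of $p(l)$), and for part 2) you read off the degree from the recursion $y_j=y_{p(j)}x_j-c_j$ using that $c_j\in A_{j-1}$ is supported in $[1,j-1]$ and that $p$ is strictly decreasing so no element of $K_j$ lies strictly between $p(j)$ and $j$. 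All steps check out; the only cosmetic point is that the first direction of part 1) should also note explicitly that $n$ is always the maximum of its own level set (trivially, since nothing in $[1,n]$ exceeds $n$), so that the two inclusions together give the claimed bijection.
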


By Theorem \ref{HPP-Poi-CGL}, for $j \in [1,n]$, $y_j = y_{p(j)}x_j - c_j$, where $c_j \in A_{j-1}$. It follows that the algebra $A$ is contained in the Laurent polynomial ring generated by $y_A$, i.e.,
\[ A \subset \Gamma:=\mathbf{k}[y^{\pm 1}_1,...,y^{\pm 1}_n]. \]
Equip $\Gamma$ with the extended Poisson structure from $A$. As a $\mathbf{k}$-vector space, $\Gamma$ has a natural basis
\[ \{y^{k_1}_1 \cdots y^{k_n}_n : (k_1,...,k_n) \in \mathbb{Z}^n\}. \]
For $v=(v_1,...,v_n) \in \mathbb{Z}^n$, define $y^v=\prod^n_{i=1}y^{v_i}_i$. The Poisson structure on $\Gamma$ defines a skew-symmetric bilinear form
\[ \Lambda_Y : \mathbb{Z}^n \otimes \mathbb{Z}^n \rightarrow \mathbb{Z} \]
such that
\[ \{y^v, \ y^w\}=\Lambda_Y(v,w)y^{v+w}, \ \ v,w \in \mathbb{Z}^n. \]

\begin{definition}
\label{Poi-tor}
{\rm
The algebra $\Gamma$, equipped with the extended Poisson structure from $A$, is called the Poisson torus of $A$; The matrix $\Lambda_Y:=(\kappa_{i,j})_{i,j \in [1,n]}$ is called the Poisson matrix of $\Gamma$.
\hfill $\diamond$
}
\end{definition}

The following lemma-notation is crucial for the construction of quantization in $\S$\ref{sec-qua}.

\begin{lemma-notation}
\label{mon}
For $k \in [1,n]$ such that $p(k) \neq 0$ (or equivalently $\delta_k \neq 0$),
\[ \Gamma_{k-1}:=\mathbf{k}[y^{\pm 1}_1,...,y^{\pm 1}_{k-1}] \ \ \text{and} \ \ \Gamma'_{k-1}:=\mathbf{k}[y^{\pm 1}_1,...,y^{\pm 1}_{p(k)-1},y^{\pm 1}_{p(k)+1},...,y^{\pm 1}_{k-1}]. \]
Let $\Lambda_{Y,k-1}=(\kappa_{i,j})_{i,j \in [1,k-1]}$ be the Poisson matrix of $\Gamma_{k-1}$. Then $c_k \in \Gamma'_{k-1}[y_{p(k)}]$, the constant term of which, denoted by $b_k$, is a nonzero monomial in $\Gamma'_{k-1}$. Moreover, $b_k$ is a scalar multiple of $y^v \in \Gamma_{k-1}$, where $v$ is the unique vector in $\mathbb{Z}^{k-1}$ satisfying

1) $y^v$ has the same weight as $y_k$;

2) $\Lambda_{Y,k-1}(v,e_l)=\left\{
\begin{array}{ll}
\kappa_{k,l} & l \in [1,k-1] \setminus \{p(k)\}\\
\kappa_{k,p(k)}+\eta_k & l=p(k)
\end{array}\right.$
\end{lemma-notation}

\begin{proof}
For $j \in [1,n]$, let $\eta_j$ be as in 3) of Definition \ref{T-Poi-CGL}. By Theorem \ref{HPP-Poi-CGL}, for $l \in [1,k-1]$,
\[ x_l=\frac{y_l+c_l}{y_{p(l)}}, \ \text{where} \ c_l \in A_{l-1}. \]
Recall that the map $p$, when restricted to $\{ j \in [1, n]: \delta_j \neq 0\}$, is injective. Since $\delta_k \neq 0$, $p(k) \neq p(l)$ for any $l \in [1,k-1]$. As $A_{k-1}$ is generated by those $x_1,...,x_{k-1}$, $A_{k-1} \subset \Gamma'_{k-1}[y_{p(k)}]$. Because $c_k \in A_{k-1}$, $c_k \in \Gamma'_{k-1}[y_{p(k)}]$.

Since $\delta_k \neq 0$, by Theorem \ref{HPP-Poi-CGL}, $c_k \neq 0$. Assume $b_k=0$. Then $c_k=y_{p(k)}\frac{a_1}{a_2}$, where $a_1,a_2 \in A_{k-1}$ and $a_2 \notin y_{p(k)}A_{k-1}$. One sees that $c_ka_2 \in y_{p(k)}A_{k-1}$. As $y_{p(k)}$ is prime in $A_{k-1}$ and $a_2 \notin y_{p(k)}A_{k-1}$, one has $c_k \in y_{p(k)}A_{k-1}$, which contradicts to $y_k=y_{p(k)}x_k-c_k$ being prime in $A_k$. Thus, $b_k \neq 0$.

By Theorem \ref{HPP-Poi-CGL}, the distinguished element with respect to $A_{k-1}[x_k;\theta_k,\delta_k]_{\lambda_k}$ is
\[ d_k=\frac{c_k}{y_{p(k)}}. \]
Since $b_k$ is the constant term of $c_k \in \Gamma'_{k-1}[y_{p(k)}]$, $\frac{b_k}{y_{p(k)}}$ is a monomial term of $d_k \in \Gamma'_{k-1}[y^{\pm 1}_{p(k)}]$ and
\[ d_k-\frac{b_k}{y_{p(k)}} \in \Gamma'_{k-1}[y_{p(k)}]. \]
For $l \in [1,k-1]$, by (\ref{dis1}),
\[ \{d_k, \ y_l\}-\theta_k(y_l)d_k=\delta_k(y_l) \in A_{k-1} \subset \Gamma'_{k-1}[y_{p(k)}]. \]
It is easy to see that $\Gamma'_{k-1}[y_{p(k)}]$ is a Poisson subalgebra of $\Gamma_{k-1}$, so for $l \in [1,k-1]$,
\[ \{d_k-\frac{b_k}{y_{p(k)}}, \ y_l\}-\theta_k(y_l)(d_k-\frac{b_k}{y_{p(k)}}) \in \Gamma'_{k-1}[y_{p(k)}]. \]
Hence, for $l \in [1,k-1]$,
\[ \{\frac{b_k}{y_{p(k)}}, \ y_l\}-\theta_k(y_l)\frac{b_k}{y_{p(k)}} \in \Gamma'_{k-1}[y_{p(k)}]. \]
Therefore,
\begin{equation}
\label{equ-mon-1}
\{\frac{b_k}{y_{p(k)}}, \ y_l\}=\theta_k(y_l)\frac{b_k}{y_{p(k)}}, \ \ l \in [1,k-1] \setminus \{p(k)\}.
\end{equation}
By Theorem \ref{HPP-Poi-CGL}, $\delta_k(y_{p(k)})=\eta_k y_{p(k)}d_k$. By (\ref{dis1}),
\[ \{d_k, \ y_{p(k)}\}=(\theta_{k}(y_{p(k)})+\eta_k y_{p(k)})d_k. \]
As $\frac{b_k}{y_{p(k)}}$ is a monomial term of $d_k \in \Gamma'_{k-1}[y^{\pm 1}_{p(k)}]$,
\begin{equation}
\label{equ-mon-2}
\{\frac{b_k}{y_{p(k)}}, \ y_{p(k)}\}=(\theta_{k}(y_{p(k)})+\eta_ky_{p(k)})\frac{b_k}{y_{p(k)}}.
\end{equation}
Through direct computation and using (\ref{equ-mon-1}) (\ref{equ-mon-2}), one has
\begin{equation}
\label{equ-mon-3}
\{b_k, \ y_{l}\}=\left\{
\begin{array}{ll}
\kappa_{k,l}y_lb_k & l \in [1,k-1] \setminus \{p(k)\}\\
(\kappa_{k,p(k)}+\eta_k)y_{p(k)}b_k & l=p(k)
\end{array}\right.
\end{equation}
It is straightforward to see that every monomial term $b'_k$ of $b_k$ has the same weight as $y_k$ and satisfies (\ref{equ-mon-3}) by replacing $b_k$. As $b'_k$ is a monomial in $\Gamma'_{k-1}$, it is a scalar multiple of $y^v \in \Gamma_{k-1}$ for some $v \in \mathbb{Z}^{k-1}$. Then $y^v$ has the same weight as $y_k$ and
\[ \Lambda_{Y,k-1}(v,e_l)=\left\{
\begin{array}{ll}
\kappa_{k,l} & l \in [1,k-1] \setminus \{p(k)\}\\
\kappa_{k,p(k)}+\eta_k & l=p(k)
\end{array}\right. \]
By \cite[Lemma 11.10]{PC}, such $v$ is unique and $b_k$ has to be a monomial in $\Gamma'_{k-1}$.
\end{proof}

\subsection{Symmetric Poisson-CGL extensions}\label{subsec-sym-Poi-CGL}

Most interesting examples of Poisson-CGL extensions satisfy the "symmetry" condition:

\begin{definition}
\cite[Definition 4]{QQ}\cite[Definition 6.1]{PC}
\label{sym}
{\rm
Let
\[ A=\mathbf{k}[x_1;\theta_1,\delta_1] \cdots [x_n;\theta_n,\delta_n]_{(\lambda_1,...,\lambda_n)} \]
be a Poisson-CGL extension. Then $A$ is said to be symmetric if

1) for $1 \leq i <j \leq n$, $\delta_j(x_i) \in A_{[i+1,j-1]}$;

2) there exist $h'_1,...,h'_n \in \mathfrak{t}$ such that

\indent \indent a) $\theta_j(x_i)=-\lambda_j(h'_i)x_i$ for $1 \leq i < j \leq n$,

\indent \indent b) $\lambda_{j}(h'_{j}) \neq 0$ for $j \in [1,n]$.\\
Here, $A_{[i+1,j-1]}$ is the subalgebra of $A$ generated by $x_{i+1},...,x_{j-1}$.
\hfill $\diamond$
}
\end{definition}

Let $B=\mathbf{k}[x_0;\theta_0,\delta_0][x_1;\theta_1,\delta_1] \cdots [x_n;\theta_n,\delta_n]_{(\lambda_0,\lambda_1,...,\lambda_n)}$ be a symmetric Poisson-CGL extension. Then the subalgebra
\[ A:=\mathbf{k}[x_1;\theta_1,\delta_1] \cdots [x_n;\theta_n,\delta_n]_{(\lambda_1,...,\lambda_n)}\]
is also a symmetric Poisson-CGL extension. Let $p$ be the map for $A$ as in Theorem \ref{HPP-Poi-CGL}. Let $y_A=(y_1,...,y_n)$ be the sequence of homogeneous Poisson prime elements of $A$ and let $\Gamma$ be the Poisson torus of $A$.

Note that $B$ can be viewed as a Poisson-Cauchon extension $B=A[x_0;\theta,\delta]_{\lambda_0}$ of $A$, where
\[ \theta(x_j)x_0=-\theta_j(x_0)x_j \ \ \text{and} \ \ \delta(x_j)=-\delta_j(x_0), \ \ j \in [1,n].\]
Let $h_0,h_1,...,h_n$ be in $\mathfrak{t}$ such that they satisfy 3) of Definition \ref{T-Poi-CGL} for $B$. Let $h'_0,h'_1,...,h'_n$ be in $\mathfrak{t}$ such that they satisfy 2) of Definition \ref{sym} for $B$. Then for $j \in [1,n]$, $A_{j}[x_0;\theta,\delta]_{\lambda_0}$ (here $\theta$ and $\delta$ are their restrictions on $A_j$) is a Poisson-Cauchon extension of $A_j$, for which $h'_0$ satisfies 2) of Definition \ref{T-P-O}. Furthermore,
\[ \mathbf{k}[x_1;\theta_1,\delta_1] \cdots [x_j;\theta_j,\delta_j][x_0;\theta,\delta]_{(\lambda_1,...,\lambda_j,\lambda_0)} \]
is a Poisson-CGL extension, for which $h_1,...,h_j,h'_0$ satisfy 3) of Definition \ref{T-Poi-CGL}.

\begin{lemma}
\label{k-p(k)}
For $k \in [1,n]$ such that $p(k) \neq 0$, $\delta(y_k)=0$ if and only if $\delta(y_{p(k)})=0$.
\end{lemma}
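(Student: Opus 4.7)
The plan is to prove both implications via Theorem~\ref{HPP-Poi-CGL}, applied to the Poisson-CGL extensions $A_{k-1}[x_0;\theta,\delta]_{\lambda_0}$ and $A_k[x_0;\theta,\delta]_{\lambda_0}$ (both Poisson-CGL extensions by the paragraph preceding the lemma), together with the sub-extension $B_k=\mathbf{k}[x_0;\theta_0,\delta_0][x_1;\theta_1,\delta_1]\cdots[x_k;\theta_k,\delta_k]$ of $B$ in its original ordering. The key input is the uniqueness clause of Theorem~\ref{HPP-Poi-CGL}: when the relevant derivation is nonzero, there is exactly one homogeneous Poisson prime of the previous subalgebra on which it fails to vanish.

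For the implication $\delta(y_k)=0 \Longrightarrow \delta(y_{p(k)})=0$, I argue by contrapositive. Assuming $\delta(y_{p(k)})\neq 0$, uniqueness in Theorem~\ref{HPP-Poi-CGL} applied to $A_{k-1}[x_0;\theta,\delta]_{\lambda_0}$ forces $p(k)$ to be the only index $i\in[1,k-1]$ for which $y_i\in P_{k-1}$ and $\delta(y_i)\neq 0$. Since $I_k=(\{k-1\}\cup I_{k-1})\setminus\{p(k)\}$, this gives $\delta(y_j)=0$ for every $j\in I_k$. Because $\delta|_{A_k}\neq 0$ and $P_k=\{y_k\}\sqcup\{y_j:j\in I_k\}$ (modulo $\mathbf{k}^{\times}$), the unique index furnished by Theorem~\ref{HPP-Poi-CGL} applied to $A_k[x_0;\theta,\delta]_{\lambda_0}$ must therefore be $k$, so $\delta(y_k)\neq 0$. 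The converse implication, under the additional assumption $\delta|_{A_{k-1}}\neq 0$, is handled symmetrically: Theorem~\ref{HPP-Poi-CGL} applied to $A_{k-1}[x_0;\theta,\delta]_{\lambda_0}$ yields a unique $i^*\in[1,k-1]$ with $y_{i^*}\in P_{k-1}$ and $\delta(y_{i^*})\neq 0$; the hypothesis $\delta(y_{p(k)})=0$ forces $i^*\neq p(k)$, hence $y_{i^*}\in P_k$, and uniqueness applied to $A_k[x_0;\theta,\delta]_{\lambda_0}$ then identifies $i^*<k$ as the unique index there, yielding $\delta(y_k)=0$.

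The remaining subcase $\delta|_{A_{k-1}}=0$ of the converse is the main obstacle: the uniqueness arguments above give no information because no prime of $A_{k-1}$ has nonzero $\delta$; indeed a priori one might still have $\delta_k(x_0)\neq 0$, which would give $\delta(y_k)=\delta(y_{p(k)})x_k-y_{p(k)}\delta_k(x_0)-\delta(c_k)=-y_{p(k)}\delta_k(x_0)\neq 0$ and contradict the lemma. To exclude this, I pass to the Poisson-CGL $B_k$ in its original ordering. The assumption $\delta|_{A_{k-1}}=0$ is equivalent to $\delta_j(x_0)=0$ for every $j\in[1,k-1]$, and a straightforward induction using Theorem~\ref{HPP-Poi-CGL} shows that, after reindexing, the sequence of homogeneous Poisson prime elements of $B_k$ is $(x_0,y_1,\ldots,y_k)$, and that the homogeneous Poisson primes of $B_{k-1}$ are precisely $x_0$ together with those of $A_{k-1}$. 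Since $\delta_k$ vanishes on every prime of $A_{k-1}$ except $y_{p(k)}$, on which it is nonzero by the definition of $p(k)$, the uniqueness in Theorem~\ref{HPP-Poi-CGL} applied at the final step of $B_k$ forces $\delta_k(x_0)=0$; hence $\delta(x_k)=-\delta_k(x_0)=0$, so $\delta|_{A_k}=0$ and $\delta(y_k)=0$.
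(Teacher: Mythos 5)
Your argument is correct, but it takes a more roundabout route than the paper in both directions, and one intermediate claim is slightly mis-stated.

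For the implication $\delta(y_k)=0 \Rightarrow \delta(y_{p(k)})=0$, the paper simply expands
$\delta(y_k)=\delta(y_{p(k)})x_k+y_{p(k)}\delta(x_k)-\delta(c_k)$ and observes, from symmetry, that
$y_{p(k)}\delta(x_k)-\delta(c_k)\in A_{k-1}$, so the coefficient of $x_k$ must vanish. Your contrapositive, chaining together the uniqueness clause of Theorem~\ref{HPP-Poi-CGL} for $A_{k-1}[x_0]$ and $A_k[x_0]$ and the identity $I_k=(\{k-1\}\cup I_{k-1})\setminus\{p(k)\}$, is valid but replaces a one-line computation by a multi-step argument about prime sets. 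The saving in the paper's method is that no knowledge of the structure of $P_{k-1}$ or $P_k$ is needed for this direction at all.

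For the converse, the paper runs a single proof by contradiction: assume $\delta(y_{p(k)})=0$ and $\delta(y_k)\neq 0$, deduce $\delta(P_{k-1})=0$ and hence $\delta|_{A_{k-1}}=0$ by uniqueness, apply Theorem~\ref{HPP-Poi-CGL} to the presentation $A_{k-1}[x_0;\theta,\delta]_{\lambda_0}$ (not the $B$-ordering) to see in one step that $x_0$ and $y_{p(k)}$ are both homogeneous Poisson primes of the same ring $B_{k-1}$, and then apply the theorem once to $B_k$ in the original ordering to force $\delta_k(x_0)=0$. Your case split covers the same ground, and Case 1 is fine, but in Case 2 you reach the crucial input ``$x_0$ is a homogeneous Poisson prime of $B_{k-1}$'' via an induction on the ordering $\mathbf{k}[x_0][x_1]\cdots[x_{k-1}]$. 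That induction does work, but it is more than a sentence's worth of verification (at each step you need to track that $p_B(j+1)$ shifts by one and that the prime set picks up exactly $\{x_0\}$), and the paper's single application of the theorem to $A_{k-1}[x_0]$ bypasses it entirely. Also, your parenthetical claim that the sequence of homogeneous Poisson primes of $B_k$ is $(x_0,y_1,\ldots,y_k)$ is circular as stated, since knowing $y^B_{k+1}$ already presupposes $\delta_k(x_0)=0$, which is exactly what you are trying to prove; fortunately you only use the claim about $B_{k-1}$, which does follow from the hypothesis $\delta|_{A_{k-1}}=0$.

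Finally, your Case 1 (when $\delta|_{A_{k-1}}\neq 0$) does correctly handle a subcase that the paper's contradiction argument disposes of implicitly (by showing the assumption $\delta(y_k)\neq 0$ forces $\delta|_{A_{k-1}}=0$), so this is a genuine difference in organization rather than in substance.
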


\begin{proof}
1) By Theorem \ref{HPP-Poi-CGL}, $y_k=y_{p(k)}x_k-c_k$, where $c_k \in A_{k-1}$. Then
\[ \delta(y_k)=\delta(y_{p(k)}x_k-c_k)=\delta(y_{p(k)})x_k+y_{p(k)}\delta(x_k)-\delta(c_k). \]
Since $B$ is symmetric, $y_{p(k)}\delta(x_k)-\delta(c_k) \in A_{k-1}$. Then $\delta(y_k)=0$ would imply $\delta(y_{p(k)})=0$.

2) Conversely, assume $\delta(y_{p(k)})=0$ but $\delta(y_k) \neq 0$. Recall that $P_{k}$ and $P_{k-1}$ are the sets of all homogeneous Poisson prime elements of $A_k$ and $A_{k-1}$ respectively. Applying Theorem \ref{HPP-Poi-CGL} to
\[ \mathbf{k}[x_1;\theta_1,\delta_1] \cdots [x_k;\theta_k,\delta_k][x_0;\theta,\delta]_{(\lambda_1,...,\lambda_k,\lambda_0)}, \]
one sees that $\delta$ vanishes on $P_k \setminus (\mathbf{k}^{\times}y_k)$. Still by Theorem \ref{HPP-Poi-CGL},
\[ P_{k-1}=(P_k \setminus (\mathbf{k}^{\times}y_k)) \cup (\mathbf{k}^{\times}y_{p(k)}). \]
Since $\delta(y_{p(k)})=0$ by assumption, $\delta(P_{k-1})=0$. Applying Theorem \ref{HPP-Poi-CGL} to
\[ \mathbf{k}[x_1;\theta_1,\delta_1] \cdots [x_{k-1};\theta_{k-1},\delta_{k-1}][x_0;\theta,\delta]_{(\lambda_1,...,\lambda_{k-1},\lambda_0)},\]
one has $\delta=0$ on $A_{k-1}$ and $x_0,y_{p(k)}$ are both homogeneous Poisson prime elements of $A_{k-1}[x_0]$. Applying Theorem \ref{HPP-Poi-CGL} to
\[ \mathbf{k}[x_0;\theta_0,\delta_0][x_1;\theta_1,\delta_1] \cdots [x_{k};\theta_{k},\delta_{k}]_{(\lambda_0,\lambda_1,...,\lambda_{k})},\]
one has $\delta_k(x_0)=0$ since $\delta_k(y_{p(k)}) \neq 0$. It follows from $\delta(x_k)=-\delta_k(x_0)$ that $\delta(x_k)=0$. Then $\delta=0$ on $A_k$, contradicting to $\delta(y_k) \neq 0$. Therefore, $\delta(y_{p(k)})=0$ would imply $\delta(y_{k})=0$.
\end{proof}

In the rest of this section, assume $\delta \neq 0$. By Theorem \ref{HPP-Poi-CGL}, there exists a unique $k \in [1,n]$ such that $\delta(y_k) \neq 0$ and $y_k$ is a homogeneous Poisson prime element of $A$. Let $\eta$ be as in 2) of Definition \ref{T-P-O} for $A[x_0;\theta,\delta]_{\lambda_0}$. Let $m$ be the largest nonnegative integer such that $p^{m}(k) \in [1,n]$ and define
\[ d^{(m+1)}=0 \ \ \text{and} \ \ d^{(i)}=\frac{\delta(y_{p^i(k)})}{\eta y_{p^i(k)}} \in \Gamma. \]
By Theorem \ref{HPP-Poi-CGL}, $d^{(i)}$ is the distinguished element with respect to $A_{p^{i-1}(k)-1}[x_0;\theta,\delta]_{\lambda_0}$ ($p^{-1}(k):=n+1$), for $i \in [0,m]$.

\begin{lemma}
\label{d-M}
1) Let $i \in [0,m]$ and one has
\[ d^{(i)}-d^{(i+1)}=\frac{M^{(i)}}{y_{p^i(k)}y_{p^{i+1}(k)}},\]
where $M^{(i)}$ is either a nonzero constant or a product of homogeneous Poisson prime elements of $A_{p^{i}(k)-1}$.\\
2) Let $i \in [0,m-1]$ and let $b_{p^{i}(k)}$ be as in Lemma-Notation \ref{mon}. One has
\[ d^{(i)}-d^{(i+1)}=\frac{b_{p^{i}(k)}}{y_{p^{i}(k)}}(d^{(i+1)}-d^{(i+2)}). \]
\end{lemma}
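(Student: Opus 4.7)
Set $j = p^i(k)$ throughout. The plan is to prove Part~1 by explicitly computing $M^{(i)}$, verifying it is a homogeneous Poisson element of $A_{j-1}$, and then invoking Lemma~\ref{prime factor}; Part~2 will then follow by combining the Part~1 formulas for two consecutive differences and reducing to an explicit algebraic identity.

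For Part~1, the first key step is a Leibniz-rule simplification using $y_j = y_{p(j)}(x_j - d_j)$ and $c_j = y_{p(j)} d_j$, which yields
\[
y_{p(j)}\delta(y_j) - y_j\delta(y_{p(j)}) = y_{p(j)}^2\,\delta(x_j - d_j) = y_{p(j)}^2 \delta(x_j) - y_{p(j)}\delta(c_j) + c_j\delta(y_{p(j)}),
\]
so $M^{(i)}$ equals the right-hand side divided by $\eta$. Each term visibly lies in $A_{j-1}$: $\delta(x_j) = -\delta_j(x_0) \in A_{[1,j-1]}$ by the symmetric condition, while $c_j, y_{p(j)} \in A_{j-1}$ and $\delta$ restricts to a derivation on $A_{j-1}$. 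Homogeneity of $M^{(i)}$ is immediate from the weights of $d^{(i)}, d^{(i+1)}, y_j, y_{p(j)}$.

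To prove $M^{(i)}$ is Poisson in $A_{j-1}$, I decompose its log-Hamiltonian restricted to $A_{j-1}$ as
\[
H_{\log(M^{(i)})}|_{A_{j-1}} = H_{\log(y_j)}|_{A_{j-1}} + H_{\log(y_{p(j)})}|_{A_{j-1}} + \theta|_{A_{j-1}},
\]
where the third summand is obtained from $\{d^{(i)} - d^{(i+1)}, r\} = \theta(r)(d^{(i)} - d^{(i+1)})$ on $\mathrm{Frac}(A_{j-1})$, a consequence of the distinguished properties. Now $\theta = \partial h_0|_A$ obviously preserves $A_{j-1}$; the element $y_{p(j)}$ is a homogeneous Poisson prime in $A_{j-1}$ by the definition of $p(j)$, so $H_{\log(y_{p(j)})}$ preserves $A_{j-1}$; and expanding $\{y_{p(j)}(x_j - d_j), r\}$ using the distinguished property of $d_j$ yields the identity $H_{\log(y_j)}|_{A_{j-1}} = \theta_j + H_{\log(y_{p(j)})}|_{A_{j-1}}$, so $H_{\log(y_j)}$ also preserves $A_{j-1}$. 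For nonvanishing of $M^{(i)}$, applying Lemma-Notation~\ref{mon} to the Cauchon extension $A_{p^{i-1}(k)-1}[x_0;\theta,\delta]_{\lambda_0}$ shows that $d^{(i)}$ has a nonzero residue at $y_j = 0$ in the Poisson torus, while $d^{(i+1)} \in \Gamma_{j-1}$ is independent of $y_j$. Lemma~\ref{prime factor} applied to the UFD $A_{j-1} = \mathbf{k}[x_1,\ldots,x_{j-1}]$ then yields the claimed factorization.

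For Part~2, applying the Part~1 formula to both $d^{(i)} - d^{(i+1)}$ and $d^{(i+1)} - d^{(i+2)}$ reduces the claim, after clearing common factors, to the identity
\[
y_{p(j)}^{2}\,\delta(x_j - d_j) \;=\; b_j\, y_{p^2(j)}\, \delta(x_{p(j)} - d_{p(j)}),
\]
equivalently $M^{(i)} y_{p^2(j)} = b_j M^{(i+1)}$. I verify this by substituting $d_j = b_j/y_{p(j)} + c'_j$ with $c'_j \in \Gamma'_{j-1}[y_{p(j)}]$ (Lemma-Notation~\ref{mon}), using $\delta(y_{p^s(k)}) = \eta\, y_{p^s(k)}\, d^{(s)}$ for $s \geq 0$, and then matching terms via the Poisson conditions characterizing $b_j$ from Lemma-Notation~\ref{mon}. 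I expect the main obstacle to be organizing this algebraic verification cleanly. An alternative route is to set $\tilde d := d^{(i+1)} + (b_j/y_j)(d^{(i+1)} - d^{(i+2)})$ and invoke uniqueness of the distinguished element (Lemma~\ref{dis-uni}) by showing $\tilde d$ satisfies the distinguished equation for $A_{p^{i-1}(k)-1}[x_0;\theta,\delta]_{\lambda_0}$; here the crucial ingredient is that the Poisson conditions on $b_j$ force $H_{\log(b_j/y_j)}$ to act on $\Gamma_{j-1}$ as $\eta_j\, y_{p(j)}\frac{\partial}{\partial y_{p(j)}}$, in particular annihilating $\Gamma_{p(j)-1}$, which makes the verification of the distinguished equation on the generators of $A_{p^{i-1}(k)-1}$ tractable.
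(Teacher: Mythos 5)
Your Part~1 argument is correct and follows essentially the same strategy as the paper: decompose $H_{\log(M^{(i)})}$ into $H_{\log(y_j)}+H_{\log(y_{p(j)})}+H_{\log(d^{(i)}-d^{(i+1)})}$, show each piece preserves $A_{j-1}$ (using Remark~\ref{ham-1} and the fact that $y_{p(j)}$ is Poisson in $A_{j-1}$), and invoke Lemma~\ref{prime factor}. Two of your variations are actually slight improvements in explicitness: you write out the expansion that shows $M^{(i)}\in A_{j-1}$ (the paper just says ``through direct computation''), and for nonvanishing you use the $y_j$-residue of $d^{(i)}$ coming from Lemma-Notation~\ref{mon} applied to the Cauchon extension $A_{p^{i-1}(k)-1}[x_0;\theta,\delta]_{\lambda_0}$, where the paper instead argues by contradiction via Theorem~\ref{HPP-Poi-CGL}. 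Both nonvanishing arguments are valid and rely on essentially the same content (the nonzero constant term $b$).

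Part~2 is where your proposal has a genuine gap. You correctly reduce the claim to the identity $M^{(i)}\,y_{p^2(j)}=b_j\,M^{(i+1)}$, but you do not prove it — you explicitly say you ``expect the main obstacle to be organizing this algebraic verification cleanly,'' and the sketch of substituting $d_j=b_j/y_{p(j)}+c'_j$ and ``matching terms'' does not indicate how the matching would actually go. The paper's key idea, which you are missing, is a degree argument in the $\mathbb{Z}$-grading of $\Gamma_{p^i(k)-1}=\Gamma'_{p^i(k)-1}[y_{p^{i+1}(k)}^{\pm 1}]$ by powers of $y_{p^{i+1}(k)}$: one first shows
\[
M^{(i)}-y_{p^{i+1}(k)}\,c_{p^{i}(k)}\,d^{(i+1)}\in y_{p^{i+1}(k)}A_{p^{i}(k)-1},
\]
so the left side has degree $\geq 1$; then one observes that the lowest-degree term of $y_{p^{i+1}(k)}c_{p^{i}(k)}d^{(i+1)}$ is $y_{p^{i+1}(k)}b_{p^{i}(k)}(d^{(i+1)}-d^{(i+2)})$ of degree $0$ (using the constant term $b_{p^i(k)}$ from Lemma-Notation~\ref{mon} and the degree $-1$ of $d^{(i+1)}-d^{(i+2)}$ from Part~1); and finally, since $M^{(i)}$ is a monomial in $\Gamma_{p^i(k)-1}$ by Part~1 together with 1) of Lemma~\ref{one-one}, $M^{(i)}$ must equal exactly this degree-$0$ part. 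Your alternative route via uniqueness of the distinguished element and the computation $H_{\log(b_j/y_j)}|_{\Gamma_{j-1}}=\eta_j\,y_{p(j)}\partial/\partial y_{p(j)}$ is a plausible starting point, but you do not address the verification of the distinguished equation on the generators $x_l$ with $l\geq p^i(k)$ (which do not lie in $A_{j-1}$ and for which neither $d^{(i+1)}$ nor the quoted Poisson conditions on $b_j$ give you control directly); that verification is nontrivial and is left undone.
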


\begin{proof}
1) Let $F$ be the fraction field of $A$. Every nonzero element in $F$ is Poisson and has its log-Hamiltonian derivations on $F$ (see Definition \ref{ham-def}). By (\ref{dis1}) and by the definitions of $d^{(i)}$ and $d^{(i+1)}$, for $a \in A_{p^{i}(k)-1}$,
\[ \{d^{(i)}, \ a\}=\theta(a)d^{(i)}+\delta(a) \ \ \text{and} \ \ \{d^{(i+1)}, \ a\}=\theta(a)d^{(i+1)}+\delta(a). \]
Define $g_i=d^{(i)}-d^{(i+1)}$ and one has $H_{\log (g_i)}|_{A_{p^{i}(k)-1}}=\theta|_{A_{p^{i}(k)-1}}$, which clearly implies $H_{\log (g_i)}(A_{p^{i}(k)-1}) \subset A_{p^{i}(k)-1}$. By Theorem \ref{HPP-Poi-CGL}, $y_{p^{i}(k)}=y_{p^{i+1}(k)}(x_{p^{i}(k)}-d_{p^{i}(k)})$. By Remark \ref{ham-1},
\begin{align*}
H_{\log (y_{p^{i}(k)})}(A_{p^{i}(k)-1})&=H_{\log (x_{p^{i}(k)}-d_{p^{i}(k)})}(A_{p^{i}(k)-1})+H_{\log (y_{p^{i+1}(k)})}(A_{p^{i}(k)-1}) \\ &=\theta_{p^{i}(k)}(A_{p^{i}(k)-1})+H_{\log (y_{p^{i+1}(k)})}(A_{p^{i}(k)-1}).
\end{align*}
Since $y_{p^{i+1}(k)}$ is Poisson in $A_{p^{i}(k)-1}$, $H_{\log (y_{p^{i+1}(k)})}(A_{p^{i}(k)-1}) \subset A_{p^{i}(k)-1}$. Thus,
\[ H_{\log (y_{p^{i}(k)})}(A_{p^{i}(k)-1}) \subset A_{p^{i}(k)-1}. \]
Define $M^{(i)}=y_{p^{i+1}(k)}y_{p^{i}(k)}g_i$. It follows from
\[
H_{\log (M^{(i)})}=H_{\log (y_{p^{i}(k)})}+H_{\log (y_{p^{i+1}(k)})}+ H_{\log (g_i)}
\]
that $H_{\log (M^{(i)})}(A_{p^{i}(k)-1}) \subset A_{p^{i}(k)-1}$. Through direct computation, one sees that $M^{(i)} \in A_{p^{i}(k)-1}$ since $B$ is symmetric. It follows that $M^{(i)}$ is a homogeneous Poisson element of $A_{p^{i}(k)-1}$. By Lemma \ref{prime factor}, $M^{(i)}$ is either a constant or a product of homogeneous Poisson prime elements of $A_{p^{i}(k)-1}$. Suppose $M^{(i)}=0$. Then $d^{(i)}-d^{(i+1)}=0$, which means $y_{p^i(k)}\delta(y_{p^{i+1}(k)})=y_{p^{i+1}(k)}\delta(y_{p^i(k)})$. Since $y_{p^i(k)}$ is prime in $A_{p^{i}(k)}$ and clearly $y_{p^{i+1}(k)} \notin y_{p^i(k)}A_{p^{i}(k)}$, $\delta(y_{p^i(k)}) \in y_{p^i(k)}A_{p^{i}(k)}$. Applying Theorem \ref{HPP-Poi-CGL} to
\[ \mathbf{k}[x_1;\theta_1,\delta_1] \cdots [x_{p^i(k)};\theta_{p^i(k)},\delta_{p^i(k)}][x_0;\theta,\delta]_{(\lambda_1,...,\lambda_{p^i(k)},\lambda_0)}, \]
as $\delta(y_{p^i(k)}) \neq 0$ by Lemma \ref{k-p(k)}, one sees that $y_{p^i(k)}x_0-\frac{\delta(y_{p^i(k)})}{\eta}$ is prime in $A_{p^i(k)}[x_0]$, contradicting to $\delta(y_{p^i(k)}) \in y_{p^i(k)}A_{p^{i}(k)}$. Thus, $M^{(i)}$ is nonzero. As $g_i=d^{(i)}-d^{(i+1)}$ and $M^{(i)}=y_{p^{i+1}(k)}y_{p^{i}(k)}g_i$,
\[ d^{(i)}-d^{(i+1)}=\frac{M^{(i)}}{y_{p^i(k)}y_{p^{i+1}(k)}}.\]

2) As $y_{p^{i}(k)}=y_{p^{i+1}(k)}x_{p^{i}(k)}-c_{p^{i}(k)}$, one has
\begin{align*}
M^{(i)}&=y_{p^{i+1}(k)}y_{p^{i}(k)}(d^{(i)}-d^{(i+1)})\\
&=y_{p^{i+1}(k)}y_{p^{i}(k)}d^{(i)}-y^2_{p^{i+1}(k)}x_{p^{i}(k)}d^{(i+1)}+y_{p^{i+1}(k)}c_{p^{i}(k)}d^{(i+1)}.
\end{align*}
It follows from
\[ y_{p^{i+1}(k)}y_{p^{i}(k)}d^{(i)}-y^2_{p^{i+1}(k)}x_{p^{i}(k)}d^{(i+1)} \in y_{p^{i+1}(k)}A_{p^{i}(k)-1} \]
that
\begin{equation}
\label{degree}
M^{(i)}-y_{p^{i+1}(k)}c_{p^{i}(k)}d^{(i+1)} \in y_{p^{i+1}(k)}A_{p^{i}(k)-1}.
\end{equation}
Recall from Lemma-Notation \ref{mon} that $\Gamma_{p^{i}(k)-1}=\mathbf{k}[y^{\pm 1}_1,...,y^{\pm 1}_{p^{i}(k)-1}]$ and $\Gamma'_{p^{i}(k)-1}=\mathbf{k}[y^{\pm 1}_1,...,y^{\pm 1}_{p^{i+1}(k)-1},y^{\pm 1}_{p^{i+1}(k)+1},...,y^{\pm 1}_{p^{i}(k)-1}]$. It is easy to see that
\[ \Gamma_{p^{i}(k)-1}=\Gamma'_{p^{i}(k)-1}[y^{\pm 1}_{p^{i+1}(k)}]. \]
Now consider every element in $\Gamma_{p^{i}(k)-1}$, including those involved in (\ref{degree}), as an element in $\Gamma'_{p^{i}(k)-1}[y^{\pm 1}_{p^{i+1}(k)}]$. The lowest degree term of $c_{p^{i}(k)}$ is $b_{p^{i}(k)}$ with degree 0. By part 1), the lowest degree term of $d^{(i+1)}$ is $d^{(i+1)}-d^{(i+2)}$ with degree -1. Thus, the lowest degree term of $y_{p^{i+1}(k)}c_{p^{i}(k)}d^{(i+1)}$ is $y_{p^{i+1}(k)}b_{p^{i}(k)}(d^{(i+1)}-d^{(i+2)})$ with degree 0. By (\ref{degree}),
the lowest degree term of $M^{(i)}-y_{p^{i+1}(k)}c_{p^{i}(k)}d^{(i+1)}$ has degree greater than 0. By 1) of Lemma \ref{one-one}, $M^{(i)}$ is a monomial in $\Gamma_{p^{i}(k)-1}$ because $M^{(i)}$ is either a nonzero constant or a product of homogeneous Poisson prime elements of $A_{p^{i}(k)-1}$. Hence,
\[ M^{(i)}=y_{p^{i+1}(k)}b_{p^{i}(k)}(d^{(i+1)}-d^{(i+2)}). \]
Therefore, by part 1),
\[ d^{(i)}-d^{(i+1)}=\frac{b_{p^{i}(k)}(d^{(i+1)}-d^{(i+2)})}{y_{p^{i}(k)}}. \]
\end{proof}

\section{CGL Extensions}\label{sec-CGL}

Fix a field $\mathbf{k}$ of any characteristic and let $\mathbb{T}$ be a split $\mathbf{k}$-torus. We follow the notation in $\S$\ref{subsec-not}.

\subsection{CGL extensions}\label{subsec-CGL}

For every Ore extension $S[X;\sigma,\Delta]$ appearing in $\S$\ref{sec-CGL}, $S$ is an associative $\mathbf{k}$-algebra with unity, and $\sigma,\Delta$ are $\mathbf{k}$-linear maps. As a result, $S[X;\sigma,\Delta]$ is an associative $\mathbf{k}$-algebra with unity. Recall that an action of $\mathbb{T}$ on $S$ by $\mathbf{k}$-algebra automorphisms is said to be rational if $S$, as a $\mathbf{k}$-vector space, is generated by $\mathbb{T}$-weight vectors. Suppose there is a rational $\mathbb{T}$-action on $S$ by $\mathbf{k}$-algebra automorphisms. An element $s \in S$ is said to be $\mathbb{T}$-homogeneous, or simply homogeneous, if it is a weight vector of the $\mathbb{T}$-action.

CGL extensions can be viewed as iterated Cauchon extensions, which are introduced in \cite[Definition 2.5]{LLR}. To serve the purpose of this paper, we use a slightly different definition of Cauchon extensions.

\begin{definition}
\label{T-O}
{\rm
Given an associative $\mathbf{k}$-algebra $S$ with unity, a Cauchon extension of $S$ is an Ore extension $S[X;\sigma,\Delta]$, together with a rational $\mathbb{T}$-action by $\mathbf{k}$-algebra automorphisms, satisfying

1) $S \subset S[X]$ is $\mathbb{T}$-stable and $X$ is homogeneous with weight $\lambda_0 \in \chi(\mathbb{T})$;

2) $\Delta$ is locally nilpotent;

3) there exists $t_0 \in \mathbb{T}$ such that $\sigma = t_0 \cdot |_{S}$ and $\omega:=t^{\lambda_0}_0 \in \mathbf{k}$ is not a root of unity.\\
We denote this Cauchon extension of $S$ by $S[X;\sigma,\Delta]_{\lambda_0}$.
\hfill $\diamond$
}
\end{definition}

\begin{remark}
{\rm
\label{com-t}
The $\mathbb{T}$-action on $S[X;\sigma,\Delta]$ is determined by the $\mathbb{T}$-action on $S$ and the weight $\lambda_0$ of $x$. Since the $\mathbb{T}$-action on $S$ is by $\mathbf{k}$-algebra automorphisms,
\[ t \cdot (\Delta(s))=t^{\lambda_0} \Delta(t \cdot s), \ \ s \in S, \ t \in \mathbb{T}. \]
In particular, one has
\[ \sigma \circ \Delta=\omega \Delta \circ \sigma. \]
\hfill $\diamond$
}
\end{remark}

Let $S$ be an associative $\mathbf{k}$-algebra with unity and a right Ore domain. Let $S[X;\sigma,\Delta]_{\lambda_0}$ be a Cauchon extension of $S$. By \cite[Proposition 3.4]{Cohn}, $S[X;\sigma,\Delta]_{\lambda_0}$ is still a right Ore domain. Let $\mathcal{F}$ be the division ring of fractions of $S$ with the $\mathbb{T}$-action by $\mathbf{k}$-algebra automorphisms that extends the $\mathbb{T}$-action on $S$.

\begin{definition}
\label{qdis-def}
{\rm
An element $D \in \mathcal{F}$ is said to be distinguished with respect to $S[X;\sigma,\Delta]_{\lambda_0}$ if it is homogeneous with weight $\lambda_0$ and satisfies
\begin{equation}\label{qdis1}
Dr=\sigma(r)D+\Delta(r), \ \ r \in \mathcal{F}.
\end{equation}
\hfill $\diamond$
}
\end{definition}

\begin{lemma}
\label{qdis-uni}
There is at most one distinguished element $D$ with respect to $S[X;\sigma,\Delta]_{\lambda_0}$ and it satisfies
\[ \Delta(D)=(1-\omega)D^2, \]
where $\omega$ is defined as in 2) of Definition \ref{T-O}.
\end{lemma}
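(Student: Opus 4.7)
The plan is to mimic the proof of Lemma \ref{dis-uni}, replacing the Poisson-theoretic argument with its quantum analog. The key observation is that the defining equation (\ref{qdis1}) is $\mathbf{k}$-linear in the ``distinguished'' element, so the difference of two distinguished elements satisfies a homogeneous version of (\ref{qdis1}).

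First, I would establish uniqueness. Suppose $D, D' \in \mathcal{F}$ are both distinguished with respect to $S[X;\sigma,\Delta]_{\lambda_0}$. Set $E = D - D'$. Subtracting the two instances of (\ref{qdis1}) gives
\[
Er = \sigma(r)E, \qquad r \in \mathcal{F}.
\]
Since both $D$ and $D'$ are homogeneous of weight $\lambda_0$, so is $E$. Recall that $\sigma$ extends to $\mathcal{F}$ as the action of $t_0$, so for any homogeneous element $f \in \mathcal{F}$ of weight $\lambda_0$ we have $\sigma(f) = t_0^{\lambda_0} f = \omega f$. In particular $\sigma(E) = \omega E$. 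Now plug $r = E$ into the displayed equation:
\[
E^2 = \sigma(E) E = \omega E^2,
\]
so $(1-\omega) E^2 = 0$. Since $\omega$ is not a root of unity, $1-\omega \neq 0$, hence $E^2 = 0$. As $\mathcal{F}$ is a division ring and hence a domain, $E = 0$, so $D = D'$.

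For the identity $\Delta(D) = (1-\omega) D^2$, I would substitute $r = D$ into (\ref{qdis1}):
\[
D \cdot D = \sigma(D) D + \Delta(D) = \omega D^2 + \Delta(D),
\]
again using $\sigma(D) = \omega D$. Rearranging gives $\Delta(D) = (1-\omega) D^2$.

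The only nontrivial point is justifying $\sigma(E) = \omega E$ and $\sigma(D) = \omega D$ inside $\mathcal{F}$, but this follows from the fact that the $\mathbb{T}$-action on $S$ extends to $\mathcal{F}$ by $\mathbf{k}$-algebra automorphisms and $\sigma$ is the action of $t_0$. So there is no real obstacle; the nonvanishing of $1-\omega$, guaranteed by condition 3) of Definition \ref{T-O}, is exactly what replaces the condition $\eta \neq 0$ used in the Poisson case.
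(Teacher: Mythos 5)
Your proof is correct and follows essentially the same route as the paper's: set $E = D - D'$, observe $Er = \sigma(r)E$, substitute $r = E$ and use $\sigma(E) = \omega E$ to get $(1-\omega)E^2 = 0$ hence $E=0$ in the domain $\mathcal{F}$, and then substitute $r = D$ into the defining relation to get the identity for $\Delta(D)$. The only difference is that you spell out the justification for $\sigma(E) = \omega E$ via the extended $\mathbb{T}$-action on $\mathcal{F}$, which the paper leaves implicit.
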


\begin{proof}
Assume $D' \in \mathcal{F}$ is also a distinguished element with respect to $S[D;\sigma,\Delta]_{\lambda_0}$. Then $D'$ is homogeneous with weight $\lambda_0$ and
\[ Dr=\sigma(r)D+\Delta(r), \ \ r \in \mathcal{F}. \]
Let $e=D-D'$. Then $e$ is homogeneous with weight $\lambda_0$ and $er=\sigma(r)e$ for $r \in \mathcal{F}$. Since $e^2=\sigma(e)e=\omega e^2$ and $\omega \neq 1$, one sees that $e=0$. Thus, $D=D'$. Therefore, there is at most one distinguished element $D \in \mathcal{F}$ with respect to $S[D;\sigma,\Delta]_{\lambda_0}$. Moreover, as
\[ D^2=\sigma(D)D+\Delta(D)=\omega D^2+\Delta(D), \]
one has $\Delta(D)=(1-\omega)D^2.$
\end{proof}

We now turn to iterated Cauchon extensions. Given an iterated Ore extension
\[ S=\mathbf{k}[X_1,\sigma_1,\Delta_1]\cdots [X_n;\sigma_n,\Delta_n], \]
denote by $S_j$ the subalgebra of $S$ generated by $X_1,...,X_{j}$ and set $S_0 = \mathbf{k}$.

\begin{definition}
\cite[Definition 1]{QQ} \cite[Definition 4.1]{QO} \cite[Definition 3.3]{QQ2}
\label{T-CGL}
{\rm
A length $n$ CGL extension is an iterated Ore extension $S=\mathbf{k}[X_1,\sigma_1,\Delta_1]\cdots [X_n;\sigma_n,\Delta_n]$, together with a rational $\mathbb{T}$-action by $\mathbf{k}$-algebra automorphisms, such that for each $j \in [1, n]$,

1) $X_j$ is homogeneous with weight $\lambda_j \in \chi(\mathbb{T})$;

2) $\Delta_j$ is locally nilpotent;

3) there exists $t_j \in \mathbb{T}$ such that $\omega_j:=t^{\lambda_j}_j \in \mathbf{k}$ is not a root of unity and
\[ \sigma_{j}=t_j \cdot |_{S_{j-1}}.\]
As the $\mathbb{T}$-action on $S$ is determined by $\lambda_1, \ldots, \lambda_n$, we also denote the algebra $S$ by
\[
S=\mathbf{k}[X_1,\sigma_1,\Delta_1]\cdots [X_n;\sigma_n,\Delta_n]_{(\lambda_1, \ldots, \lambda_n)}.
\]
\hfill $\diamond$
}
\end{definition}

A CGL extension is thus a sequence
\[
\mathbf{k} = S_0 \subset S_1 \subset \cdots \subset S_n = S
\]
of associative algebras such that $S_j = S_{j-1}[X_j;\sigma_j,\Delta_j]_{\lambda_j}$ is a Cauchon extension of $S_{j-1}$ for $j \in [1,n]$.

\begin{definition}
\cite{QQ} \cite[$\S$2.1]{QO} \cite[$\S$1.3]{QQ2}
An element $r$ in a domain $R$ is called a prime element if $r$ is a normal element (i.e., $rR=Rr$), and the principal ideal $rR$ is a complete prime ideal (i.e., $R/(rR)$ is a domain).
\end{definition}

A main tool in the study of CGL extensions by K. Goodearl and M. Yakimov is the set of homogeneous prime elements of the nested
sequence of CGL extensions
\[
S_1 \subset \cdots \subset S_n = S.
\]
For a CGL extension $S=\mathbf{k}[X_1,\sigma_1,\Delta_1]\cdots [X_n;\sigma_n,\Delta_n]_{(\lambda_1, \ldots, \lambda_n)}$, and for $j \in [1, n]$, let $P_j \subset S_j$ be the set of all homogeneous prime elements of $S_j$, and let $Q_j \subset P_j$ be the set of elements in $P_j$ that are not in $S_{j-1}$. Clearly both $P_j$ and $Q_j$ are invariant under multiplication by scalars in
$\mathbf{k}^\times$. Define $P_0 = Q_0 = \mathbf{k}^\times$. The following theorem summarizes some results from \cite{QQ}, \cite[$\S$3 $\S$4]{QO}, and \cite[$\S$3]{QQ2}.

\begin{theorem}
\label{HP-CGL}
Let $S=\mathbf{k}[X_1,\sigma_1,\Delta_1]\cdots [X_n;\sigma_n,\Delta_n]_{(\lambda_1, \ldots, \lambda_n)}$ be a CGL extension. For $j \in [1,n]$, let $\omega_j$ be as in 3) of Definition \ref{T-CGL}. Then for each $j \in [1, n]$, $|Q_j/\mathbf{k}^\times|=1$, and
\[
P_j =Q_j \sqcup \bigsqcup_{i \in I_j} Q_i,
\]
where $I_j =  \{1 \leq i \leq j-1: \ \Delta_{i+1}|_{Q_i} = \cdots = \Delta_j|_{Q_i} = 0\}$. Moreover, there is a unique sequence $Y_S = \{Y_1,...,Y_n\}$ with $Y_j \in Q_j$ for $j \in [1, n]$, which is determined inductively as follows: $Y_0 = 1 \in Q_0$, and for $j \geq 1$,
\[
 Y_j=Y_{p(j)}Y_j-C_j=Y_{p(j)}X_j+(1-\omega_{j})^{-1}(\Delta_{j} \circ \sigma_{j}^{-1})(Y_{p(j)})=Y_{p(j)}(X_j-D_j),
\]
where $D_j$ is the distinguished element with respect to $S_{j-1}[X_j;\sigma_j,\Delta_j]_{\lambda_j}$. Here, $p(j)=0$ when $\Delta_j=0$, and when $\Delta_j \neq 0$, $p(j)$ is an (necessarily unique) integer in $[1, j-1]$ such that $y_{p(j)} \in P_{j-1}$ and $\Delta_j(y_{p(j)}) \neq 0$.
\end{theorem}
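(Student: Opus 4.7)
The plan is to proceed by induction on $j$, closely paralleling the proof of Theorem \ref{HPP-Poi-CGL} in the Poisson setting. The base case $j = 0$ holds trivially with $Y_0 = 1$ and $P_0 = Q_0 = \mathbf{k}^\times$. For the inductive step, I assume the statement for $S_{j-1}$, so the homogeneous primes $P_{j-1}$ are described and the sequence $Y_1, \dots, Y_{j-1}$ is constructed. I then pass to the Cauchon extension $S_j = S_{j-1}[X_j;\sigma_j,\Delta_j]_{\lambda_j}$ and split into two cases based on whether $\Delta_j$ annihilates every element of $P_{j-1}$.

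In the first case (in particular when $\Delta_j = 0$), the variable $X_j$ itself is $\sigma_j$-normal and homogeneous in $S_j$, and a standard skew polynomial argument over the domain $S_{j-1}$ gives that $X_j$ generates a completely prime ideal, so $X_j \in Q_j$; I set $p(j) = 0$ and $Y_j = X_j$. In the second case, $\Delta_j(Y_i) \neq 0$ for some $Y_i \in P_{j-1}$. Here I first construct a distinguished element $D_j$ in the division ring of fractions $\mathcal{F}_{j-1}$ of $S_{j-1}$: the unique homogeneous element of weight $\lambda_j$ satisfying $D_j r = \sigma_j(r) D_j + \Delta_j(r)$ for every $r \in \mathcal{F}_{j-1}$ (uniqueness is Lemma \ref{qdis-uni}). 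Granted $D_j$, the element $X'_j := X_j - D_j \in \mathcal{F}_{j-1}[X_j]$ is $\sigma_j$-normal inside the skew polynomial extension. I then single out the unique $p(j) \in [1, j-1]$ with $Y_{p(j)} \in P_{j-1}$ and $\Delta_j(Y_{p(j)}) \neq 0$ such that $Y_j := Y_{p(j)} X'_j$ clears denominators and lies in $S_j$; a direct rearrangement yields the explicit formula $Y_j = Y_{p(j)} X_j + (1-\omega_j)^{-1}(\Delta_j \circ \sigma_j^{-1})(Y_{p(j)})$, and normality together with the complete primeness of $Y_{p(j)}$ give $Y_j \in Q_j$.

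To classify $P_j$, I need to decide which $Y_i \in P_{j-1}$ survive as primes in $S_j$. The claim is that $Y_i$ remains prime iff $\Delta_j(Y_i) = 0$: the forward direction uses normality of $Y_i S_j$ together with the quasi-commutation $\sigma_j \circ \Delta_j = \omega_j \Delta_j \circ \sigma_j$ from Remark \ref{com-t} and the hypothesis that $\omega_j$ is not a root of unity, which forces any nonzero $\Delta_j(Y_i)$ to break homogeneity modulo $Y_i$. Iterating this criterion for the extensions $S_i \subset S_{i+1} \subset \cdots \subset S_j$ recovers exactly the set $I_j$ and the decomposition $P_j = Q_j \sqcup \bigsqcup_{i \in I_j} Q_i$. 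The uniqueness $|Q_j/\mathbf{k}^\times| = 1$ follows by a leading-coefficient-in-$X_j$ argument: any $Y \in Q_j$ must have positive $X_j$-degree, and its leading coefficient is a homogeneous normal element of $S_{j-1}$ which, by the inductive hypothesis $|Q_{p(j)}/\mathbf{k}^\times| = 1$, must be a scalar multiple of $Y_{p(j)}$; uniqueness of $D_j$ then pins down $Y$ up to $\mathbf{k}^\times$.

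The hard part is the construction of the distinguished element $D_j \in \mathcal{F}_{j-1}$ and the verification that $D_j r = \sigma_j(r) D_j + \Delta_j(r)$ holds on all of $\mathcal{F}_{j-1}$, not merely on a generating set. The natural candidate is determined on $Y_{p(j)}$ by the forced identity $Y_{p(j)} D_j = -(1-\omega_j)^{-1}(\Delta_j \circ \sigma_j^{-1})(Y_{p(j)})$, and it must then be propagated to the rest of $\mathcal{F}_{j-1}$ using the rationality of the $\mathbb{T}$-action, the local nilpotence of $\Delta_j$, and the structural description of homogeneous elements supplied by the induction. This is precisely the step in which all three CGL axioms --- rational torus action, $\omega_j$ not a root of unity, and local nilpotence of $\Delta_j$ --- enter in an essential way, and it is where I expect the bulk of the technical work to lie.
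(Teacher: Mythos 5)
The paper does not prove Theorem \ref{HP-CGL}: it introduces it with the sentence ``The following theorem summarizes some results from \cite{QQ}, \cite[$\S$3 $\S$4]{QO}, and \cite[$\S$3]{QQ2},'' i.e., it is quoted from the Goodearl--Yakimov literature without proof. There is therefore no ``paper's own proof'' to compare against, and your proposal should be measured against those references instead.

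Taken on its own terms, your outline tracks the broad strategy of \cite{QO} and \cite{QQ2} (induction on the length of the iterated Cauchon extension, passage to the division ring of fractions, a distinguished element $D_j$ satisfying the twisted-commutation identity, and a leading-coefficient argument in $X_j$). But it is a roadmap rather than a proof, and you acknowledge as much. Two places deserve a warning. First, your case split --- ``whether $\Delta_j$ annihilates every element of $P_{j-1}$'' --- is not the same as ``$\Delta_j=0$ versus $\Delta_j\neq 0$''; the implication that a nonzero locally nilpotent $\Delta_j$ must be nonzero on some $Y_i\in P_{j-1}$ is itself a nontrivial consequence of the noncommutative UFD structure of $S_{j-1}$ (established in \cite{LLR} and exploited heavily in \cite{QO}), and if you open the first case with ``in particular when $\Delta_j=0$'' and then assert $X_j$ is $\sigma_j$-normal, that assertion is false whenever $\Delta_j\neq 0$, so the case boundary has to be where $\Delta_j=0$, not where $\Delta_j|_{P_{j-1}}=0$. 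Second, the construction of $D_j$ is not a matter of ``propagating'' the defining identity from $Y_{p(j)}$ to all of $\mathcal{F}_{j-1}$: in the references one shows directly that $\Delta_j(Y_{p(j)})$ is a homogeneous normal element of $S_{j-1}$, factors it through the UFD structure, and uses the identity $\sigma_j\circ\Delta_j=\omega_j\,\Delta_j\circ\sigma_j$ together with $\omega_j$ not being a root of unity to solve the resulting recursion. That argument does not reduce to Lemma \ref{qdis-uni}, which only gives uniqueness and the relation $\Delta_j(D_j)=(1-\omega_j)D_j^2$ once existence is known. As written, the hardest step is a declared gap, so this cannot be accepted as a proof.
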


\begin{definition}
\label{qY_S}
{\rm
We refer to the sequence $Y_S = \{Y_1, \ldots, Y_n\}$ in Theorem \ref{HP-CGL} as the sequence of homogeneous prime elements of $S$.
\hfill $\diamond$
}
\end{definition}

\begin{definition}
\label{qlevel}
{\rm
Note that the map $p$ has the same properties as that in Theorem \ref{HPP-Poi-CGL}. We define the level sets of $S$ as in Definition \ref{level}.
\hfill $\diamond$
}
\end{definition}

Let $S=\mathbf{k}[X_1,\sigma_1,\Delta_1]\cdots [X_n;\sigma_n,\Delta_n]_{(\lambda_1, \ldots, \lambda_n)}$ be a CGL extension with the sequence of homogeneous prime elements $Y_S = \{Y_1, \ldots, Y_n\}$. Note that the map $p$, defined as in Theorem \ref{HP-CGL} for $S$, is injective when restricted to $\{ j \in [1, n]: \Delta_j \neq 0\}$. The following lemma follows directly from Theorem \ref{HP-CGL}.

\begin{lemma}
\label{qone-one}
1) For each level set $K$, let $k_{max}$ be the largest integer in $K$. Then $Y_{k_{max}}$ is a homogeneous prime element of $S$. Conversely, any homogeneous prime element of $S$ is a scalar multiple of $Y_{k_{max}}$ for some level set $K$. 

2) Define the $\mathbb{N}^n$-valued degrees for elements in $S$ as in $\S$\ref{subsec-not}. For $j \in [1,n]$, let $f_j=(f_{j,1},...,f_{j,n}) \in \mathbb{N}^n$ be the degree of $Y_{j}$ and let $K_j$ be the level set that contains $j$. For $j,l \in [1,n]$, $f_{j,l}=1$ if $l \in [1,j] \cap K_j$ and $f_{j,l}=0$ otherwise..
\end{lemma}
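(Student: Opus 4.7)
The plan is to reduce everything to Theorem \ref{HP-CGL}, closely mirroring the argument for Lemma \ref{one-one} in the Poisson setting. The lemma is asserted to follow ``directly'' from Theorem \ref{HP-CGL}, and indeed both parts come out by routine bookkeeping once the chain structure of level sets is made explicit. The key structural observation is that, since $p$ is injective on $\{j \in [1,n] : \Delta_j \neq 0\}$, each level set of $S$ is a linearly ordered chain $k_1 < k_2 < \cdots < k_s$ with $p(k_1) = 0$ and $p(k_{t+1}) = k_t$ for $t \in [1, s-1]$. I would record this at the start.

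For part 1, Theorem \ref{HP-CGL} gives $P_n = Q_n \sqcup \bigsqcup_{i \in I_n} Q_i$ with $Q_j = \mathbf{k}^\times Y_j$, so the homogeneous prime elements of $S$ are precisely the scalar multiples of $Y_j$ for $j \in \{n\} \cup I_n$. I then show $\{n\} \cup I_n$ equals the set of maxima of the level sets of $S$. If $k$ is the maximum of its level set and $k < n$, I argue by contradiction: suppose $\Delta_l(Y_k) \neq 0$ for some $l \in [k+1, n]$, and let $l$ be minimal with this property. The minimality together with Theorem \ref{HP-CGL} applied to $S_{l-1}$ forces $Y_k \in P_{l-1}$, so by uniqueness of $p(l)$ we get $p(l) = k$, putting $l$ in the level set of $k$ and contradicting maximality. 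Conversely, if $k \in I_n$ with $k < n$ is not the maximum of its level set, then the chain structure produces an $l > k$ with $p(l) = k$ and $l \leq n$, so $\Delta_l(Y_k) \neq 0$, contradicting $k \in I_n$.

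For part 2, I induct on $j$. The base case $j=1$ is $Y_1 = X_1$ with degree $e_1$. For the inductive step, Theorem \ref{HP-CGL} gives $Y_j = Y_{p(j)} X_j - C_j$ with $C_j = (1-\omega_j)^{-1}(\Delta_j \circ \sigma_j^{-1})(Y_{p(j)}) \in S_{j-1}$. If $p(j) = 0$, then $Y_j = X_j$ and $j$ is the smallest element of $K_j$, so $[1,j] \cap K_j = \{j\}$ and the degree $e_j$ matches. If $p(j) \geq 1$, the inductive hypothesis says the degree of $Y_{p(j)}$ is supported exactly on $[1, p(j)] \cap K_{p(j)}$ with all entries $1$; multiplying by $X_j$ adds $e_j$, while $C_j \in S_{j-1}$ contributes strictly lower degree, so the degree of $Y_j$ is $f_{p(j)} + e_j$. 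Since $K_j = K_{p(j)}$ and $j$ is the immediate successor of $p(j)$ in this chain, $[1,j] \cap K_j = ([1,p(j)] \cap K_{p(j)}) \cup \{j\}$, matching the claim.

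There is no real obstacle; the main care is in establishing the chain structure of the level sets from the injectivity of $p$, and in verifying that taking the minimal $l$ with $\Delta_l(Y_k) \neq 0$ really does force $Y_k \in P_{l-1}$ (which requires applying the decomposition of $P_j$ from Theorem \ref{HP-CGL} at each intermediate stage $j \in [k, l-1]$).
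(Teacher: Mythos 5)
Your proof is correct and takes precisely the approach the paper intends: the paper offers no written proof, stating only that the lemma ``follows directly from Theorem~\ref{HP-CGL},'' and your argument is the natural unpacking of that remark. The chain description of level sets, the identification of $\{n\}\cup I_n$ with the set of level-set maxima via the uniqueness of $p(l)$, and the degree induction using $Y_j = Y_{p(j)}X_j - C_j$ with $C_j \in S_{j-1}$ of strictly lower degree are all the intended steps.
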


We state some result from \cite[Theorem 1.2]{QO} \cite[Proposition 3.11]{QQ2} in the following lemma:

\begin{lemma}
\label{log-can-qy}
For $i,j \in [1,n]$, $Y_iY_j=q_{i,j} Y_jY_i$, for some $q_{i,j} \in \mathbf{k}$.
\end{lemma}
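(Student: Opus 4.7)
The plan is to prove the statement by induction on $j$: for each $j \in [1,n]$ and each $i < j$, I will show that $Y_iY_j = q_{i,j}\, Y_jY_i$ for some $q_{i,j} \in \mathbf{k}^\times$, assuming the analogous statement for all pairs inside $[1, j-1]$. The base case $j=1$ is vacuous.

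The main tool for the inductive step is the factorization $Y_j = Y_{p(j)}(X_j - D_j)$ from Theorem \ref{HP-CGL} (with the convention $Y_0 = 1$ and $D_j = 0$ when $\Delta_j = 0$), together with the key identity
\[
(X_j - D_j)\, r \;=\; \sigma_j(r)\,(X_j - D_j), \qquad r \in S_{j-1},
\]
valid inside the division ring of fractions $\mathcal{F}_j$ of $S_j$. This identity is immediate upon subtracting the distinguished-element relation $D_j r = \sigma_j(r) D_j + \Delta_j(r)$ of (\ref{qdis1}) from the Ore relation $X_j r = \sigma_j(r) X_j + \Delta_j(r)$. Specializing to $r = Y_i \in S_{j-1}$ and using $\sigma_j = t_j \cdot |_{S_{j-1}}$ together with the $\mathbb{T}$-homogeneity of $Y_i$ (say of weight $\mu_i \in \chi(\mathbb{T})$), one has $\sigma_j(Y_i) = t_j^{\mu_i} Y_i$, so
\[
Y_jY_i \;=\; Y_{p(j)}(X_j - D_j)\,Y_i \;=\; t_j^{\mu_i}\, Y_{p(j)} Y_i (X_j - D_j).
\]
The inductive hypothesis (applied when both $p(j)$ and $i$ lie in $[1, j-1]$) provides a scalar $c \in \mathbf{k}^\times$ with $Y_{p(j)} Y_i = c\, Y_i Y_{p(j)}$; in the degenerate cases $p(j) = 0$ or $i = p(j)$ this holds trivially with $c = 1$. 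Substituting back yields $Y_jY_i = t_j^{\mu_i} c\, Y_i Y_j$, which is the desired quasi-commutation.

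The only real subtlety is that $D_j$ lies in $\mathcal{F}_{j-1}$ rather than in $S_{j-1}$, so the intermediate expressions live in $\mathcal{F}_j$ rather than in $S_j$. This is harmless because $S_j$ is a right Ore domain by \cite[Proposition 3.4]{Cohn} and $X_j - D_j = Y_{p(j)}^{-1} Y_j$ is a unit in $\mathcal{F}_j$; moreover, the two sides of the final scalar identity both lie in $S_j$ by construction, so the identity persists there. I do not anticipate a substantial obstacle: once the identity $(X_j - D_j)r = \sigma_j(r)(X_j - D_j)$ is in hand, the rest is bookkeeping with Theorem \ref{HP-CGL} and the $\mathbb{T}$-equivariance of $\sigma_j$ supplied by condition 3) of Definition \ref{T-CGL}.
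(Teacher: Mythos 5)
The paper does not prove this lemma at all: it is stated with the remark ``We state some result from \cite[Theorem 1.2]{QO} \cite[Proposition 3.11]{QQ2} in the following lemma,'' i.e.\ it is imported wholesale from Goodearl--Yakimov. So there is no paper proof to compare against; what you have done is supply a self-contained argument, and it is correct. The mechanism --- subtracting the distinguished-element relation \eqref{qdis1} from the Ore relation to get the twisted normality $(X_j - D_j)\,r = \sigma_j(r)\,(X_j - D_j)$ for $r \in \mathcal{F}_{j-1}$, then inducting along the factorization $Y_j = Y_{p(j)}(X_j - D_j)$ with $\sigma_j(Y_i) = t_j^{\mu_i}Y_i$ supplied by condition 3) of Definition \ref{T-CGL} and the $\mathbb{T}$-homogeneity of $Y_i$ --- is exactly the right one, and your handling of the degenerate cases $p(j)=0$ (so $Y_{p(j)}=1$, $D_j=0$) and $i=p(j)$ is fine. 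The remark about passing through $\mathcal{F}_j$ and then observing that the final identity between two elements of $S_j$ descends back to $S_j$ is the correct way to justify working with $D_j \in \mathcal{F}_{j-1}$. One minor point worth making explicit in a write-up: the scalar $q_{j,i} = t_j^{\mu_i} c$ is automatically nonzero because $S_j$ is a domain and $Y_i, Y_j \neq 0$, which is needed to pass from $Y_jY_i = q_{j,i}Y_iY_j$ to $Y_iY_j = q_{j,i}^{-1}Y_jY_i$ (and to cover $i>j$ by $q_{i,j} = q_{j,i}^{-1}$, with $q_{i,i}=1$).
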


By Theorem \ref{HP-CGL}, for $j \in [1,n]$, $Y_j = Y_{p(j)}X_j - C_j$, where $C_j \in S_{j-1}$. It follows that the algebra $S$ is contained in the quantum $\mathbf{k}$-torus generated by $Y_S$, i.e.,
\[ S \subset \Gamma_{\mathbf{q}}=\frac{\mathbf{k}<Y^{\pm 1}_1,...,Y^{\pm 1}_n>}{(Y_iY_j-q_{i,j}Y_jY_i)}. \]

\begin{definition}
\label{qtor}
{\rm
The algebra $\Gamma_{\mathbf{q}}$ is called the quantum torus of $S$.
\hfill $\diamond$
}
\end{definition}

Similar to the Poisson case, we have the following lemma-notation.

\begin{lemma-notation}
\label{qmon}
For $k \in [1,n]$ such that $p(k) \neq 0$ (or equivalently $\Delta_k \neq 0$), let $\Gamma_{\mathbf{q},k-1}$ be the quantum $\mathbf{k}$-torus generated by $Y_1,...,Y_{k-1}$ and let $\Gamma'_{\mathbf{q},k-1}$ be the quantum $\mathbf{k}$-torus generated by $Y_1,...,Y_{p(k)-1},Y_{p(k)+1},...,Y_{k-1}$. Then $C_k \in \Gamma'_{\mathbf{q},k-1}[Y_{p(k)}]$, the constant term of which, denoted by $B_k$, is a nonzero monomial in $\Gamma'_{\mathbf{q},k-1}$.
\end{lemma-notation}

\begin{proof}
For $j \in [1,n]$, let $\omega_j$ be as in 3) of Definition \ref{T-CGL}. By Theorem \ref{HP-CGL}, for $l \in [1,k-1]$,
\[ X_l=Y^{-1}_{p(l)}(Y_l+C_l), \ \text{where} \ C_l \in S_{l-1}. \]
Recall that the map $p$, when restricted to $\{ k \in [1, n]: \Delta_k \neq 0\}$, is injective. Since $\Delta_k \neq 0$, $p(k) \neq p(l)$ for any $l \in [1,k-1]$. As $S_{k-1}$ is generated by those $X_1,...,X_{k-1}$, $S_{k-1} \subset \Gamma'_{\mathbf{q},k-1}[Y_{p(k)}]$. Because $C_k \in S_{k-1}$, $C_k \in \Gamma'_{\mathbf{q},k-1}[Y_{p(k)}]$. 

Since $\Delta_k \neq 0$, by Theorem \ref{HP-CGL}, $C_k \neq 0$. Assume $B_k=0$. Then $C_k=Y_{p(k)}s_1s^{-1}_2$, where $s_1,s_2 \in S_{k-1}$ and $s_2 \notin Y_{p(k)}S_{k-1}$. One sees that $C_ks_2 \in Y_{p(k)}S_{k-1}$. As $Y_{p(k)}$ is prime in $S_{k-1}$ and $s_2 \notin Y_{p(k)}S_{k-1}$, one has $C_k \in Y_{p(k)}S_{k-1}$, which contradicts to $Y_k=Y_{p(k)}X_k+C_k$ being prime in $S_k$. Thus, $B_k \neq 0$.

By Theorem \ref{HP-CGL}, the distinguished element with respect to $S_{k-1}[X_k;\sigma_k,\Delta_k]_{\lambda_k}$ is
\[ D_k=Y^{-1}_{p(k)}C_k. \]
Since $B_k$ is the constant term of $C_k \in \Gamma'_{\mathbf{q},k-1}[Y_{p(k)}]$, $Y^{-1}_{p(k)}B_k$ is a monomial term of $D_k \in \Gamma'_{\mathbf{q},k-1}[Y^{\pm 1}_{p(k)}]$ and
\[ D_k-Y^{-1}_{p(k)}B_k \in \Gamma'_{\mathbf{q},k-1}[Y_{p(k)}]. \]
For $l \in [1,k-1]$, by (\ref{qdis1}),
\[ D_kY_l-\sigma_k(Y_l)D_k=\Delta_k(Y_l) \in S_{k-1} \subset \Gamma'_{\mathbf{q},k-1}[Y_{p(k)}]. \]
It is easy to see that $\Gamma'_{\mathbf{q},k-1}[Y_{p(k)}]$ is a subalgebra of $\Gamma_{\mathbf{q},k-1}$, so for $l \in [1,k-1]$,
\[ (D_k-Y^{-1}_{p(k)}B_k)Y_l-\sigma_k(Y_l)(D_k-Y^{-1}_{p(k)}B_k) \in \Gamma'_{\mathbf{q},k-1}[Y_{p(k)}]. \]
Hence, for $l \in [1,k-1]$,
\[ (Y^{-1}_{p(k)}B_k)Y_l-\sigma_k(Y_l)(Y^{-1}_{p(k)}B_k) \in \Gamma'_{\mathbf{q},k-1}[Y_{p(k)}]. \]
Therefore,
\begin{equation}
\label{equ-qmon-1}
(Y^{-1}_{p(k)}B_k)Y_l=\sigma_k(Y_l)(Y^{-1}_{p(k)}B_k), \ \ l \in [1,k-1] \setminus \{p(k)\}.
\end{equation}
By Theorem \ref{HP-CGL}, one has $\Delta_k(Y_{p(k)})=(\omega_k-1)\sigma_{k}(Y_{p(k)})D_k$. By (\ref{qdis1}),
\[ D_kY_{p(k)}=\omega_k\sigma_{k}(Y_{p(k)})D_k. \]
As $Y^{-1}_{p(k)}B_k$ is a monomial term of $D_k \in \Gamma'_{\mathbf{q},k-1}[Y^{\pm 1}_{p(k)}]$,
\begin{equation}
\label{equ-qmon-2}
(Y^{-1}_{p(k)}B_k)Y_{p(k)}=\omega_k\sigma_{k}(Y_{p(k)})(Y^{-1}_{p(k)}B_k).
\end{equation}
Any monomial term of $B_k \in \Gamma'_{\mathbf{q},k-1}$ satisfies (\ref{equ-qmon-1}) and (\ref{equ-qmon-2}) by replacing $B_k$. By \cite[Lemma 8.14]{QQ2}, $B_k$ has to be a monomial in $\Gamma'_{\mathbf{q},k-1}$.
\end{proof}

\subsection{Symmetric CGL extensions}\label{subsec-sym-CGL}

\begin{definition}
\cite[Definition 3]{QQ} \cite[Definition 6.2]{QO} \cite[Definition 3.12]{QQ2}
\label{qsym}
{\rm
Let $S=\mathbf{k}[X_1;\sigma_1,\Delta_1] \cdots [X_n;\sigma_n,\Delta_n]_{(\lambda_1,...,\lambda_n)}$ be a CGL extension. Then $S$ is said to be symmetric if

1) for $1 \leq i <j \leq n$, $\Delta_j(X_i) \in S_{[i+1,j-1]}$;

2) there exist $t'_1,...,t'_n \in \mathbb{T}$ such that

\indent \indent a) $\sigma_j(X_i)=(t'_i)^{-\lambda_j}X_i$ for $1 \leq i < j \leq n$, 

\indent \indent b) $(t'_j)^{\lambda_j} \in \mathbf{k}$ is not a root of unity for $j \in [1,n]$.\\
Here, $S_{[i+1,j-1]}$ is the subalgebra of $S$ generated by $X_{i+1},...,X_{j-1}$.
\hfill $\diamond$
}
\end{definition}

Let $S'=\mathbf{k}[X_0;\sigma_0,\Delta_0][X_1;\sigma_1,\Delta_1]\cdots [X_n;\sigma_n,\Delta_n]_{(\lambda_0,\lambda_1,...,\lambda_n)}$ be a symmetric CGL extension. Then the subalgebra
\[ S:=\mathbf{k}[X_1;\sigma_1,\Delta_1]\cdots [X_n;\sigma_n,\Delta_n]_{(\lambda_1,...,\lambda_n)} \]
is also a symmetric CGL extension. Let $p$ be the map for $S$ as in Theorem \ref{HP-CGL}. Let $Y_S=(Y_1,...,Y_n)$ be the sequence of homogeneous prime elements of $S$ and let $\Gamma_{\mathbf{q}}$ be the quantum torus of $S$.

Note that $S'$ can be viewed as a Cauchon extension $S'=S[X_0;\sigma,\Delta]_{\lambda_0}$ of $S$, where
\[ \sigma(X_j)X_0=X_j\sigma^{-1}_j(X_0) \ \ \text{and} \ \ \Delta(X_j)=-(\Delta_j \circ \sigma^{-1}_j)(X_0), \ \ j \in [1,n].\]
Let $t_0,t_1,...,t_n$ be in $\mathbb{T}$ such that they satisfy 3) of Definition \ref{T-CGL} for $S'$. Let $t'_0,t'_1,...,t'_n$ be in $\mathbb{T}$ such that they satisfy 2) of Definition \ref{qsym} for $S'$. Then for $j \in [1,n]$, $S_{j}[X_0;\sigma,\Delta]_{\lambda_0}$ (here $\sigma$ and $\Delta$ are their restrictions on $S_j$) is a Cauchon extension of $S_j$, for which $t'_0$ satisfies 2) of Definition \ref{T-O}. Furthermore,
\[ \mathbf{k}[X_1;\sigma_1,\Delta_1]\cdots [X_j;\sigma_j,\Delta_j][X_0;\sigma,\Delta]_{(\lambda_1,...,\lambda_j,\lambda_0)} \]
is a CGL extension, for which $t_1,...,t_j,t'_0$ satisfy 3) of Definition \ref{T-CGL}.

\begin{lemma}
\label{qk-p(k)}
For $k \in [1,n]$ such that $p(k) \neq 0$, $\Delta(Y_k)=0$ if and only if $\Delta(Y_{p(k)})=0$.
\end{lemma}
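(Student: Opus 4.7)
The statement is the quantum analogue of Lemma \ref{k-p(k)}, and my plan is to transcribe that proof to the noncommutative setting, with Theorem \ref{HP-CGL} replacing Theorem \ref{HPP-Poi-CGL} and the $\sigma$-derivation identity $\Delta(ab) = \sigma(a)\Delta(b) + \Delta(a)b$ replacing the Poisson Leibniz rule. For the forward implication $\Delta(Y_k) = 0 \Rightarrow \Delta(Y_{p(k)}) = 0$, I start from $Y_k = Y_{p(k)} X_k - C_k$ with $C_k \in S_{k-1}$, given by Theorem \ref{HP-CGL}, and apply $\Delta$ to obtain
\[ \Delta(Y_k) = \sigma(Y_{p(k)})\Delta(X_k) + \Delta(Y_{p(k)})X_k - \Delta(C_k). \]
By symmetry of $S'$, $\Delta(X_k) = -(\Delta_k \circ \sigma_k^{-1})(X_0) \in S_{k-1}$, since $\sigma_k^{-1}(X_0) \in \mathbf{k}^\times X_0$ and $\Delta_k(X_0) \in S_{[1,k-1]}$. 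A short induction using $\sigma(S_{k-1}) = S_{k-1}$ and $\Delta(X_i) \in S_{i-1}$ for $i \in [1,k-1]$ shows $\Delta(S_{k-1}) \subset S_{k-1}$, hence $\Delta(C_k) \in S_{k-1}$. Since $\sigma(Y_{p(k)}) \in S_{p(k)} \subset S_{k-1}$, the assumption $\Delta(Y_k) = 0$ forces $\Delta(Y_{p(k)})X_k \in S_{k-1}$, and the freeness of the Ore extension $S_{k-1}[X_k;\sigma_k,\Delta_k]$ as a left $S_{k-1}$-module yields $\Delta(Y_{p(k)}) = 0$.

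For the converse I argue by contradiction, assuming $\Delta(Y_{p(k)}) = 0$ while $\Delta(Y_k) \neq 0$. Applying Theorem \ref{HP-CGL} to the length-$(k+1)$ CGL extension $\mathbf{k}[X_1;\sigma_1,\Delta_1] \cdots [X_k;\sigma_k,\Delta_k][X_0;\sigma,\Delta]_{(\lambda_1,\ldots,\lambda_k,\lambda_0)}$, since $Y_k \in Q_k$ and $\Delta(Y_k) \neq 0$, the uniqueness of its $p$-map at the top step forces $\Delta$ to vanish on $P_k \setminus \mathbf{k}^\times Y_k$. A combinatorial check with the partition $P_j = Q_j \sqcup \bigsqcup_{i \in I_j} Q_i$ of Theorem \ref{HP-CGL}, split on $p(k) = k-1$ versus $p(k) < k-1$, establishes $P_{k-1} = (P_k \setminus \mathbf{k}^\times Y_k) \cup \mathbf{k}^\times Y_{p(k)}$. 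Combined with $\Delta(Y_{p(k)}) = 0$, this shows $\Delta$ vanishes on all of $P_{k-1}$, and a further application of Theorem \ref{HP-CGL} to $\mathbf{k}[X_1;\sigma_1,\Delta_1] \cdots [X_{k-1};\sigma_{k-1},\Delta_{k-1}][X_0;\sigma,\Delta]$ then forces $\Delta|_{S_{k-1}} = 0$ and identifies both $X_0$ and $Y_{p(k)}$ as homogeneous prime elements of $S_{k-1}[X_0]$.

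To close the contradiction, I invoke the symmetry of $S'$ to reorder the algebra as the CGL extension $\mathbf{k}[X_0;\sigma_0,\Delta_0][X_1;\sigma_1,\Delta_1] \cdots [X_k;\sigma_k,\Delta_k]$. Since $X_0$ and $Y_{p(k)}$ are both homogeneous primes of $S_{k-1}[X_0]$ and $\Delta_k(Y_{p(k)}) \neq 0$ (because $p(k) \neq 0$), the uniqueness of the $p$-map of this reordered extension at its top step forces $\Delta_k(X_0) = 0$. Because $\sigma_k^{-1}(X_0) \in \mathbf{k}^\times X_0$, this gives $\Delta(X_k) = -(\Delta_k \circ \sigma_k^{-1})(X_0) = 0$; combined with $\Delta|_{S_{k-1}} = 0$, the derivation $\Delta$ vanishes on all of $S_k$, contradicting $\Delta(Y_k) \neq 0$. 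The main obstacle I anticipate is the combinatorial identity $P_{k-1} = (P_k \setminus \mathbf{k}^\times Y_k) \cup \mathbf{k}^\times Y_{p(k)}$, which requires careful tracking of the index sets $I_j$; the remaining steps are a faithful noncommutative transcription of the proof of Lemma \ref{k-p(k)}, with $\sigma$-derivation bookkeeping replacing the Poisson-derivation bookkeeping.
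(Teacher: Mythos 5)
Your proof is correct and follows essentially the same route as the paper: the forward implication uses the $\sigma$-derivation expansion of $\Delta(Y_{p(k)}X_k - C_k)$ together with symmetry to force $\Delta(Y_{p(k)})X_k \in S_{k-1}$, and the converse applies Theorem~\ref{HP-CGL} to the three reindexed CGL presentations exactly as in the paper's argument. Your added bookkeeping (the induction showing $\Delta(S_{k-1}) \subset S_{k-1}$, the explicit appeal to left-module freeness of the Ore extension, and the case split establishing $P_{k-1} = (P_k \setminus \mathbf{k}^\times Y_k) \cup \mathbf{k}^\times Y_{p(k)}$) merely fills in detail that the paper leaves implicit.
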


\begin{proof}
1) By Theorem \ref{HP-CGL}, $Y_k=Y_{p(k)}X_k-C_k$, where $C_k \in S_{k-1}$. Then
\[ \Delta(Y_k)=\Delta(Y_{p(k)}X_k-C_k)=\Delta(Y_{p(k)})X_k+\sigma(Y_{p(k)})\Delta(X_k)-\Delta(C_k). \]
Since $S'$ is symmetric, $\sigma(Y_{p(k)})\Delta(X_k)-\Delta(C_k) \in S_{k-1}$. Then $\Delta(Y_k)=0$ would imply $\Delta(Y_{p(k)})=0$.

2) Conversely, assume $\Delta(Y_{p(k)})=0$ but $\Delta(Y_k) \neq 0$. Recall that $P_{k}$ and $P_{k-1}$ are the sets of all homogeneous prime elements of $S_k$ and $S_{k-1}$ respectively. Applying Theorem \ref{HP-CGL} to
\[ \mathbf{k}[X_1;\sigma_1,\Delta_1] \cdots [X_k;\sigma_k,\Delta_k][X_0;\sigma,\Delta]_{(\lambda_1,...,\lambda_k,\lambda_0)}, \]
one sees that $\Delta$ vanishes on $P_k \setminus (\mathbf{k}^{\times}Y_k)$. Still by Theorem \ref{HP-CGL},
\[ P_{k-1}=(P_k \setminus (\mathbf{k}^{\times}Y_k)) \cup (\mathbf{k}^{\times}Y_{p(k)}). \]
Since $\Delta(Y_{p(k)})=0$ by assumption, $\Delta(P_{k-1})=0$. Applying Theorem \ref{HP-CGL} to
\[ \mathbf{k}[X_1;\sigma_1,\Delta_1] \cdots [X_{k-1};\sigma_{k-1},\Delta_{k-1}][X_0;\sigma,\Delta]_{(\lambda_1,...,\lambda_{k-1},\lambda_0)},\]
one has $\Delta=0$ on $S_{k-1}$ and $X_0,Y_{p(k)}$ are both homogeneous prime elements of $S_{k-1}[X_0]$. Applying Theorem \ref{HP-CGL} to
\[ \mathbf{k}[X_0;\sigma_0,\Delta_0][X_1;\sigma_1,\Delta_1] \cdots [X_{k};\sigma_{k},\Delta_{k}]_{(\lambda_0,\lambda_1,...,\lambda_{k})},\]
one has $\Delta_k(X_0)=0$ since $\Delta_k(Y_{p(k)}) \neq 0$. It follows from $\Delta(X_k)=-(\Delta_k \circ \sigma^{-1}_k)(X_0)$ that $\Delta(X_k)=0$. Then $\Delta=0$ on $S_k$, contradicting to $\Delta(Y_k) \neq 0$. Therefore, $\Delta(Y_{p(k)})=0$ would imply $\Delta(Y_{k})=0$.
\end{proof}

In the rest of this section, assume $\Delta \neq 0$. By Theorem \ref{HP-CGL}, there exists a unique $k \in [1,n]$ such that $\Delta(Y_k) \neq 0$ and $Y_k$ is a homogeneous prime element of $S$.
Let $\omega$ be as in 2) of Definition \ref{T-O} for $S[X_0;\sigma,\Delta]_{\lambda_0}$. For $j \in [1,n]$, let $\omega_j$ be as in 3) of Definition \ref{T-CGL} for $S$. Let $m$ be the largest nonnegative integer that $p^{m}(k) \in [1,n]$ and define
\[ D^{(m+1)}=0 \ \ \text{and} \ \ D^{(i)}=-(1-\omega)^{-1}Y^{-1}_{p^i(k)}(\Delta \circ \sigma^{-1})(Y_{p^i(k)}) \in \Gamma_{\bf{q}}. \]
By Theorem \ref{HP-CGL}, $D^{(i)}$ is the distinguished element with respect to $S_{p^{i-1}(k)-1}[X_0;\sigma,\Delta]_{\lambda_0}$ ($p^{-1}(k):=n+1$), for $i \in [1,m]$.

\begin{lemma}
\label{qd-M}
1) Let $i \in [0,m]$ and one has
\[ D^{(i)}-D^{(i+1)}=Y^{-1}_{p^i(k)}Y^{-1}_{p^{i+1}(k)}\mathcal{M}^{(i)}, \]
where $\mathcal{M}^{(i)}$ is either a nonzero constant or a product of homogeneous prime elements of $S_{p^{i}(k)-1}$.\\
2) Let $i \in [0,m-1]$ and let $B_{p^{i}(k)}$ be as in Lemma-Notation \ref{qmon}. One has
\[ D^{(i)}-D^{(i+1)}=\omega_{p^{i}(k)}Y^{-1}_{p^i(k)}B_{p^{i}(k)}(D^{(i+1)}-D^{(i+2)}). \]
\end{lemma}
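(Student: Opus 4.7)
The strategy is to mirror the proof of Lemma \ref{d-M} in the Poisson case, replacing log-Hamiltonian arguments by their noncommutative analogs involving the normal elements $Y_{p^i(k)}$ and $Y_{p^{i+1}(k)}$, and keeping careful track of the commutation scalars that have no counterpart in the Poisson setting.

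For part 1), set $\mathcal{E}^{(i)}=D^{(i)}-D^{(i+1)}$. Since $D^{(i)}$ and $D^{(i+1)}$ are distinguished with respect to the Cauchon extensions $S_{p^{i-1}(k)-1}[X_0;\sigma,\Delta]_{\lambda_0}$ and $S_{p^i(k)-1}[X_0;\sigma,\Delta]_{\lambda_0}$ respectively, the defining equation $Dr=\sigma(r)D+\Delta(r)$ holds for both on the common division ring of fractions of $S_{p^i(k)-1}$; subtracting these gives
\[
\mathcal{E}^{(i)} r = \sigma(r)\mathcal{E}^{(i)}, \ \ r\in S_{p^i(k)-1},
\]
which plays the role of the log-Hamiltonian identity for $g_i$ in the Poisson proof. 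Define $\mathcal{M}^{(i)}=Y_{p^{i+1}(k)}Y_{p^i(k)}\mathcal{E}^{(i)}$. Using $Y_{p^i(k)}=Y_{p^{i+1}(k)}X_{p^i(k)}-C_{p^i(k)}$, the explicit formula $D^{(i)}=-(1-\omega)^{-1}Y^{-1}_{p^i(k)}(\Delta\circ\sigma^{-1})(Y_{p^i(k)})$, and the symmetry of $S'$ (which ensures that $\Delta(X_{p^i(k)})=-(\Delta_{p^i(k)}\circ\sigma_{p^i(k)}^{-1})(X_0)\in S_{p^i(k)-1}$ and that $C_{p^i(k)}\in S_{p^i(k)-1}$), a direct expansion shows $\mathcal{M}^{(i)}\in S_{p^i(k)-1}$, and the twisted commutation relation above shows it is homogeneous and normal there. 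A quantum analog of Lemma \ref{prime factor}, available since the nested CGL algebras are noncommutative UFDs, then gives that $\mathcal{M}^{(i)}$ is a nonzero constant or a product of homogeneous prime elements of $S_{p^i(k)-1}$. Non-vanishing follows as in the Poisson case: if $\mathcal{M}^{(i)}=0$ then $Y_{p^i(k)}$ would divide $\Delta(Y_{p^i(k)})$ in $S_{p^i(k)}$, contradicting that $Y_{p^i(k)}X_0+(1-\omega)^{-1}(\Delta\circ\sigma^{-1})(Y_{p^i(k)})$ is prime in $S_{p^i(k)}[X_0]$ (here Lemma \ref{qk-p(k)} guarantees $\Delta(Y_{p^i(k)})\neq 0$).

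For part 2), I would parallel the degree analysis from Lemma \ref{d-M}(2). Expanding $\mathcal{M}^{(i)}$ via $Y_{p^i(k)}=Y_{p^{i+1}(k)}X_{p^i(k)}-C_{p^i(k)}$ yields
\[
\mathcal{M}^{(i)} + Y_{p^{i+1}(k)}C_{p^i(k)}D^{(i+1)} = Y_{p^{i+1}(k)}Y_{p^i(k)}D^{(i)} - Y^2_{p^{i+1}(k)}X_{p^i(k)}D^{(i+1)},
\]
and the quantum analog of the identity used in Lemma \ref{d-M}(2) shows the right-hand side lies in $Y_{p^{i+1}(k)}S_{p^i(k)-1}$. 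Now work in $\Gamma_{\mathbf{q},p^i(k)-1}=\Gamma'_{\mathbf{q},p^i(k)-1}[Y^{\pm 1}_{p^{i+1}(k)}]$: by Lemma-Notation \ref{qmon}, the lowest-$Y_{p^{i+1}(k)}$-degree term of $C_{p^i(k)}$ is $B_{p^i(k)}$ at degree $0$; by part 1) applied with $i$ replaced by $i+1$, the lowest-degree term of $D^{(i+1)}$ is $D^{(i+1)}-D^{(i+2)}$ at degree $-1$. By part 1) and Lemma \ref{qone-one}, $\mathcal{M}^{(i)}$ is a single monomial in $\Gamma_{\mathbf{q},p^i(k)-1}$. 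Matching the lowest-degree terms in $Y_{p^{i+1}(k)}$ of the displayed equation then forces the stated formula, with the scalar $\omega_{p^i(k)}$ arising from the noncommutative reordering needed to identify the constant-term part of $Y_{p^{i+1}(k)}C_{p^i(k)}D^{(i+1)}$ with $\mathcal{M}^{(i)}$ (equivalently, from the quantum identity $\Delta_{p^i(k)}(Y_{p^{i+1}(k)})=(\omega_{p^i(k)}-1)\sigma_{p^i(k)}(Y_{p^{i+1}(k)})D_{p^i(k)}$ of Theorem \ref{HP-CGL}).

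The \emph{main obstacle} is bookkeeping the noncommutative scaling factors to pin down the exact coefficient $\omega_{p^i(k)}$ in part 2); the Poisson calculation produces no such scalar, so the quantum computation must be redone by hand rather than transported. A secondary technical point is formulating and verifying the CGL analog of Lemma \ref{prime factor}, which is where the noncommutative UFD structure of the nested CGL extensions explicitly enters.
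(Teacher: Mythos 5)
Your proposal takes essentially the same route as the paper's proof of Lemma~\ref{qd-M}: it mirrors the Poisson proof of Lemma~\ref{d-M} step by step, replacing log-Hamiltonian identities with the $\sigma$-twisted normality relation $g_i s=\sigma(s)g_i$ obtained by subtracting the two distinguished-element identities, defining $\mathcal{M}^{(i)}=Y_{p^{i+1}(k)}Y_{p^{i}(k)}g_i$, using symmetry of $S'$ to place $\mathcal{M}^{(i)}$ in $S_{p^i(k)-1}$, invoking the noncommutative-UFD factorization of homogeneous normal elements (the paper cites \cite[Proposition 3.2, Theorem 3.7]{LLR} and \cite[Proposition 2.2]{QO} for precisely the fact you call ``a quantum analog of Lemma~\ref{prime factor}''), ruling out $\mathcal{M}^{(i)}=0$ by the same primality-of-$Y_{p^i(k)}X_0+\cdots$ argument via Lemma~\ref{qk-p(k)}, and then, for part~2), expanding $\mathcal{M}^{(i)}$ via $Y_{p^i(k)}=Y_{p^{i+1}(k)}X_{p^i(k)}-C_{p^i(k)}$ and matching lowest-degree terms in $Y_{p^{i+1}(k)}$ inside $\Gamma'_{\mathbf{q},p^i(k)-1}[Y^{\pm 1}_{p^{i+1}(k)}]$ using Lemma-Notation~\ref{qmon} and Lemma~\ref{qone-one}.

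Two small remarks. First, in your displayed identity the term $Y_{p^{i+1}(k)}C_{p^i(k)}D^{(i+1)}$ should appear with a minus sign on the left, i.e.\ $\mathcal{M}^{(i)} - Y_{p^{i+1}(k)}C_{p^i(k)}D^{(i+1)}$; when you move it to the left the sign flips. Second, the paper handles the scalar $\omega_{p^i(k)}$ by first rewriting $-Y^2_{p^{i+1}(k)}X_{p^i(k)}D^{(i+1)}$ as $-Y^2_{p^{i+1}(k)}\sigma_{p^i(k)}(D^{(i+1)})X_{p^i(k)}$ (so that the non-$\sigma$-shifted residue is absorbed into the $C_{p^i(k)}$-term, producing the $\omega_{p^i(k)}$ coefficient already at the level of \eqref{qdegree}), rather than extracting $\omega_{p^i(k)}$ only at the final matching step; your sketch leaves this reorder implicit, and while your heuristic (tracing it to the identity $\Delta_{p^i(k)}(Y_{p^{i+1}(k)})=(\omega_{p^i(k)}-1)\sigma_{p^i(k)}(Y_{p^{i+1}(k)})D_{p^i(k)}$) points at the right mechanism, the actual bookkeeping should be carried out as in the paper so that the degree-zero matching has the $\omega_{p^i(k)}$ prefactor in place before you compare lowest-degree terms. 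Neither point affects the correctness of the overall strategy, which is the paper's.
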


\begin{proof}
1) Define $g_i=D^{(i)}-D^{(i+1)}$. Because $D^{(i)}$ is the distinguished element with respect to $S_{p^{i-1}(k)-1}[X_0;\sigma,\Delta]_{\lambda_0}$ and $S_{p^{i}(k)-1} \subset S_{p^{i-1}(k)-1}$,
\[ D^{(i)}s=\sigma(s)D^{(i)}+\Delta(s), \ \ s \in S_{p^{i}(k)-1}. \]
Because $D^{(i+1)}$ is the distinguished element with respect to $S_{p^{i}(k)-1}[X_0;\sigma,\Delta]_{\lambda_0}$,
\[ D^{(i+1)}s=\sigma(s)D^{(i+1)}+\Delta(s), \ \ s \in S_{p^{i}(k)-1}. \]
Thus, $g_is=\sigma(s)g_i$ for $s \in S_{p^{i}(k)-1}$, which means $g_i S_{p^{i}(k)-1} = S_{p^{i}(k)-1} g_i$. By Theorem \ref{HP-CGL}, $Y_{p^{i}(k)}=Y_{p^{i+1}(k)}(X_{p^{i}(k)}-D_{p^{i}(k)})$ and $Y_{p^{i+1}(k)}$ is normal in $S_{p^{i}(k)-1}$. Hence,
\[ Y_{p^{i}(k)} S_{p^{i}(k)-1} =  S_{p^{i}(k)-1} Y_{p^{i}(k)} \ \ \text{and} \ \ Y_{p^{i+1}(k)} S_{p^{i}(k)-1} = S_{p^{i}(k)-1} Y_{p^{i+1}(k)}. \]
Define $\mathcal{M}^{(i)}=Y_{p^{i+1}(k)}Y_{p^{i}(k)}g_i$. Then
\[ \mathcal{M}^{(i)} S_{p^{i}(k)-1} =  S_{p^{i}(k)-1} \mathcal{M}^{(i)}.\]
Through direct computation, one sees that $\mathcal{M}^{(i)} \in S_{p^{i}(k)-1}$ since $S'$ is symmetric. It follows that $\mathcal{M}^{(i)}$ is a homogeneous normal element of $S_{p^{i}(k)-1}$. By \cite[Proposition 3.2, Theorem 3.7]{LLR} and \cite[Proposition 2.2]{QO}, $\mathcal{M}^{(i)}$ is either a constant or a product of homogeneous prime elements of $S_{p^{i}(k)-1}$. Suppose $\mathcal{M}^{(i)}=0$. Then $D^{(i)}-D^{(i+1)}=0$. Through direct computation, one sees that $Y_{p^i(k)}\Delta(Y_{p^{i+1}(k)})$ is a scalar multiple of $Y_{p^{i+1}(k)}\Delta(Y_{p^i(k)})$. Since $Y_{p^i(k)}$ is prime in $S_{p^{i}(k)}$ and clearly $Y_{p^{i+1}(k)} \notin Y_{p^i(k)}S_{p^{i}(k)}$, $\Delta(Y_{p^i(k)}) \in Y_{p^i(k)}S_{p^{i}(k)}$. Applying Theorem \ref{HP-CGL} to
\[ \mathbf{k}[X_1;\sigma_1,\Delta_1] \cdots [X_{p^i(k)};\sigma_{p^i(k)},\Delta_{p^i(k)}][X_0;\sigma,\Delta]_{(\lambda_1,...,\lambda_{p^i(k)},\lambda_0)}, \]
as $\Delta(Y_{p^i(k)}) \neq 0$ by Lemma \ref{qk-p(k)}, one sees that $Y_{p^i(k)}X_0+(1-\omega)^{-1}Y^{-1}_{p^i(k)}(\Delta \circ \sigma^{-1})(Y_{p^i(k)})$ is prime in $S_{p^i(k)}[X_0]$, contradicting to $\Delta(Y_{p^i(k)}) \in Y_{p^i(k)}S_{p^{i}(k)}$. Thus, $\mathcal{M}^{(i)}$ is nonzero. As $g_i=D^{(i)}-D^{(i+1)}$ and $\mathcal{M}^{(i)}=Y_{p^{i+1}(k)}Y_{p^{i}(k)}g_i$,
\[ D^{(i)}-D^{(i+1)}=Y^{-1}_{p^i(k)}Y^{-1}_{p^{i+1}(k)}\mathcal{M}^{(i)}. \]

2) Let $i \in [0,m-1]$. one has
\begin{align*}
&\mathcal{M}^{(i)}=Y_{p^{i+1}(k)}Y_{p^{i}(k)}(D^{(i)}-D^{(i+1)})\\
&=Y_{p^{i+1}(k)}Y_{p^{i}(k)}D^{(i)}-Y^2_{p^{i+1}(k)}X_{p^{i}(k)}D^{(i+1)}+Y_{p^{i+1}(k)}C_{p^{i}(k)}D^{(i+1)}\\
&=Y_{p^{i+1}(k)}Y_{p^{i}(k)}D^{(i)}-Y^2_{p^{i+1}(k)}\sigma_{p^{i}(k)}(D^{(i+1)})X_{p^{i}(k)}+\omega_{p^{i}(k)}Y_{p^{i+1}(k)}C_{p^{i}(k)}D^{(i+1)}.
\end{align*}
It follows from
\[ Y_{p^{i+1}(k)}Y_{p^{i}(k)}D^{(i)}-Y^2_{p^{i+1}(k)}\sigma_{p^{i}(k)}(D^{(i+1)})X_{p^{i}(k)} \in Y_{p^{i+1}(k)}S_{p^{i}(k)-1} \]
that
\begin{equation}
\label{qdegree}
\mathcal{M}^{(i)}-\omega_{p^{i}(k)}Y_{p^{i+1}(k)}C_{p^{i}(k)}D^{(i+1)} \in Y_{p^{i+1}(k)}S_{p^{i}(k)-1}.
\end{equation}
Recall from Lemma-Notation \ref{qmon} that $\Gamma_{\mathbf{q},p^{i}(k)-1}$ and $\Gamma'_{\mathbf{q},p^{i}(k)-1}$ are the quantum $\mathbf{k}$-tori generated by $Y_1,...,Y_{p^{i}(k)-1}$ and $Y_1,...,Y_{p^{i+1}(k)-1},Y_{p^{i+1}(k)+1},...,Y_{p^{i}(k)-1}$ respectively.
It is easy to see that
\[ \Gamma_{\mathbf{q},p^{i}(k)-1}=\Gamma'_{\mathbf{q},p^{i}(k)-1}[Y^{\pm 1}_{p^{i+1}(k)}]. \]
Now consider every element in $\Gamma_{\mathbf{q},p^{i}(k)-1}$, including those involved in (\ref{qdegree}), as an element in $\Gamma'_{\mathbf{q},p^{i}(k)-1}[Y^{\pm 1}_{p^{i+1}(k)}]$. The lowest degree term of $C_{p^{i}(k)}$ is $B_{p^{i}(k)}$ with degree 0. By part 1), the lowest degree term of $D^{(i+1)}$ is $D^{(i+1)}-D^{(i+2)}$ with degree -1. Thus, the lowest degree term of $\omega_{p^{i}(k)}Y_{p^{i+1}(k)}C_{p^{i}(k)}D^{(i+1)}$ is $\omega_{p^{i}(k)}Y_{p^{i+1}(k)}B_{p^{i}(k)}(D^{(i+1)}-D^{(i+2)})$ with degree 0. By (\ref{qdegree}),
the lowest degree term of $\mathcal{M}^{(i)}-\omega_{p^{i}(k)}Y_{p^{i+1}(k)}C_{p^{i}(k)}D^{(i+1)}$ has degree greater than 0. By 1) of Lemma \ref{qone-one}, $\mathcal{M}^{(i)}$ is a monomial in $\Gamma_{\mathbf{q},p^{i}(k)-1}$ because $\mathcal{M}^{(i)}$ is either a nonzero constant or a product of homogeneous prime elements of $S_{p^{i}(k)-1}$. Hence,
\[ \mathcal{M}^{(i)}=\omega_{p^{i}(k)}Y_{p^{i+1}(k)}B_{p^{i}(k)}(D^{(i+1)}-D^{(i+2)}). \]
Therefore, by part 1),
\[ D^{(i)}-D^{(i+1)}=\omega_{p^{i}(k)}Y^{-1}_{p^i(k)}B_{p^{i}(k)}(D^{(i+1)}-D^{(i+2)}). \]
\end{proof}

\section{Quantization of Poisson-CGL Extensions}\label{sec-qua}

Fix a field $\mathbf{k}$ of characteristic $0$. Let $L=\mathbf{k}[q^{\pm 1}]$ be the $\mathbf{k}$-algebra of Laurent polynomials in the single variable $q$ and let $\mathbb{K}=\mathbf{k}(q)$ be its fraction field. Let $\mathbb{T}$ be a split $\mathbf{k}$-torus. We follow the notation in $\S$\ref{subsec-not}.

\subsection{Quantum-CGL extensions}\label{subsec-qua-CGL}

In \cite[$\S$4]{LL}, the semiclassical process of a special type of Ore extensions over $L$ is studied. This type of Ore extensions can be defined as follows:

\begin{definition}
{\rm
Let $\mathcal{A}$ be a quantum $L$-algebra (see Definition \ref{qua-alg}). Let $\mathcal{B}=\mathcal{A}[X;\sigma,\Delta]$ be an Ore extension of $\mathcal{A}$ such that $\sigma$ and $\Delta$ are $L$-linear maps. Then $\mathcal{B}$ is called a quantum-Ore extension of $\mathcal{A}$ if $\mathcal{B}$ is also a quantum $L$-algebra.
\hfill $\diamond$
}
\end{definition}

Note that $\mathcal{B}$ is always free as an $L$-module and $B:=\mathcal{B}/(q-1)\mathcal{B}$ is commutative if and only if $\sigma(a)-a$ and $\Delta(a)$ lie in $(q-1)\mathcal{A}$ for any $a \in \mathcal{A}$. Through the semiclassical process, we obtain Poisson-Ore extensions from quantum-Ore extensions.

\begin{proposition}
\cite[Proposition 4.1]{LL}
\label{Q-O-P-O}
Let $\mathcal{B}=\mathcal{A}[X;\sigma,\Delta]$ be a quantum-Ore extension of a quantum $L$-algebra $\mathcal{A}$. Then $B:=\mathcal{B}/(q-1)\mathcal{B}$ is a Poisson-Ore extension of the form $B=A[x;\theta,\delta]$, where $A=\mathcal{A}/(q-1)\mathcal{A}$ and $x=X+(q-1)\mathcal{B} \in B$. More precisely, for any $a \in \mathcal{A}$,
\begin{align*}
&\theta(a+(q-1)\mathcal{A})=\frac{\sigma(a)-a}{q-1}+(q-1)\mathcal{A}, \\
&\delta(a+(q-1)\mathcal{A})=\frac{\Delta(a)}{q-1}+(q-1)\mathcal{A}.
\end{align*}
\end{proposition}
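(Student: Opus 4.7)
The plan is to verify three things in sequence: (i) $B = \mathcal{B}/(q-1)\mathcal{B}$ is, as a commutative $\mathbf{k}$-algebra, isomorphic to the polynomial ring $A[x]$ with $x = X + (q-1)\mathcal{B}$; (ii) the maps $\theta$ and $\delta$ defined by the given formulas are well-defined $\mathbf{k}$-linear derivations of $A$; and (iii) the semiclassical Poisson bracket on $B$ satisfies $\{x, \bar a\} = \theta(\bar a)x + \delta(\bar a)$ for all $\bar a \in A$, so that $B = A[x;\theta,\delta]$ in the sense of the introduction.

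For step (i), since $\mathcal{B} = \mathcal{A}[X;\sigma,\Delta]$ is free as a left $\mathcal{A}$-module on the basis $\{X^i\}_{i \geq 0}$ and $\mathcal{A}$ is free over $L$, the decomposition $(q-1)\mathcal{B} = \bigoplus_i (q-1)\mathcal{A}\, X^i$ gives $\mathcal{A} \cap (q-1)\mathcal{B} = (q-1)\mathcal{A}$, so the inclusion $\mathcal{A} \hookrightarrow \mathcal{B}$ descends to an inclusion $A \hookrightarrow B$, and $B$ is free as an $A$-module on $\{x^i\}_{i \geq 0}$. Commutativity of $B$ applied to the relation $Xa = \sigma(a)X + \Delta(a)$ for $a \in \mathcal{A}$ yields $(\sigma(a) - a)X + \Delta(a) \in (q-1)\mathcal{B}$; the direct-sum decomposition above then forces both $\sigma(a) - a$ and $\Delta(a)$ to lie in $(q-1)\mathcal{A}$. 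Since $q-1$ is not a zero divisor in $\mathcal{A}$, the elements $\frac{\sigma(a)-a}{q-1}$ and $\frac{\Delta(a)}{q-1}$ are well-defined in $\mathcal{A}$, making the formulas for $\theta$ and $\delta$ meaningful.

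For step (ii), I check well-definedness by replacing $a$ with $a + (q-1)b$ and noting that the extra term is $\sigma(b) - b \equiv 0 \pmod{(q-1)\mathcal{A}}$ (and similarly for $\delta$). The Leibniz rules $\sigma(a_1 a_2) = \sigma(a_1)\sigma(a_2)$ and $\Delta(a_1 a_2) = \sigma(a_1)\Delta(a_2) + \Delta(a_1) a_2$ both rearrange into the form $(\sigma(a_1)-a_1)(\cdot) + (\cdot)(\sigma(a_2)-a_2)$ or $\sigma(a_1)(\cdot) + (\cdot) a_2$; dividing by $q-1$ and reducing modulo $(q-1)\mathcal{A}$, the factors $\sigma(a_i)$ become $\bar a_i$, yielding the ordinary Leibniz rule for $\theta$ and $\delta$ on $A$.

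For step (iii), I compute directly: for $a \in \mathcal{A}$, in $\mathcal{B}$ we have $Xa - aX = (\sigma(a)-a)X + \Delta(a)$, so
\[
\{x, \bar a\} = \frac{Xa - aX}{q-1} + (q-1)\mathcal{B} = \frac{\sigma(a)-a}{q-1}\cdot X + \frac{\Delta(a)}{q-1} + (q-1)\mathcal{B} = \theta(\bar a)\, x + \delta(\bar a).
\]
In particular $\{x, A\} \subset Ax + A$, and comparing with the bracket $\{\bar a, \bar b\} = \frac{ab-ba}{q-1} + (q-1)\mathcal{B}$ for $a,b \in \mathcal{A}$ shows that it restricts on $A \subset B$ to the semiclassical bracket of $\mathcal{A}$. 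Together with step (i), this says exactly that $B = A[x;\theta,\delta]$ as a Poisson-Ore extension, and the derivation/Poisson-derivation properties of $\theta$ and $\delta$ then follow from the general statement recalled just after equations \eqref{eq-theta-delta-2} in the introduction. The only delicate points, and the places where I would be most careful, are the bookkeeping needed to identify $\mathcal{A} \cap (q-1)\mathcal{B}$ with $(q-1)\mathcal{A}$ (so that $\theta$ and $\delta$ can be defined with values in $A$ rather than in $B$) and the compatibility of the two semiclassical brackets under the embedding $A \hookrightarrow B$; both rely crucially on the freeness built into the definition of quantum $L$-algebra.
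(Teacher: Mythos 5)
The paper does not give its own proof of this proposition: it cites \cite[Proposition 4.1]{LL}, and immediately before the statement merely records that $B$ is commutative if and only if $\sigma(a)-a$ and $\Delta(a)$ lie in $(q-1)\mathcal{A}$ for all $a\in\mathcal{A}$. Your argument is correct and spells out exactly what is needed: the freeness of $\mathcal{B}$ as a left $\mathcal{A}$-module on $\{X^i\}$ together with the freeness of $\mathcal{A}$ over $L$ gives the decomposition $(q-1)\mathcal{B}=\bigoplus_i(q-1)\mathcal{A}\,X^i$, which yields both the intersection $\mathcal{A}\cap(q-1)\mathcal{B}=(q-1)\mathcal{A}$ (so $A$ embeds in $B$ and $B\cong A[x]$) and the fact that $q-1$ is a non-zero-divisor in $\mathcal{A}$ (so the formulas for $\theta,\delta$ are meaningful), and your step (iii) correctly identifies the semiclassical bracket with the Poisson-Ore data. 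One small redundancy: once step (iii) establishes that $B=A[x]$ is a Poisson algebra extending $A$ with $\{x,A\}\subset Ax+A$, the derivation and Poisson-derivation properties of $\theta$ and $\delta$ are automatic by the general discussion following \eqref{eq-theta-delta-2}, so the explicit Leibniz check in step (ii) beyond well-definedness is not logically required, though it is harmless and reassuring.
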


An iterated quantum-Ore extension thus gives rise to an iterated Poisson-Ore extension as its semiclassical limit by repeatedly applying Proposition \ref{Q-O-P-O}.

\begin{definition}
\label{ite-qua}
{\rm
We say an iterated quantum-Ore extension
\[ \mathcal{A}=L[X_1;\sigma_1,\Delta_1]\cdots [X_n;\sigma_n,\Delta_n] \]
is a quantization of an iterated Poisson-Ore extension
\[ A'=\mathbf{k}[x'_1;\theta'_1,\delta'_1]\cdots [x'_n;\theta'_n,\delta'_n] \]
if there is a Poisson algebra isomorphism from $A'$ to $A=\mathcal{A}/(q-1)\mathcal{A}$ by sending $x'_i$ to $x_i=X_i+(q-1)\mathcal{A}$, for $i \in [1,n]$.
\hfill $\diamond$
}
\end{definition}

For $j \in [1,n]$, denote by $\mathcal{A}_{j}$ the subalgebra of $\mathcal{A}$ generated by $X_1,...,X_{j}$ and set $\mathcal{A}_0 = L$.

\begin{definition}
\label{Q-CGL}
{\rm
Let $\mathcal{A}=L[X_1;\sigma_1,\Delta_1]\cdots [X_n;\sigma_n,\Delta_n]$ be an iterated quantum-Ore extension with a rational $\mathbb{T}$-action by $\mathbf{k}$-algebra automorphisms. Then $\mathcal{A}$ is called a quantum-CGL extension if for $j \in [1,n]$,

1) $X_j$ is homogeneous with weight $\lambda_j \in \chi(\mathbb{T})$;

2) $\Delta_j$ is locally nilpotent;

3) there exists $h_j \in \chi_{\ast}(\mathbb{T})$ satisfying $\eta_j:=\lambda_{j}(h_{j}) \neq 0$ and
\[ \sigma_{j}(X_i)=q^{\lambda_i(h_j)}X_i, \ \ i \in [1,j-1].\]
We denote this quantum-CGL extension by
\[ \mathcal{A}=L[X_1;\sigma_1,\Delta_1]\cdots [X_n;\sigma_n,\Delta_n]_{(\lambda_1, \ldots, \lambda_n)}. \]
\hfill $\diamond$
}
\end{definition}

Through direct checking, one sees that the semiclassical limit of a quantum-CGL extension is a Poisson-CGL extension (see Definition \ref{T-Poi-CGL}). Condition 3) of Definition \ref{Q-CGL} results in an "integral" condition on the semiclassical limit of a quantum-CGL extension.

\begin{definition}
\label{integral}
{\rm
Let $A=\mathbf{k}[x_1;\theta_1,\delta_1] \cdots [x_n;\theta_n,\delta_n]_{(\lambda_1,...,\lambda_n)}$ be a Poisson-CGL extension. Then $A$ is said to be integral if there exist $h_1,...,h_n \in \chi_{\ast}(\mathbb{T})$ such that for $j \in [1,n]$, $\eta_j:=\lambda_{j}(h_{j}) \neq 0$ and $\theta_{j}=\partial h_{j}|_{A_{j-1}}$.
\hfill $\diamond$
}
\end{definition}

We also have the notion of "symmetric" for integral Poisson-CGL extensions and quantum-CGL extensions.

\begin{definition}
\label{P-sym}
{\rm
Let $A=\mathbf{k}[x_1;\theta_1,\delta_1] \cdots [x_n;\theta_n,\delta_n]_{(\lambda_1,...,\lambda_n)}$ be an integral Poisson-CGL extension. Then $A$ is said to be symmetric if

1) for $1 \leq i <j \leq n$, $\delta_j(x_i) \in A_{[i+1,j-1]}$;

2) there exist $h'_1,...,h'_n \in \chi_{\ast}(\mathbb{T})$ such that

\indent \indent a) $\theta_j(x_i)=-\lambda_j(h'_i)x_i$ for $1 \leq i < j \leq n$, 

\indent \indent b) $\lambda_{j}(h'_{j}) \neq 0$ for $j \in [1,n]$.\\
Here, $A_{[i+1,j-1]}$ is the subalgebra of $A$ generated by $x_{i+1},...,x_{j-1}$.
\hfill $\diamond$
}
\end{definition}

\begin{definition}
\label{Q-sym}
{\rm
Let $\mathcal{A}=L[X_1;\sigma_1,\Delta_1]\cdots [X_n;\sigma_n,\Delta_n]_{(\lambda_1, \ldots, \lambda_n)}$ be a quantum-CGL extension. Then $\mathcal{A}$ is said to be symmetric if 

1) for $1 \leq i <j \leq n$, $\Delta_j(X_i) \in \mathcal{A}_{[i+1,j-1]}$;

2) there exist $h'_1,...,h'_n \in \chi_{\ast}(\mathbb{T})$ such that

\indent \indent a) $\sigma_j(X_i)=q^{-\lambda_j(h'_i)}X_i$ for $1 \leq i < j \leq n$,

\indent \indent b) $\lambda_{j}(h'_{j}) \neq 0$ for $j \in [1,n]$.\\
Here, $\mathcal{A}_{[i+1,j-1]}$ is the subalgebra of $\mathcal{A}$ generated by $X_{i+1},...,X_{j-1}$.
\hfill $\diamond$
}
\end{definition}

\begin{remark}
{\rm
\label{extension}
For a quantum-CGL extension $\mathcal{A}$, denote by $\mathcal{A}^{ex}$ the extended algebra $\mathcal{A} \otimes_{L} \mathbb{K}$, extending the base ring $L$ to the field $\mathbb{K}$. Define $\mathbb{T}_q=(\mathbb{K}^{\times})^r$ and identify its character group $\chi(\mathbb{T}_q)$ with $\chi(\mathbb{T})$. By assigning $\lambda_1,...,\lambda_n$ to $X_1,...,X_n$ respectively as their weights, we define a rational $\mathbb{T}_q$-action on $\mathcal{A}^{ex}$ by $\mathbb{K}$-algebra automorphisms. For any $h=(h_{(1)},...,h_{(r)}) \in \chi_{\ast}(\mathbb{T})$, denote by $q^{h}$ the element $(q^{h_{(1)}},...,q^{h_{(r)}}) \in \mathbb{T}_q$.  It is straightforward to check that $\mathcal{A}^{ex}$ is a CGL extension under the $\mathbb{T}_q$-action by letting $t_j=q^{h_j}$ for $j \in [1,n]$. Furthermore, if $\mathcal{A}$ is symmetric, then $\mathcal{A}^{ex}$ is symmetric by letting $t'_j=q^{h'_j}$ for $j \in [1,n]$.
\hfill $\diamond$
}
\end{remark}

It is easy to see that the semiclassical limit of a symmetric quantum-CGL extension is a symmetric integral Poisson-CGL extension.

\subsection{Quantization of symmetric integral Poisson-CGL extensions}\label{subsec-qua}

Given a symmetric integral Poisson-CGL extension, we construct a symmetric quantum-CGL extension, which is preferred in the following sense:

\begin{definition}
\label{per-qua}
{\rm
Let $A=\mathbf{k}[x_1;\theta_1,\delta_1] \cdots [x_n;\theta_n,\delta_n]_{(\lambda_1,...,\lambda_n)}$ be a symmetric integral Poisson-CGL extension. Then a symmetric quantum-CGL extension
\[ \mathcal{A}=L[X_1;\sigma_1,\Delta_1]\cdots [X_n;\sigma_n,\Delta_n]_{(\lambda_1,...,\lambda_n)} \]
(note that we require $x_j$ and $X_j$ have the same weight $\lambda_j$ for $j \in [1,n]$) is called a preferred quantization of $A$ if

1) $\mathcal{A}$ is a quantization of $A$ in the sense of Definition \ref{ite-qua};

2) $A$ and $\mathcal{A}^{ex}$ share the same level sets (see Definition \ref{level} and Definition \ref{qlevel});

3) $Y_{\mathcal{A}^{ex}}=(Y_1,...,Y_n)$, the sequence of homogeneous prime elements of $\mathcal{A}^{ex}$ (see Definition \ref{qY_S}), lie in $\mathcal{A}$.
\hfill $\diamond$
}
\end{definition}

Recall that an element $r$ in a domain $R$ is called a prime element if $r$ is a normal element (i.e., $rR=Rr$), and the principal ideal $rR$ is a complete prime ideal (i.e., $R/(rR)$ is a domain).

\begin{lemma}
\label{prime}
Let $\mathcal{A}=L[X_1;\sigma_1,\Delta_1]\cdots [X_n;\sigma_n,\Delta_n]_{(\lambda_1,...,\lambda_n)}$ be a quantum-CGL extension. Let $Y_{\mathcal{A}^{ex}}=(Y_1,...,Y_n)$ be the sequence of homogeneous prime elements of $\mathcal{A}^{ex}$. Assume $Y_{\mathcal{A}^{ex}}$ lie in $\mathcal{A}$. Then for $j \in [1,n]$, $Y_j$ is a prime element of $\mathcal{A}_{p^{-1}(j)-1}$ (note that $p^{-1}(j)$ may not exist, in which case, $p^{-1}(j):=n+1$).
\end{lemma}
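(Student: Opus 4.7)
The plan is to reduce the two requirements for primality of $Y_j$ in $R := \mathcal{A}_{p^{-1}(j)-1}$ (namely normality and complete primality of the principal ideal) to the corresponding properties of $Y_j$ in $R^{ex} := \mathcal{A}^{ex}_{p^{-1}(j)-1} = R \otimes_L \mathbb{K}$, which are known from Theorem \ref{HP-CGL} applied to the CGL extension $\mathcal{A}^{ex}$ (see Remark \ref{extension}). The bridge between $R$ and $R^{ex}$ rests on two observations about $R$: (i) for every irreducible $\pi \in L$ the quotient $R/\pi R$ is a domain, and (ii) the leading coefficient of $Y_j$ in the monomial $L$-basis $\{X^{k}\}$ of $R$ is a unit of $L$, so in particular $Y_j \notin \pi R$ for any irreducible $\pi \in L$.

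For (i), since each $\sigma_i$ and $\Delta_i$ is $L$-linear and $\pi \in L$ is central, the ideal $\pi R$ is two-sided and preserved by every $\sigma_i$ and $\Delta_i$, so $R/\pi R$ is canonically an iterated Ore extension of the field $L/\pi L$ whose $\sigma$-maps remain ring automorphisms, hence a domain. For (ii), I would induct on $j$ using the recursion $Y_j = Y_{p(j)} X_j - C_j$ of Theorem \ref{HP-CGL}. The hypothesis $Y_{\mathcal{A}^{ex}} \subset \mathcal{A}$ together with $Y_{p(j)} X_j \in \mathcal{A}$ implies $C_j \in \mathcal{A} \cap \mathcal{A}^{ex}_{j-1} = \mathcal{A}_{j-1}$, so the entire computation takes place inside $R$. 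Lemma \ref{qone-one} gives $\deg Y_j = f_{p(j)} + e_j$ and $\deg C_j < \deg Y_j$, and since $Y_{p(j)}$ is supported on $X_1, \ldots, X_{j-1}$ the right multiplication $Y_{p(j)} \cdot X_j$ requires no commutation, so the leading coefficient of $Y_j$ equals that of $Y_{p(j)}$. With the base case $Y_j = X_j$ when $p(j) = 0$ (leading coefficient $1$), induction finishes (ii).

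The main technical claim is then: if $b \in R^{ex}$ satisfies $b Y_j \in R$ or $Y_j b \in R$, then $b \in R$. I would prove this by writing $b = b'/f$ with $b' \in R$ and $f \in L \setminus \{0\}$ chosen with the fewest irreducible factors. If $f$ is not a unit of $L$, take an irreducible factor $\pi$ of $f$; then $b' Y_j = f(bY_j) \in \pi R$, and in the domain $R/\pi R$ the relation $\bar{b'}\,\bar{Y_j} = 0$ combined with $\bar{Y_j} \neq 0$ forces $b' \in \pi R$, and cancelling $\pi$ from numerator and denominator contradicts the minimality of $f$. From this claim, normality of $Y_j$ in $R$ is immediate from normality in $R^{ex}$, and complete primality follows because the natural map $R/Y_j R \to R^{ex}/Y_j R^{ex}$ is injective (any $r \in R \cap Y_j R^{ex}$ has a preimage $b$ which the claim forces to lie in $R$), while the target is a domain. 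The main obstacle I anticipate is the interaction between the base-ring descent and the recursive formula for $Y_j$: the factor $(1-\omega_j)^{-1}$ appearing implicitly in $C_j$ does not lie in $L$, and it is precisely the hypothesis $Y_{\mathcal{A}^{ex}} \subset \mathcal{A}$ that forces the inductive leading-coefficient computation to remain valid inside $R$.
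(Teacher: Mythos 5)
Your argument is correct and takes a genuinely different route from the paper's. Both proofs reduce to the same key claim, namely that $a \ast Y_j \in R$ (or $Y_j \ast a \in R$) with $a \in R^{ex}$ forces $a \in R$, where $R = \mathcal{A}_{p^{-1}(j)-1}$; once that is in hand, the deduction of normality and complete primality of $Y_j$ in $R$ from the corresponding facts in $R^{ex}$ is essentially the same in both. The paper proves the claim by a direct degree computation: it decomposes $a = a_1 + a_2$ with $a_1 \in R$ and every monomial coefficient of $a_2$ lying in $\mathbb{K} \setminus L$, and then observes that the highest-degree term of $a_2 \ast Y_j$ (resp.\ $Y_j \ast a_2$) has coefficient $q^{l} \varepsilon$ with $\varepsilon \notin L$, because the leading coefficient of $Y_j$ is $1$ and products of monomials in a CGL extension contribute a $q$-power scalar times a monomial plus lower-order terms. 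You instead reduce modulo an irreducible $\pi \in L$: you verify that $R/\pi R$ is a domain (an iterated Ore extension of the field $L/\pi L$, with $\sigma$-maps descending to automorphisms since each $\sigma_i$ is $L$-linear and invertible) and that $\overline{Y_j} \neq 0$ in $R/\pi R$ (again because the leading coefficient of $Y_j$ is an $L$-unit, established by the same induction on the recursion $Y_j = Y_{p(j)} X_j - C_j$ that the paper uses implicitly), and then clear a factor of $\pi$ from a putative denominator $f$, contradicting minimality. The two proofs thus share the leading-coefficient computation for $Y_j$ but differ in the mechanism for detecting a non-$L$ denominator. Your reduction-mod-$\pi$ argument is more structural and would adapt to any quantum $L$-algebra $R$ in which each $R/\pi R$ is a domain and $Y_j$ has an $L$-unit coefficient, at the modest cost of verifying that the Ore structure and the injectivity of the $\sigma$-maps descend; the paper's degree argument is more computational and more tightly bound to the graded monomial basis of an iterated Ore extension, but it avoids that verification entirely.
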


\begin{proof}
Let $j \in [1,n]$. We claim that
\begin{align*}
&\mathcal{A}_{p^{-1}(j)-1} \cap (\mathcal{A}_{p^{-1}(j)-1}^{ex} \ast Y_j)=\mathcal{A}_{p^{-1}(j)-1} \ast Y_j, \ \text{and} \\
&\mathcal{A}_{p^{-1}(j)-1} \cap (Y_j \ast \mathcal{A}_{p^{-1}(j)-1}^{ex})=Y_j \ast \mathcal{A}_{p^{-1}(j)-1}.
\end{align*}
Define the $\mathbb{N}^n$-valued degrees for elements in $\mathcal{A}^{ex}$ as in $\S$\ref{subsec-not}. From the definition of $Y_j$, the highest degree term of $Y_j$ is
\[ X_{p^{m_j}(j)} \ast X_{p^{m_j-1}(j)} \ast \cdots \ast X_{j}, \]
where $m_j$ is the largest nonnegative integer such that $p^{m_j}(j) \in [1,n]$. Denote by $f_j \in \mathbb{N}^n$ the degree of $X_{p^{m_j}(j)} \ast X_{p^{m_j-1}(j)} \ast \cdots \ast X_{j}$. Assume $a \in \mathcal{A}_{p^{-1}(j)-1}^{ex} \setminus \mathcal{A}_{p^{-1}(j)-1}$. Then $a=a_1+a_2$, where $a_1 \in \mathcal{A}_{p^{-1}(j)-1}$ and each monomial term of $a_2$ has coefficient in $\mathbb{K} \setminus L$. In order to prove the claim, it suffices to show that both $Y_j \ast a_2$ and $a_2 \ast Y_j$ lie in $\mathcal{A}_{p^{-1}(j)-1}^{ex} \setminus \mathcal{A}_{p^{-1}(j)-1}$. Let $a_h=\varepsilon X^{f_h}$ be the highest degree term of $a_2$, where $\varepsilon \in \mathbb{K} \setminus L$. By 3) of Definition \ref{Q-CGL}, the highest degree term of $Y_j \ast a_2$ and $a_2 \ast Y_j$ are $q^{l_1}\varepsilon X^{f_j+f_h}$ and $q^{l_2}\varepsilon X^{f_j+f_h}$ respectively, for some $l_1,l_2 \in \mathbb{Z}$. As $\varepsilon \in \mathbb{K} \setminus L$, both $q^{l_1}\varepsilon$ and $q^{l_2}\varepsilon$ lie in $\mathbb{K} \setminus L$. Therefore, both $Y_j \ast a_2$ and $a_2 \ast Y_j$ lie in $\mathcal{A}_{p^{-1}(j)-1}^{ex} \setminus \mathcal{A}_{p^{-1}(j)-1}$. The claim has been proven.

Now we prove that $Y_j$ is a prime element of $\mathcal{A}_{p^{-1}(j)-1}$. By Theorem \ref{HP-CGL}, $Y_j$ is a prime element of $\mathcal{A}^{ex}_{p^{-1}(j)-1}$. By definition, 

1) $Y_j \ast \mathcal{A}_{p^{-1}(j)-1}^{ex}=\mathcal{A}_{p^{-1}(j)-1}^{ex} \ast Y_j$. 

2) $Y_j \ast \mathcal{A}_{p^{-1}(j)-1}^{ex}$ is a complete prime ideal in $\mathcal{A}_{p^{-1}(j)-1}^{ex}$.\\
By the claim and the fact that $Y_j \ast \mathcal{A}_{p^{-1}(j)-1}^{ex}=\mathcal{A}_{p^{-1}(j)-1}^{ex} \ast Y_j$,
\[ Y_j \ast \mathcal{A}_{p^{-1}(j)-1}=\mathcal{A}_{p^{-1}(j)-1} \ast Y_j. \]
Suppose two nonzero elements $g_1,g_2 \in \mathcal{A}_{p^{-1}(j)-1}$ satisfy $g_1 \ast g_2 \in Y_j \ast \mathcal{A}_{p^{-1}(j)-1}$. Since $Y_j \ast \mathcal{A}_{p^{-1}(j)-1} \subset Y_j \ast \mathcal{A}_{p^{-1}(j)-1}^{ex}$, $g_1 \ast g_2 \in Y_j \ast \mathcal{A}_{p^{-1}(j)-1}^{ex}$. As $Y_j \ast \mathcal{A}_{p^{-1}(j)-1}^{ex}$ is a complete prime ideal in $\mathcal{A}_{p^{-1}(j)-1}^{ex}$, either $g_1 \in Y_j \ast \mathcal{A}_{p^{-1}(j)-1}^{ex}$ or $g_2 \in Y_j \ast \mathcal{A}_{p^{-1}(j)-1}^{ex}$. Without loss of generality, assume $g_1 \in Y_j \ast \mathcal{A}_{p^{-1}(j)-1}^{ex}$. As $g_1 \in \mathcal{A}_{p^{-1}(j)-1}$, by the claim,
\[ g_1 \in \mathcal{A}_{p^{-1}(j)-1} \cap (Y_j \ast \mathcal{A}_{p^{-1}(j)-1}^{ex})=Y_j \ast \mathcal{A}_{p^{-1}(j)-1}. \]
Thus, $Y_j \ast \mathcal{A}_{p^{-1}(j)-1}$ is a complete prime ideal of $\mathcal{A}_{p^{-1}(j)-1}$.
\end{proof}

We now proceed step by step to construct a preferred quantization of a symmetric integral Poisson-CGL extension. Given a symmetric integral Poisson-CGL extension
\[ B=\mathbf{k}[x_0;\theta_0,\delta_0][x_1;\theta_1,\delta_1] \cdots [x_n;\theta_n,\delta_n]_{(\lambda_0,\lambda_1,...,\lambda_n)}, \]
the subalgebra
\[ A:=\mathbf{k}[x_1;\theta_1,\delta_1] \cdots [x_n;\theta_n,\delta_n]_{(\lambda_1,...,\lambda_n)}\]
is also a symmetric integral Poisson-CGL extension. Let $p$ be the map defined in Theorem \ref{HPP-Poi-CGL} for $A$. Note that $B$ can be viewed as a Poisson-Cauchon extension (see Definition \ref{T-P-O}) $B=A[x_0;\theta,\delta]_{\lambda_0}$ of $A$, where
\begin{equation}
\label{theta,delta}
\theta(x_j)x_0=-\theta_j(x_0)x_j \ \text{and} \ \delta(x_j)=-\delta_j(x_0), \ \ j \in [1,n].
\end{equation}
Let $\eta$ be as in 2) of Definition \ref{T-P-O} for $A[x_0;\theta,\delta]_{\lambda_0}$. Assume $A$ has a preferred quantization
\[ \mathcal{A}=L[X_1;\sigma_1,\Delta_1]\cdots [X_n;\sigma_n,\Delta_n]_{(\lambda_1,...,\lambda_n)}. \]
Let $Y_{\mathcal{A}^{ex}}=(Y_1,...,Y_n)$ be the sequence of homogeneous prime elements of $\mathcal{A}^{ex}$. By Lemma \ref{log-can-qy} and by 3) of Definition \ref{Q-CGL}, for $i,j \in [1,n]$,
\[ Y_i \ast Y_j-q^{l_{i,j}}Y_j \ast Y_i \ \ \text{for some} \ \ l_{i,j} \in \mathbb{Z}. \]
Let
\[ \Gamma_q=\frac{L<Y^{\pm 1}_1,...,Y^{\pm 1}_n>}{(Y_i \ast Y_j-q^{l_{i,j}}Y_j \ast Y_i)}. \]
Notice that $\Gamma_q$ is the $L$-form of the quantum torus of $\mathcal{A}^{ex}$. By Theorem \ref{HP-CGL}, for $j \in [1,n]$, $Y_j=Y_{p(j)}X_j-C_j$, where $C_j \in \mathcal{A}^{ex}_{j-1}$. Since $Y_j$ and $Y_{p(j)}$ lie in $\mathcal{A}$, $C_j \in \mathcal{A}$, from which one sees that $C_j \in \mathcal{A}^{ex}_{j-1} \cap \mathcal{A}=\mathcal{A}_{j-1}$. It follows that $\mathcal{A} \subset \Gamma_q$.

Our goal is to construct a quantum-Ore extension $\mathcal{B}=\mathcal{A}[X_0;\sigma,\Delta]$ of $\mathcal{A}$ such that it can be written as a symmetric quantum-CGL extension
\[ \mathcal{B}=L[X_0;\sigma_0,\Delta_0][X_1;\sigma_1,\Delta_1]\cdots [X_n;\sigma_n,\Delta_n]_{(\lambda_0,\lambda_1,...,\lambda_n)} \]
that is a preferred quantization of $B$.

In order to construct $\mathcal{B}$, we need to choose $\sigma$ and $\Delta$ properly. The choice of $\sigma$ is fixed by $\theta$. Indeed, since $B$ is integral, $\theta(x_j)=l_jx_j$ for some integer $l_j$, $j \in [1,n]$. Then $\sigma$ has to be given by
\begin{equation}
\label{sigma}
\sigma(X_j)=q^{l_{j}}X_j, \ \ j \in [1,n].
\end{equation}
To construct $\Delta$, we first construct an element $D \in \Gamma_q$ and define $\Delta : \Gamma_q \rightarrow \Gamma_q$ by
\begin{equation}
\label{Delta}
\Delta(r)=D \ast r-\sigma(r) \ast D, \ \ r \in \Gamma_q,
\end{equation}
which has the property that
\[ \Delta(\mathcal{A}) \subset \mathcal{A}. \]
It is straightforward to check that $\Delta$ is a $\sigma$-derivation.

We now describe the construction of $D \in \Gamma_q$. When $\delta=0$, we just let $D=0$, which means $\Delta=0$. Now assume $\delta \neq 0$. Let $y_A=(y_1,...,y_n)$ be the sequence of homogeneous Poisson prime elements of $A$ and let
\[ \Gamma=\mathbf{k}[y^{\pm 1}_1,...,y^{\pm 1}_n] \]
be its Poisson torus. Define the $\mathbf{k}$-linear map $f : \Gamma \rightarrow \Gamma_q$ by
\[ f(y^v)=Y^v, \ \ v=(v_1,...,v_n) \in \mathbb{Z}^n, \]
where $y^v=y^{v_1}_1 \cdots y^{v_n}_n$ and $Y^v=Y^{v_1}_1 \ast \cdots \ast Y^{v_n}_n$. Consider now the presentation
\[ B=\mathbf{k}[x_1;\theta_1,\delta_1] \cdots [x_n;\theta_n,\delta_n][x_0;\theta,\delta]_{(\lambda_1,...,\lambda_n,\lambda_0)}, \]
as a Poisson-CGL extension, where $\theta$ and $\delta$ are defined in (\ref{theta,delta}). As $\delta \neq 0$, by Theorem \ref{HPP-Poi-CGL}, there exists a unique $k \in [1,n]$ such that $\delta(y_k) \neq 0$ and $y_k$ is a homogeneous Poisson prime element of $A$. Let $m$ be the largest nonnegative integer such that $p^{m}(k) \in [1,n]$. For $i \in [0,m-1]$, let $B_{p^{i}(k)}$ be as in Lemma-Notation \ref{qmon}. It is easy to see that those $B_{p^{i}(k)}$ lie in $\Gamma_q$. For $j \in [1,n]$, let $\eta_j$ be as in 3) of Definition \ref{Q-CGL} for $\mathcal{A}$ and let $\omega_j=q^{\eta_j}$ (note that those $\omega_j$ are exactly those in 3) of Definition \ref{T-CGL} for $\mathcal{A}^{ex}$). We define $D$ as follows:
\[
D=\left( \sum^m_{i=0} (\omega_{p^{i}(k)}Y^{-1}_{p^{i}(k)} \ast B_{p^{i}(k)}) \ast (\omega_{p^{i+1}(k)}Y^{-1}_{p^{i+1}(k)} \ast B_{p^{i+1}(k)}) \ast \cdots \ast \right.
\]
\begin{equation}
\label{Dis}
\left. (\omega_{p^{m-1}(k)}Y^{-1}_{p^{m-1}(k)} \ast B_{p^{m-1}(k)}) \right) \ast f\left(\frac{\delta(x_{p^{m}(k)})}{\eta x_{p^{m}(k)}}\right) \in \Gamma_q.
\end{equation}
We have the following two theorems, which are proven in $\S$\ref{subsec-proofs}.

\begin{theorem}
\label{one-ste-qua}
Given a symmetric integral Poisson-CGL extension
\[ B=\mathbf{k}[x_0;\theta_0,\delta_0][x_1;\theta_1,\delta_1] \cdots [x_n;\theta_n,\delta_n]_{(\lambda_0,\lambda_1,...,\lambda_n)}, \]
assume
\[ \mathcal{A}=L[X_1;\sigma_1,\Delta_1]\cdots [X_n;\sigma_n,\Delta_n]_{(\lambda_1,...,\lambda_n)} \]
is a preferred quantization of
\[ A:=\mathbf{k}[x_1;\theta_1,\delta_1] \cdots [x_n;\theta_n,\delta_n]_{(\lambda_1,...,\lambda_n)}. \]
Let $\sigma$ be defined by (\ref{sigma}). When $\delta=0$, let $D=0$. When $\delta \neq 0$, let $D \in \Gamma_q$ be defined by (\ref{Dis}). Let $\Delta$ be defined by $(\ref{Delta})$. Then $\Delta(\mathcal{A}) \subset \mathcal{A}$ and $\mathcal{B}=\mathcal{A}[X_0;\sigma,\Delta]$ is a quantum-Ore extension of $\mathcal{A}$ such that $\mathcal{B}$ can be written as a symmetric quantum-CGL extension
\[ \mathcal{B}=L[X_0;\sigma_0,\Delta_0][X_1;\sigma_1,\Delta_1]\cdots [X_n;\sigma_n,\Delta_n]_{(\lambda_0,\lambda_1,...,\lambda_n)} \]
that is a preferred quantization of $B$.
\end{theorem}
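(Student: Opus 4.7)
The proof plan splits into three main pieces: verifying that $D$ and $\Delta$ are well-defined on $\Gamma_q$; proving the crucial containment $\Delta(\mathcal{A}) \subset \mathcal{A}$; and combining this with symmetry-and-reduction arguments to produce the preferred symmetric quantum-CGL quantization $\mathcal{B}$ of $B$.

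For the first piece, each monomial factor $\omega_{p^j(k)} Y^{-1}_{p^j(k)} \ast B_{p^j(k)}$ appearing in (\ref{Dis}) has $\mathbb{T}$-weight zero, because Lemma-Notation \ref{qmon} gives $B_{p^j(k)}$ the same weight as $Y_{p^j(k)}$ and $Y^{-1}_{p^j(k)}$ cancels it; the trailing factor $f(\delta(x_{p^m(k)})/(\eta x_{p^m(k)}))$ has weight $\lambda_0$ since $\delta$ shifts weights by $\lambda_0$. Summing over $i$ preserves the weight, so $D$ lies in $\Gamma_q$ and is $\mathbb{T}$-homogeneous of weight $\lambda_0$. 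Extending $\sigma = q^{h_0}\cdot$ from $\mathcal{A}$ to $\Gamma_q$ is canonical via the torus action on the $Y_j$'s, and $\Delta(r) = D \ast r - \sigma(r) \ast D$ is automatically a $\sigma$-derivation of $\Gamma_q$ by a standard Leibniz-style computation.

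The main obstacle is the containment $\Delta(\mathcal{A}) \subset \mathcal{A}$. I would reduce it to showing $\Delta(X_j) \in \mathcal{A}_{[1,j-1]}$ for each $j \in [1,n]$, which is the quantum analog of the Poisson identity $\delta(x_j) = \{d,x_j\} - \theta(x_j)d \in \mathbf{k}[x_1,\ldots,x_{j-1}]$ (the membership following from (\ref{dis1}) and the symmetric condition on $B$). The formula (\ref{Dis}) for $D$ is engineered to be the telescoped sum $D^{(0)} = \sum_{i=0}^{m}(D^{(i)} - D^{(i+1)})$ obtained by iterating part 2) of Lemma \ref{qd-M}, which is precisely the form the distinguished element would take for a hypothetical symmetric CGL Cauchon extension $\mathcal{A}^{ex}[X_0;\sigma,\Delta]_{\lambda_0}$. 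The verification proceeds by explicit computation: expanding $\Delta(X_j) = D \ast X_j - q^{l_j}X_j \ast D$ in $\Gamma_q$ term by term and using the commutation relations there, one shows that the result lies in $\mathcal{A}^{ex}_{[1,j-1]}$. Since each ingredient of $D$ --- the scalars $\omega_{p^j(k)} = q^{\eta_{p^j(k)}}$, the elements $Y^{-1}_{p^j(k)}$ and $B_{p^j(k)}$, and the image under $f$ of the final fraction --- lies in $\Gamma_q$ with $L$-coefficients rather than only $\mathbb{K}$-coefficients, the output $\Delta(X_j)$ lies in $\mathcal{A}_{[1,j-1]}$ rather than merely $\mathcal{A}^{ex}_{[1,j-1]}$.

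For the remaining pieces, $\mathcal{B} = \mathcal{A}[X_0;\sigma,\Delta]$ is automatically free over $L$ as an Ore extension; commutativity of $\mathcal{B}/(q-1)\mathcal{B}$ reduces to $[X_0,X_j] = (q^{l_j}-1)X_j \ast X_0 + \Delta(X_j) \in (q-1)\mathcal{B}$, the first summand visibly so and the second because reducing $\Delta(X_j)$ modulo $(q-1)$ yields $dx_j - x_j d = 0$ in the commutative $A$. The semiclassical identification $\mathcal{B}/(q-1)\mathcal{B} \cong B$ then follows from the computation $\Delta(X_j)/(q-1)|_{q=1} = \{d,x_j\} - \theta(x_j)d = \delta(x_j)$, which is (\ref{dis1}) applied to the Poisson distinguished element $d$. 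The symmetric presentation with $X_0$ first is produced by inverting the Ore relation to obtain $\sigma_j(X_0) = q^{-l_j}X_0$ and $\Delta_j(X_0) = -q^{-l_j}\Delta(X_j) \in \mathcal{A}_{[1,j-1]} = \mathcal{B}_{[1,j-1]}$; the co-character $h'_0$ from the symmetric data of $B$ forces $l_j = \lambda_j(h'_0)$, matching Definition \ref{Q-sym}, and nilpotency of the new $\Delta_j$'s on $X_0$ reduces to nilpotency already known in $\mathcal{A}$. Finally, the level sets of $\mathcal{B}^{ex}$ match those of $B$ because the two $p$-maps agree via the semiclassical correspondence, and the inductive formula from Theorem \ref{HP-CGL} keeps $Y_{\mathcal{B}^{ex}}$ inside $\mathcal{B}$ since each relevant $C_j$ lies in $\mathcal{A} \subset \mathcal{B}$ by the hypothesis that $\mathcal{A}$ is already a preferred quantization.
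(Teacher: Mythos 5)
The central step of your argument---establishing $\Delta(\mathcal{A})\subset\mathcal{A}$---has two genuine gaps. First, the reasoning is circular: you justify the formula for $D$ by saying it ``is precisely the form the distinguished element would take for a hypothetical symmetric CGL Cauchon extension $\mathcal{A}^{ex}[X_0;\sigma,\Delta]_{\lambda_0}$'' via Lemma~\ref{qd-M}, but that lemma describes distinguished elements of a CGL extension that \emph{already exists}, whereas here the entire point is to prove $\Delta(\mathcal{A}^{ex})\subset\mathcal{A}^{ex}$ so that such an extension exists at all. The paper instead proves the containment directly, using the Poisson-side Lemma~\ref{d-y} together with the key technical Lemma~\ref{lie in}~2) (which shows $Y_{p^i(k)}\ast D^{(i)}\in\mathcal{A}_{p^i(k)}$ by a nested induction), and then a delicate two-stage clearing-of-denominators argument: first conclude $\Delta(a)\in\Gamma'_q[Y_k]$ (no negative powers of $Y_k$), then show $Y_k\ast\Delta(a)\in\mathcal{A}$, and finally invoke primality of $Y_k$ in $\mathcal{A}$ (Lemma~\ref{prime}) to peel off $Y_k$. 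Second, even granting $\Delta(X_j)\in\mathcal{A}^{ex}_{[1,j-1]}$, your passage to $\mathcal{A}_{[1,j-1]}$ by observing that the ingredients of $D$ have ``$L$-coefficients'' conflates two different $L$-lattices in $\Gamma_q\otimes_L\mathbb{K}$: the one spanned by $Y$-monomials ($\Gamma_q$) and the one spanned by $X$-monomials ($\mathcal{A}$). That $\mathcal{A}^{ex}\cap\Gamma_q=\mathcal{A}$ is not a given; the paper handles exactly this issue inside the proof of Lemma~\ref{prime} by a $\mathbb{N}^n$-degree argument comparing leading coefficients, and you have no analogue of it.

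The final clause of your proof also breaks down: you assert that $Y_{\mathcal{B}^{ex}}\subset\mathcal{B}$ because ``each relevant $C_j$ lies in $\mathcal{A}\subset\mathcal{B}$,'' but the $C$'s in the CGL presentation of $\mathcal{B}^{ex}$ with $X_0$ adjoined are new: the prime element $Y'_{p^i(k)}$ takes the form $X_0\ast Y_{p^i(k)}+\omega(1-\omega)^{-1}\Delta(Y_{p^i(k)})$ (this identification is itself a nontrivial degree comparison in the paper), and showing the correction term lies in $\mathcal{A}_{p^i(k)}$ again requires Lemma~\ref{lie in}~2). Likewise the claim that level sets match ``because the $p$-maps agree via the semiclassical correspondence'' skips the degree-matching step via Lemmas~\ref{one-one}~2) and~\ref{qone-one}~2) that the paper uses. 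The higher-level plan---weight bookkeeping for $D$, freeness of $\mathcal{B}$ over $L$, commutativity of the reduction, the symmetric re-presentation with $X_0$ first---is sound and matches the paper, but the hard core of the theorem is where your argument is either circular or merely asserted.
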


Preferred quantization is unique in the following sense:

\begin{theorem}
\label{unique}
Let $\mathcal{B}'=\mathcal{A}[X'_0;\sigma,\Delta']$ be a quantum-Ore extension of $\mathcal{A}$. If it can be written as a symmetric quantum-CGL extension that is a preferred quantization of
\[ B=\mathbf{k}[x_0;\theta_0,\delta_0][x_1;\theta_1,\delta_1] \cdots [x_n;\theta_n,\delta_n]_{(\lambda_0,\lambda_1,...,\lambda_n)}, \]
then
\[ \Delta'=\epsilon \Delta, \ \text{for some} \ \epsilon \in L \ \ \text{satisfying} \ \ \epsilon|_{q=1}=1. \]
\end{theorem}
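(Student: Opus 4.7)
The plan is to compare the unique distinguished elements of $\mathcal{B}^{ex}$ and $\mathcal{B}'^{ex}$ in the division ring of fractions $\mathcal{F}$ of $\mathcal{A}^{ex}$, and then pin down the proportionality constant via the semiclassical limit. I treat the main case $\delta\neq 0$ first.

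First, by Lemma \ref{qdis-uni} applied to the Cauchon extensions $\mathcal{A}^{ex}[X_0;\sigma,\Delta]$ and $\mathcal{A}^{ex}[X'_0;\sigma,\Delta']$, there are unique distinguished elements $D,D'\in\mathcal{F}$ satisfying $\Delta(r)=Dr-\sigma(r)D$ and $\Delta'(r)=D'r-\sigma(r)D'$ for all $r\in\mathcal{F}$. Consequently, proving $D'=\epsilon D$ for a central scalar $\epsilon$ yields $\Delta'=\epsilon\Delta$. Using the preferred relations $Y_0=Y_{p(0)}(X_0-D)\in\mathcal{B}$ and $Y'_0=Y_{p(0)}(X'_0-D')\in\mathcal{B}'$, I deduce $Y_{p(0)}\ast D,\,Y_{p(0)}\ast D'\in\mathcal{A}$, so both $D$ and $D'$ lie in the common $L$-form $\Gamma_q$ of the quantum torus of $\mathcal{A}^{ex}$.

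Next, I apply the telescoping recurrence of Lemma \ref{qd-M}: its multiplicative factors $\omega_{p^j(k)}Y_{p^j(k)}^{-1}\ast B_{p^j(k)}$ depend only on intrinsic data of $\mathcal{A}^{ex}$ (via the monomials $B_{p^j(k)}$ of Lemma-Notation \ref{qmon}), so the common prefactor
\[
P=\sum_{i=0}^{m}\prod_{j=i}^{m-1}\bigl(\omega_{p^j(k)}Y_{p^j(k)}^{-1}\ast B_{p^j(k)}\bigr)
\]
appears in both decompositions: $D=P\ast D^{(m)}$ and $D'=P\ast(D')^{(m)}$. Since $P$ is a nonzero element of $\Gamma_q\otimes_L\mathbb{K}$, it suffices to prove $(D')^{(m)}=\epsilon D^{(m)}$ for some $\epsilon\in L$ with $\epsilon|_{q=1}=1$.

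The main obstacle is this base-case identification. Because $p(p^m(k))=0$ forces $Y_{p^m(k)}=X_{p^m(k)}$, the claim reduces to $\Delta'(X_{p^m(k)})=\epsilon\Delta(X_{p^m(k)})$. By the symmetric quantum-CGL presentations of $\mathcal{B}$ and $\mathcal{B}'$ (the relation $\Delta(X_j)=-(\Delta_j\circ\sigma_j^{-1})(X_0)$ used in Lemma \ref{qk-p(k)}), both $\Delta(X_{p^m(k)})$ and $\Delta'(X_{p^m(k)})$ lie in $\mathcal{A}_{p^m(k)-1}$ and are $\mathbb{T}$-homogeneous of the same weight $\lambda_0+\lambda_{p^m(k)}$; moreover, they satisfy the same prescribed $\sigma$-commutation relations with the prime elements $Y_1,\ldots,Y_{p^m(k)-1}$ of $\Gamma_q$, derived exactly as in the proof of Lemma-Notation \ref{qmon}. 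The uniqueness principle \cite[Lemma 8.14]{QQ2} then forces each to be an $L$-scalar multiple of a common monomial, giving $\Delta'(X_{p^m(k)})=\epsilon\Delta(X_{p^m(k)})$ for some $\epsilon\in L$. To finish, the semiclassical limits of $D$ and $D'$ both equal the unique Poisson distinguished element of Lemma \ref{dis-uni}, forcing $\epsilon|_{q=1}=1$. The edge case $\delta=0$ (in which $\Delta=D=0$) is handled separately: the level-set matching in the preferred condition, combined with the vanishing-propagation in Lemma \ref{qk-p(k)}, forces $\Delta'(X_j)=0$ for each $j$, and hence $\Delta'=0=\epsilon\Delta$.
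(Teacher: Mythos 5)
Your overall strategy mirrors the paper's: reduce to the base case via the telescoping recurrence of Lemma~\ref{qd-M}, pin down the base monomial up to a scalar $\epsilon$, and read off $\epsilon|_{q=1}=1$ from the semiclassical limit; the $\delta=0$ case is handled the same way, and the deduction that $D, D'$ lie in $\Gamma_q$ from preferred condition 3) is correct.

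There is, however, a genuine gap in the step that asserts $\epsilon \in L$. You argue that $\Delta(X_{p^m(k)})$ and $\Delta'(X_{p^m(k)})$ both lie in $\mathcal{A}_{p^m(k)-1}$ and, by the monomial-uniqueness principle, are $L$-multiples of a common monomial $Y^w$, say $\beta Y^w$ and $\beta' Y^w$ with $\beta,\beta'\in L$, and then conclude $\epsilon=\beta'/\beta\in L$. This does not follow from $\beta,\beta'\in L$ alone: the coefficient $\beta$ of $\Delta(X_{p^m(k)})=D^{(m)}\ast X_{p^m(k)}-\sigma(X_{p^m(k)})\ast D^{(m)}$ is a $\mathbf{k}^\times$-scalar times a factor of the form $1-q^s$, which equals $(q-1)$ times a non-unit of $L$ whenever $|s|>1$, so $\beta$ need not be an $L$-unit and $\beta'/\beta$ can fall outside $L$. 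The paper avoids this by constraining the distinguished element itself rather than $\Delta'(X_{p^m(k)})$: preferred condition 3) for $\mathcal{B}'$, combined with~(\ref{commute}), gives $Y_{p^m(k)}\ast(D')^{(m)}=\epsilon\, Y_{p^m(k)}\ast D^{(m)}\in\mathcal{A}_{p^m(k)}$, and since $Y_{p^m(k)}\ast D^{(m)}=Y_{p^m(k)}\ast f(d^{(m)})$ is a single monomial in $\Gamma_q$ whose coefficient is a $\mathbf{k}^\times$-scalar times a power of $q$ (hence an $L$-unit), membership in $\mathcal{A}$ forces $\epsilon\in L$. In fact your own earlier observation that $Y_{p(0)}\ast D$ and $Y_{p(0)}\ast D'$ lie in $\mathcal{A}$ already contains the needed ingredient: the coefficients of $Y_{p(0)}\ast D$ in the $Y$-monomial $L$-basis of $\Gamma_q$ are $L$-units, so $\epsilon\,Y_{p(0)}\ast D=Y_{p(0)}\ast D'\in\mathcal{A}$ gives $\epsilon\in L$ directly. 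The final step should therefore be routed through the distinguished element (or through $Y_{p(0)}\ast D'$), not through $\Delta'(X_{p^m(k)})$.
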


By induction and by Theorem \ref{one-ste-qua}, we have the main theorem of this paper:

\begin{theorem}
\label{pre-quantization}
Every symmetric integral Poisson-CGL extension has a preferred quantization.
\end{theorem}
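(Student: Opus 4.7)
The plan is to prove Theorem \ref{pre-quantization} by induction on the length $n$ of the symmetric integral Poisson-CGL extension, with Theorem \ref{one-ste-qua} supplying the entire content of the inductive step.

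For the base case $n=0$, the symmetric integral Poisson-CGL extension is just $\mathbf{k}$ with the zero Poisson bracket, and the $L$-algebra $L$ itself is trivially a preferred quantization (with no generators to list, the conditions in Definition \ref{per-qua} are vacuous). For $n=1$, one may equivalently start with $\mathcal{A}=L[X_1]$, which quantizes $\mathbf{k}[x_1;0,0]_{(\lambda_1)}$, and it is immediate that the sole homogeneous prime element is $Y_1=X_1\in\mathcal{A}$, so that Definition \ref{per-qua} is satisfied.

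For the inductive step, let
\[
B=\mathbf{k}[x_1;\theta_1,\delta_1][x_2;\theta_2,\delta_2]\cdots[x_n;\theta_n,\delta_n]_{(\lambda_1,\lambda_2,\ldots,\lambda_n)}
\]
be a symmetric integral Poisson-CGL extension of length $n\ge 1$. I would first verify that the subalgebra
\[
A:=\mathbf{k}[x_2;\theta_2,\delta_2]\cdots[x_n;\theta_n,\delta_n]_{(\lambda_2,\ldots,\lambda_n)},
\]
with the induced Poisson structure and $\mathbb{T}$-action, is again a symmetric integral Poisson-CGL extension, of length $n-1$. This is a routine verification of the three conditions in Definition \ref{sym-int-Poi-CGL}: condition 1) restricts immediately since the image subalgebras $\mathbf{k}[x_{i+1},\ldots,x_{j-1}]$ already lie in $A$ for $2\le i<j\le n$; condition 2) is inherited with the weights $\lambda_2,\ldots,\lambda_n$; and condition 3), together with the integrality and symmetry cocharacters, is witnessed by the same $h_2,\ldots,h_n$ and $h'_2,\ldots,h'_n$ already present for $B$ (restricted to $A$). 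Once $A$ is identified as a symmetric integral Poisson-CGL extension of length $n-1$, the inductive hypothesis yields a preferred quantization $\mathcal{A}$ of $A$.

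Next, I would recognize $B$ as fitting the hypothesis of Theorem \ref{one-ste-qua} after a harmless relabeling: the variable called $x_0$ in the statement of Theorem \ref{one-ste-qua} is played by $x_1$ here, and the extension $B$ is precisely a length-one Poisson-Cauchon extension $A[x_1;\theta,\delta]_{\lambda_1}$ of $A$ with $\theta,\delta$ read off from the iterated Poisson-Ore presentation of $B$ and symmetry (via the formula $\theta(x_j)x_1=-\theta_j(x_1)x_j$ and $\delta(x_j)=-\delta_j(x_1)$ for $j\in[2,n]$, which is exactly \eqref{theta,delta}). Applying Theorem \ref{one-ste-qua} to the pair $(\mathcal{A},B)$ produces a quantum-Ore extension $\mathcal{B}=\mathcal{A}[X_1;\sigma,\Delta]$ (with $\sigma$ from \eqref{sigma}, $D\in\Gamma_q$ from \eqref{Dis} or $D=0$ when $\delta=0$, and $\Delta$ from \eqref{Delta}) which is a preferred quantization of $B$, completing the induction.

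There is essentially no obstacle at the level of Theorem \ref{pre-quantization} itself: all the genuine work — building $D$ explicitly from the monomials $B_{p^i(k)}$ of Lemma-Notation \ref{qmon} via the telescoping structure mirroring Lemma \ref{d-M} and Lemma \ref{qd-M}, checking $\Delta(\mathcal{A})\subset\mathcal{A}$, verifying that $\mathcal{B}$ is symmetric quantum-CGL, and confirming that $Y_{\mathcal{B}^{ex}}$ lies in $\mathcal{B}$ with the matching level sets — is packaged inside Theorem \ref{one-ste-qua}, whose proof is deferred to $\S$\ref{subsec-proofs}. The only mild point to be careful about in the induction itself is the bookkeeping of how the subalgebra $A$ inherits a symmetric integral Poisson-CGL presentation from $B$, which is straightforward.
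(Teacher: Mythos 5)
Your proof is correct and follows exactly the paper's route: the paper's own argument consists of the single remark that Theorem \ref{pre-quantization} follows "by induction and by Theorem \ref{one-ste-qua}," and your write-up merely makes the base case and the relabeling in the inductive step explicit. Your observation that the subalgebra obtained by deleting the first variable inherits a symmetric integral Poisson-CGL presentation is exactly the point the paper records (without proof) at the start of \S\ref{subsec-qua}, so nothing is missing.
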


\subsection{Proofs of Theorems \ref{one-ste-qua} and \ref{unique}} \label{subsec-proofs}

Identify $A$ with the semiclassical limit of $\mathcal{A}$ and we have the following lemma:

\begin{lemma}
\label{y-Y}
For $j \in [1,n]$, $y_j=Y_j+(q-1)\mathcal{A}$.
\end{lemma}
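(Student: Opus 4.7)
The plan is to prove the lemma by induction on $j$, matching the inductive formulas
\[
y_j = y_{p(j)} x_j - c_j, \qquad c_j = \delta_j(y_{p(j)})/\eta_j
\]
from Theorem \ref{HPP-Poi-CGL} term by term against
\[
Y_j = Y_{p(j)} X_j - C_j, \qquad C_j = -(1-\omega_j)^{-1}(\Delta_j \circ \sigma_j^{-1})(Y_{p(j)}), \quad \omega_j = q^{\eta_j},
\]
from Theorem \ref{HP-CGL}. The two recursions share the same map $p$, because by the preferred-quantization hypothesis $A$ and $\mathcal{A}^{ex}$ have the same level sets and $p$ is determined by these. Write $\pi : \mathcal{A} \twoheadrightarrow A$ for the reduction $a \mapsto a + (q-1)\mathcal{A}$; by Definition \ref{ite-qua}, $\pi(X_j) = x_j$. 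When $p(j) = 0$, we have $y_j = x_j$ and $Y_j = X_j$, so $\pi(Y_j) = y_j$ is immediate.

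For the inductive step with $p(j) \neq 0$, the key point is to show $\pi(C_j) = c_j$. Since $Y_{p(j)}$ is $\mathbb{T}$-homogeneous of some weight $\mu_{p(j)}$ and $\sigma_j$ acts on weight-$\mu$ elements by the scalar $q^{\mu(h_j)}$, one has $(\Delta_j \circ \sigma_j^{-1})(Y_{p(j)}) = q^{-\mu_{p(j)}(h_j)} \Delta_j(Y_{p(j)})$. By Proposition \ref{Q-O-P-O}, $\Delta_j$ carries $\mathcal{A}_{j-1}$ into $(q-1)\mathcal{A}_{j-1}$; writing $\Delta_j(Y_{p(j)}) = (q-1) E_j$ inside the free $L$-module $\mathcal{A}_{j-1}$, the same proposition yields $\pi(E_j) = \delta_j(\pi(Y_{p(j)})) = \delta_j(y_{p(j)})$, where the last equality is the inductive hypothesis. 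The defining relation for $C_j$ then rearranges to
\[
[\eta_j]_q\, C_j \;=\; q^{-\mu_{p(j)}(h_j)}\, E_j \qquad \text{in } \mathcal{A}_{j-1},
\]
where $[\eta_j]_q := (q^{\eta_j}-1)/(q-1) \in L$ evaluates to $\eta_j$ at $q=1$. Applying $\pi$ yields $\eta_j \, \pi(C_j) = \delta_j(y_{p(j)})$, i.e.\ $\pi(C_j) = c_j$, and combining with $\pi(Y_{p(j)}) = y_{p(j)}$ and $\pi(X_j) = x_j$ closes the induction: $\pi(Y_j) = y_{p(j)} x_j - c_j = y_j$.

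The main technical subtlety is that $(1-\omega_j)^{-1}$ does not lie in $L$, so the defining formula for $C_j$ a priori produces only an element of $\mathcal{A}^{ex}_{j-1}$, whereas the reduction mod $q-1$ requires $C_j \in \mathcal{A}_{j-1}$. This containment is forced by the preferred-quantization assumption: $Y_j, Y_{p(j)}, X_j$ all lie in $\mathcal{A}$, and the $L$-PBW basis $\{X_1^{k_1} \cdots X_n^{k_n}\}$ of $\mathcal{A}$ identifies $C_j = Y_{p(j)} X_j - Y_j$ as the $X_j$-degree-$0$ component of an element with $L$-coefficients, hence as an element of $\mathcal{A}_{j-1}$. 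Once this is in place, dividing the identity $(1-q^{\eta_j}) C_j = -q^{-\mu_{p(j)}(h_j)}(q-1) E_j$ by the non-zero-divisor $q-1$ inside $\mathcal{A}_{j-1}$ is legitimate, and the remaining arithmetic is routine.
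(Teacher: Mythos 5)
Your proof is correct, but it takes a genuinely different route from the paper's. The paper argues structurally: it shows $\pi(Y_l)$ (where $\pi$ denotes reduction mod $q-1$) is a homogeneous Poisson element of $A_l$ of degree one in $x_l$, invokes Lemma \ref{prime factor} to write it as a product of homogeneous Poisson prime elements, identifies the unique degree-one factor as a scalar multiple of $y_l$ via Theorem \ref{HPP-Poi-CGL}, and finally pins the scalar down to $1$ by comparing the degree-one coefficients in $x_l$. Your argument instead matches the two recursions $y_j = y_{p(j)}x_j - c_j$ and $Y_j = Y_{p(j)}X_j - C_j$ coefficient by coefficient, reducing everything to the observation that $(1-\omega_j)^{-1}(q-1)$ is the reciprocal of the $q$-integer $[\eta_j]_q$, which specializes to $1/\eta_j$ at $q=1$; the identity $\pi(C_j)=c_j$ then drops out of Proposition \ref{Q-O-P-O}. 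The two approaches trade off differently against the preferred-quantization hypotheses: you lean explicitly on condition 2) of Definition \ref{per-qua} (same level sets, hence the same map $p$ on both sides) and on condition 3) (to get $C_j \in \mathcal{A}_{j-1}$ and make the division by $q-1$ legal inside $\mathcal{A}_{j-1}$), whereas the paper uses only condition 3) directly and recovers the equality of $p$-maps implicitly from the prime factorization and the degree-one coefficient comparison. Your route is more elementary and computational; the paper's is shorter and re-uses the UFD machinery already set up in Section 2, and has the minor advantage of not needing to track $q$-integers or the weight $\mu_{p(j)}(h_j)$ explicitly. Both are sound.
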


\begin{proof}
Clearly, $y_1=x_1=X_1+(q-1)\mathcal{A}=Y_1+(q-1)\mathcal{A}$. Assume Lemma \ref{y-Y} holds for $y_1,...,y_{l-1}$ for some $l \in [2,n]$. It is straightforward to see that $Y_l+(q-1)\mathcal{A}$ is a homogeneous Poisson element of $A_l$. Then $Y_l+(q-1)\mathcal{A}$ is either a constant or a product of homogeneous Poisson prime elements of $A_l$ by Lemma \ref{prime factor}. By the definition of $Y_l$ and by the inductive assumption,
\[ (Y_l+(q-1)\mathcal{A})-y_{p(l)}x_l \in A_{l-1}. \]
It follows that $Y_l+(q-1)\mathcal{A}$ must have a prime factor that is a degree one polynomial of $x_l$ with coefficients in $A_{l-1}$. By Theorem \ref{HPP-Poi-CGL}, that prime factor has to be a scalar multiple of $y_l$. Then $Y_l+(q-1)\mathcal{A}=ay_l$ for some $a \in A_{l-1}$. Since $y_l-y_{p(l)}x_l \in A_{l-1}$, $ay_l-y_l \in A_{l-1}$. It follows that $a=1$ and $y_l=Y_l+(q-1)\mathcal{A}$. By induction, $y_j=Y_j+(q-1)\mathcal{A}$ for $j \in [1,n]$.
\end{proof}

Consider the case when $\delta \neq 0$. Recall that there exists a unique $k \in [1,n]$ such that $\delta(y_k) \neq 0$ and $y_k$ is a homogeneous Poisson prime element of $A$. Let $m$ be the largest nonnegative integer such that $p^{m}(k) \in [1,n]$. Recall from $\S$\ref{subsec-sym-Poi-CGL} that, for $j \in [1,n]$, $A_{j}[x_0;\theta,\delta]_{\lambda_0}$ is a Poisson-Cauchon of $A_j$ and
\[ \mathbf{k}[x_1;\theta_1,\delta_1] \cdots [x_j;\theta_j,\delta_j][x_0;\theta,\delta]_{(\lambda_1,...,\lambda_j,\lambda_0)} \]
is a Poisson-CGL extension. Here $\theta$ and $\delta$ are the restrictions of $\theta$ and $\delta$ in (\ref{theta,delta}) to $A_j$. For $i \in [0,m]$, let $d^{(i)}$ be the distinguished element with respect to $A_{p^{i-1}(k)-1}[x_0;\theta,\delta]_{\lambda_0}$ ($p^{-1}(k):=n+1$) and let $d^{(m+1)}=0$. By 1) of Lemma \ref{d-M} and by 1) of Lemma \ref{one-one}, $d^{(i)}-d^{(i+1)}$, $d^{(i+1)}-d^{(i+2)}$,..., $d^{(m)}-d^{(m+1)}$ are the monomial terms of $d^{(i)} \in \Gamma$ for $i \in [0,m]$. In particular, $d^{(m)}$ is a monomial in $\Gamma$. Let $b_{p^{i}(k)}$ be as in Lemma-Notation \ref{mon}.

\begin{lemma}
\label{d-y}
Let $i \in [0,m]$. For $l \in [1,p^{i-1}(k)-1]$,
\begin{align*}
&\{d^{(i)}-d^{(i+1)}, \ y_l\}=\theta(y_l)(d^{(i)}-d^{(i+1)}), \ \ l \neq p^{i}(k), \ \text{and} \\
&\{d^{(i)}-d^{(i+1)}, \ y_{p^{i}(k)}\}=(\theta(y_{p^{i}(k)})+\eta y_{p^i(k)})(d^{(i)}-d^{(i+1)}).
\end{align*}
\end{lemma}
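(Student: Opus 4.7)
The key observation, provided by Lemma \ref{d-M}(1), is that each difference $d^{(j)} - d^{(j+1)}$ is a single monomial $y^{v^{(j)}}$ in the Poisson torus $\Gamma$ (up to a nonzero scalar), so that
\[
\{d^{(i)} - d^{(i+1)}, y_l\} = \Lambda_Y(v^{(i)}, e_l)\, y_l\, (d^{(i)} - d^{(i+1)}).
\]
Since $\theta(y_l)$ is a scalar multiple of $y_l$ (as $y_l$ is $\mathbb{T}$-homogeneous and $\theta = \partial h_0|_A$ for some $h_0 \in \mathfrak{t}$), both desired equations amount to identifying this single scalar.

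The plan is to compare two expressions for $\{d^{(i)}, y_l\}$. Since $d^{(i)}$ is distinguished with respect to $A_{p^{i-1}(k)-1}[x_0;\theta,\delta]_{\lambda_0}$ and $y_l \in A_{p^{i-1}(k)-1}$ for $l \in [1, p^{i-1}(k)-1]$, the defining identity (\ref{dis1}) gives
\[
\{d^{(i)}, y_l\} = \theta(y_l)\, d^{(i)} + \delta(y_l).
\]
On the other hand, the telescoping $d^{(i)} = \sum_{j=i}^{m}(d^{(j)} - d^{(j+1)})$ (with $d^{(m+1)} = 0$) combined with Lemma \ref{d-M}(1) expands the bracket as
\[
\{d^{(i)}, y_l\} = \sum_{j=i}^{m} \Lambda_Y(v^{(j)}, e_l)\, y_l\, (d^{(j)} - d^{(j+1)}),
\]
a sum over pairwise distinct monomials: distinctness follows from $v^{(j)}_{p^j(k)} = -1$ while $v^{(j')}_{p^j(k)} = 0$ for $j' > j$, because $v^{(j')}$ has support inside $[1, p^{j'}(k)] \subset [1, p^j(k)-1]$.

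Next I would determine $\delta(y_l)$. Using Lemma \ref{k-p(k)} iteratively together with Theorem \ref{HPP-Poi-CGL} applied to the prefix extension $\mathbf{k}[x_1;\theta_1,\delta_1]\cdots[x_l;\theta_l,\delta_l][x_0;\theta,\delta]_{(\lambda_1,\ldots,\lambda_l,\lambda_0)}$, one shows that $\delta(y_l) = 0$ whenever $l$ lies outside the level set $K := \{k, p(k), \ldots, p^m(k)\}$ of $k$. For $l = p^{i+s}(k) \in K$ with $s \in [0, m-i]$, the identity $d^{(i+s)} = \delta(y_{p^{i+s}(k)})/(\eta\, y_{p^{i+s}(k)})$ yields
\[
\delta(y_l) = \eta\, y_l\, d^{(i+s)} = \eta\, y_l \sum_{j=i+s}^{m} (d^{(j)} - d^{(j+1)}).
\]

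Substituting into the distinguished identity and matching coefficients on the linearly independent monomials $y_l(d^{(j)} - d^{(j+1)})$, one obtains: for $l \notin K$, $\Lambda_Y(v^{(j)}, e_l)\, y_l = \theta(y_l)$ for every $j \in [i, m]$; for $l = p^{i+s}(k) \in K$, the same identity for $j \in [i, i+s-1]$, and $\Lambda_Y(v^{(j)}, e_l)\, y_l = \theta(y_l) + \eta y_l$ for $j \in [i+s, m]$. Specializing to $j = i$: if $l = p^i(k)$ (so $s = 0$), one recovers the second equation of the lemma, while in every other case ($l \notin K$, or $l = p^{i+s}(k)$ with $s \geq 1$) one has $i < i+s$ and recovers the first equation. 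The principal difficulty is the level-set bookkeeping needed to prove $\delta(y_l) = 0$ for $l \notin K$, since $y_l$ need not be a Poisson prime of $A$ itself; one has to locate, inside each prefix $A_l$, the unique homogeneous Poisson prime on which $\delta$ does not vanish (namely the maximum of $K \cap [1,l]$ when non-empty, and none otherwise) and then propagate $\delta(y_l) = 0$ down the level set of $l$ via Lemma \ref{k-p(k)}.
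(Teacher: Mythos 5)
Your proposal is correct in outline, but it takes a genuinely different and noticeably heavier route than the paper. You expand $d^{(i)}$ into its distinct monomial terms $d^{(j)}-d^{(j+1)}$ and then try to \emph{compute} $\delta(y_l)$ explicitly for every $l \in [1,p^{i-1}(k)-1]$, so that matching coefficients of the monomials yields the two equations. The paper never computes $\delta(y_l)$. For $l \ne p^i(k)$ it only needs the inclusion $\delta(y_l) \in A_{p^{i-1}(k)-1} \subset \Gamma'_{p^{i-1}(k)-1}[y_{p^i(k)}]$ (nonnegative $y_{p^i(k)}$-degree, by Lemma-Notation \ref{mon}), together with the fact that $d^{(i)}-d^{(i+1)}$ and $d^{(i+1)}$ live in $y_{p^i(k)}$-degrees $-1$ and $\ge 0$ respectively; a one-line degree count then forces $\{d^{(i)}-d^{(i+1)},y_l\}-\theta(y_l)(d^{(i)}-d^{(i+1)})=0$. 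For $l=p^i(k)$ it uses only the single explicit value $\delta(y_{p^i(k)})=\eta y_{p^i(k)}d^{(i)}$ (which is just the definition of $d^{(i)}$) and then projects the resulting identity onto the monomial $d^{(i)}-d^{(i+1)}$, exactly as in your final coefficient-matching step.

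The step you flag as ``the principal difficulty'' is indeed where your proof stops being a proof: you assert $\delta(y_l)=0$ for $l$ outside the level set $K$, and sketch that it should follow from Theorem \ref{HPP-Poi-CGL} applied to the prefix extension and from Lemma \ref{k-p(k)}. That is true, but it is not a small remark. One has to (i) verify, using injectivity of $p$, that the appropriate $y_{p^s(k)}$ is a homogeneous Poisson prime of $A_l$, so that Theorem \ref{HPP-Poi-CGL} identifies it as the unique element of $P_l$ on which $\delta$ is nonzero; (ii) handle the case $l < p^m(k)$, where one must show $\delta|_{A_l}=0$ by propagating Lemma \ref{k-p(k)} \emph{upward} along the level set of the candidate $p(x_0)$ to reach a Poisson prime of $A$ and invoke uniqueness there; and (iii) finish by propagating downward with Lemma \ref{k-p(k)}. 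All of this is avoidable. My suggestion is therefore not that your plan fails, but that you should not leave (i)--(iii) as a parenthetical ``bookkeeping'' aside, since it is the crux of your version of the argument; alternatively, notice that for $l\ne p^i(k)$ the degree argument in $y_{p^i(k)}$ makes the entire computation of $\delta(y_l)$ unnecessary, which is both shorter and more robust.
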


\begin{proof}
Recall from Lemma-Notation \ref{mon} that
\[ \Gamma_{p^{i-1}(k)-1}=\mathbf{k}[y^{\pm 1}_1,...,y^{\pm 1}_{p^{i-1}(k)-1}], \ \Gamma'_{p^{i-1}(k)-1}=\mathbf{k}[y^{\pm 1}_1,...,y^{\pm 1}_{p^{i}(k)-1},y^{\pm 1}_{p^{i}(k)+1},...,y^{\pm 1}_{p^{i-1}(k)-1}]. \]
For $l \in [1,p^{i-1}(k)-1]$, by (\ref{dis1}),
\[ \{d^{(i)}, \ y_l\}-\theta(y_l)d^{(i)}=\delta(y_l) \in A_{p^{i-1}(k)-1}. \]
Since $y_{p^{i}(k)}$ is a homogeneous Poisson prime element of $A_{p^{i-1}(k)-1}$, by Theorem \ref{HPP-Poi-CGL}, $A_{p^{i-1}(k)-1} \subset \Gamma'_{p^{i-1}(k)-1}[y_{p^{i}(k)}]$. Thus,
\[ \{d^{(i)}, \ y_l\}-\theta(y_l)d^{(i)} \in \Gamma'_{p^{i-1}(k)-1}[y_{p^{i}(k)}], \ \ l \in [1,p^{i-1}(k)-1]. \]
As $d^{(i+1)} \in \Gamma'_{p^{i-1}(k)-1}[y_{p^{i}(k)}]$,
\[ \{d^{(i)}-d^{(i+1)}, \ y_l\}-\theta(y_l)(d^{(i)}-d^{(i+1)}) \in \Gamma'_{p^{i-1}(k)-1}[y_{p^{i}(k)}], \ \ l \in [1,p^{i-1}(k)-1]. \]
By 1) of Lemma \ref{d-M}, $d^{(i)}-d^{(i+1)}$ is a monomial in $\Gamma'_{p^{i-1}(k)-1}[y^{\pm 1}_{p^{i}(k)}]$ with degree -1. Then
\[ \{d^{(i)}-d^{(i+1)}, \ y_l\}=\theta(y_l)(d^{(i)}-d^{(i+1)}), \ \ l \in [1,p^{i-1}(k)-1] \setminus \{p^i(k)\}. \]
Through direct computation, $\{d^{(i)}, \ y_{p^i(k)}\}=(\theta(y_{p^{i}(k)})+\eta y_{p^i(k)})d^{(i)}$. As $d^{(i)}-d^{(i+1)}$ is a monomial term of $d^{(i)} \in \Gamma$,
\[ \{d^{(i)}-d^{(i+1)}, \ y_{p^{i}(k)}\}=(\theta(y_{p^{i}(k)})+\eta y_{p^i(k)})(d^{(i)}-d^{(i+1)}).\]
\end{proof}

Define $D^{(m+1)}=0$ and $D^{(m)}=f\left(\frac{\delta(x_{p^{m}(k)})}{\eta x_{p^{m}(k)}}\right) \in \Gamma_q$. Inductively define
\begin{equation}
\label{D-D}
D^{(i)}=D^{(i+1)}+\omega_{p^{i}(k)}Y^{-1}_{p^{i}(k)} \ast B_{p^{i}(k)} \ast (D^{(i+1)}-D^{(i+2)}) \in \Gamma_q, \ \ i \in [0,m-1].
\end{equation}
It is easy to check that $D=D^{(0)}$. Recall from $\S$\ref{subsec-sym-Poi-CGL} that $d^{(m)}=\frac{\delta(y_{p^{m}(k)})}{\eta y_{p^{m}(k)}}=\frac{\delta(x_{p^{m}(k)})}{\eta x_{p^{m}(k)}}$. Hence, $D^{(m)}=f(d^{(m)})$ is a monomial in $\Gamma_q$. Inductively, one sees that $D^{(i)}-D^{(i+1)}$, $D^{(i+1)}-D^{(i+2)}$,..., $D^{(m)}-D^{(m+1)}$ are the monomial terms of $D^{(i)} \in \Gamma_q$ for $i \in [0,m]$.

\begin{lemma}
\label{lie in}
For $i \in [0,m]$,

1) $d^{(i)}-d^{(i+1)}=D^{(i)}-D^{(i+1)}+(q-1)\Gamma_q$;

2) $Y_{p^{i}(k)} \ast D^{(i)} \in \mathcal{A}_{p^{i}(k)}$.
\end{lemma}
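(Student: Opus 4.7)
Both parts proceed by downward induction on $i\in[0,m]$, handled in tandem since part (1) is used to identify semiclassical limits during the argument for part (2).

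For (1), the base case $i=m$: Lemma \ref{d-M} part 1 (and the convention $y_{p^{m+1}(k)}=1$) gives that $d^{(m)}$ is a scalar multiple of a single Laurent monomial $y^w$ in $\Gamma$, so $D^{(m)}=f(d^{(m)})$ is the same scalar times $Y^w$. Since $Y_j\equiv y_j\pmod{(q-1)\mathcal{A}}$ by Lemma \ref{y-Y}, we get $Y^w\equiv y^w\pmod{(q-1)\Gamma_q}$. For the inductive step, compare the quantum recursion (\ref{D-D}), $D^{(i)}-D^{(i+1)}=\omega_{p^i(k)}Y^{-1}_{p^i(k)}\ast B_{p^i(k)}\ast(D^{(i+1)}-D^{(i+2)})$, with the Poisson recursion $d^{(i)}-d^{(i+1)}=(b_{p^i(k)}/y_{p^i(k)})(d^{(i+1)}-d^{(i+2)})$ from Lemma \ref{d-M} part 2. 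Modulo $(q-1)\Gamma_q$: $\omega_{p^i(k)}=q^{\eta_{p^i(k)}}\to 1$; $Y^{-1}_{p^i(k)}\to y^{-1}_{p^i(k)}$; $B_{p^i(k)}\to b_{p^i(k)}$ because both are the constant terms (in $Y_{p^{i+1}(k)}$ and $y_{p^{i+1}(k)}$ respectively) of $C_{p^i(k)}\in\mathcal{A}_{p^i(k)-1}$ and $c_{p^i(k)}\in A_{p^i(k)-1}$, which satisfy $C_{p^i(k)}=Y_{p(p^i(k))}X_{p^i(k)}-Y_{p^i(k)}\equiv c_{p^i(k)}\pmod{(q-1)\mathcal{A}}$ by Lemma \ref{y-Y}; and the remaining factor is controlled by the inductive hypothesis.

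For (2), the base $i=m$ is explicit: writing $d^{(m)}=\alpha y^{w-e_{p^m(k)}}$ with $\alpha\in\mathbf{k}^\times$ and $w\in\mathbb{N}^n$ supported on $[1,p^m(k)-1]$, one computes $Y_{p^m(k)}\ast f(d^{(m)})=\alpha Y_{p^m(k)}\ast Y_1^{w_1}\ast\cdots\ast Y_{p^m(k)-1}^{w_{p^m(k)-1}}\ast Y_{p^m(k)}^{-1}$; commuting $Y_{p^m(k)}$ past the central product picks up a power of $q$, and $Y_{p^m(k)}\ast Y_{p^m(k)}^{-1}=1$, leaving a scalar multiple of $Y^w\in\mathcal{A}_{p^m(k)-1}\subset\mathcal{A}_{p^m(k)}$. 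For the inductive step, multiplying (\ref{D-D}) by $Y_{p^i(k)}$ on the left gives
\[
Y_{p^i(k)}\ast D^{(i)}=Y_{p^i(k)}\ast D^{(i+1)}+\omega_{p^i(k)}\,B_{p^i(k)}\ast(D^{(i+1)}-D^{(i+2)}).
\]
Unrolling (\ref{D-D}) iteratively displays $D^{(i+1)}-D^{(i+2)}$ as an explicit product that contains a $Y^{-1}_{p^{i+1}(k)}$ factor. Combined with Lemma-Notation \ref{qmon} (which guarantees that $B_{p^i(k)}$ is a monomial in $\Gamma'_{q,p^i(k)-1}$ and therefore free of $Y_{p^{i+1}(k)}$) and the inductive hypothesis $Y_{p^{i+1}(k)}\ast D^{(i+1)}\in\mathcal{A}_{p^{i+1}(k)}$, a direct calculation of the two summands as quantum monomials in $\Gamma_q$ shows that the negative powers of $Y_{p^{i+1}(k)}$ in each summand cancel, leaving the total sum in $\mathcal{A}_{p^i(k)}$.

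The principal obstacle is precisely this cancellation in part (2): each of the two summands individually contains a $Y^{-1}_{p^{i+1}(k)}$ factor, so each lies a priori only in $\Gamma_q$ and not in $\mathcal{A}$; only their specific combination lands in $\mathcal{A}$. Classically, the corresponding cancellation is the identity $y_{p^i(k)}d^{(i)}=\delta(y_{p^i(k)})/\eta\in A_{p^i(k)}$ that follows from Lemma \ref{d-M} and the defining property of the distinguished element. Lifting this identity to the quantum setting, with the correct $q$-powers coming from commutations in $\Gamma_q$, is exactly where part (1) enters as the bridge between the Poisson and quantum pictures, and is the technical heart of the argument.
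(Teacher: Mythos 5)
Part (1) of your proof is correct and matches the paper's argument. Part (2) is where the problem lies.

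Your inductive step for part (2) asserts that, after multiplying the recursion (\ref{D-D}) by $Y_{p^i(k)}$ on the left, ``a direct calculation of the two summands as quantum monomials in $\Gamma_q$ shows that the negative powers of $Y_{p^{i+1}(k)}$ in each summand cancel.'' This cannot happen and is not the mechanism at work. The two summands are $Y_{p^i(k)}\ast D^{(i+1)}$ and $\omega_{p^i(k)}B_{p^i(k)}\ast(D^{(i+1)}-D^{(i+2)})$. Expanding both into monomials in $\Gamma_q$, the $Y_{p^{i+1}(k)}$-degree $-1$ part of the first has $Y_{p^i(k)}$-degree $+1$ (it is $Y_{p^i(k)}\ast Y^{-1}_{p^{i+1}(k)}\ast Y^{-1}_{p^{i+2}(k)}\ast\mathcal{M}^{(i+1)}$ up to scalar), while the $Y_{p^{i+1}(k)}$-degree $-1$ part of the second has $Y_{p^i(k)}$-degree $0$ (since $B_{p^i(k)}$ is a monomial in $\Gamma'_{\mathbf{q},p^i(k)-1}$, hence free of both $Y_{p^i(k)}$ and $Y_{p^{i+1}(k)}$). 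These are distinct quantum monomials and do not cancel. Indeed $Y_{p^i(k)}\ast D^{(i)}$, despite lying in $\mathcal{A}_{p^i(k)}$, genuinely has a nonzero $Y_{p^{i+1}(k)}$-degree $-1$ component when expanded in $\Gamma_q$; this is no obstruction, because elements of $\mathcal{A}_{p^i(k)}$ routinely carry negative $Y$-powers --- already $X_{p^i(k)}=Y^{-1}_{p^{i+1}(k)}\ast(Y_{p^i(k)}+C_{p^i(k)})$ does. So ``absence of negative powers'' is neither necessary nor what needs to be shown, and tracking $Y$-degrees alone cannot decide membership in $\mathcal{A}$.

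What is actually needed, and what the paper does, is an extra structural step that you have omitted. After multiplying (\ref{D-D}) by $Y_{p^i(k)}$, one must further substitute $Y_{p^i(k)}=Y_{p^{i+1}(k)}X_{p^i(k)}-C_{p^i(k)}$ into the first summand, producing a term $X_{p^i(k)}\ast\sigma^{-1}_{p^i(k)}(Y_{p^{i+1}(k)})\ast D^{(i+1)}$ that is handled by the inductive hypothesis $Y_{p^{i+1}(k)}\ast D^{(i+1)}\in\mathcal{A}_{p^{i+1}(k)}$, plus a leftover $-\omega_{p^i(k)}C_{p^i(k)}\ast D^{(i+1)}$. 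The problem then reduces to showing $B_{p^i(k)}\ast(D^{(i+1)}-D^{(i+2)})-C_{p^i(k)}\ast D^{(i+1)}\in\mathcal{A}_{p^i(k)-1}$. One first checks that $Y_{p^{i+1}(k)}$ times this bracket lies in $\mathcal{A}_{p^i(k)-1}$ --- here part (1), Lemma \ref{d-M}, and the normality of $Y_{p^{i+1}(k)}$ together with the inductive hypothesis all enter --- and then, crucially, one invokes Lemma \ref{prime} (primality of $Y_{p^{i+1}(k)}$ in $\mathcal{A}_{p^i(k)-1}$, not merely in $\mathcal{A}^{ex}_{p^i(k)-1}$) to divide out the $Y_{p^{i+1}(k)}$ factor. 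The cancellation of $Y_{p^{i+1}(k)}$-degree $-1$ terms that the paper uses occurs at this later stage, inside the bracket $B_{p^i(k)}\ast(D^{(i+1)}-D^{(i+2)})-C_{p^i(k)}\ast D^{(i+1)}$ (between $B_{p^i(k)}$, the $Y_{p^{i+1}(k)}$-constant term of $C_{p^i(k)}$, and the $Y^{-1}_{p^{i+1}(k)}$-part of $D^{(i+1)}$), and even so it only serves to set up the primality argument, not to replace it. Your proposal never brings in the expansion $Y_{p^i(k)}=Y_{p^{i+1}(k)}X_{p^i(k)}-C_{p^i(k)}$, never uses Lemma \ref{prime}, and thus does not establish membership in $\mathcal{A}_{p^i(k)}$; this is a genuine gap.

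Your base case for part (2) is fine, modulo one point worth making explicit: the passage from $y^v\in A_{p^m(k)-1}$ to $Y^v\in\mathcal{A}_{p^m(k)-1}$ rests on the fact that $A$ and $\mathcal{A}^{ex}$ share level sets (a clause in the definition of preferred quantization), so that the same $Y_j$ are homogeneous prime elements of $\mathcal{A}^{ex}_{p^m(k)-1}$ as the $y_j$ are of $A_{p^m(k)-1}$.
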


\begin{proof}
Keep in mind that, for $j \in [1,n]$, $y_j=Y_j + (q-1)\Gamma_q$ by Lemma \ref{y-Y}.

1) For $j \in [1,n]$, it follows from $y_j=Y_j + (q-1)\Gamma_q$ that $c_j=C_j + (q-1)\Gamma_q$. By Lemma-Notation \ref{mon} and Lemma-Notation \ref{qmon}, $b_{p^{i}(k)}=B_{p^{i}(k)}+(q-1)\Gamma_q$ for $i \in [0,m]$. Since $D^{(m)}=f(d^{(m)})$, $d^{(m)}-d^{(m+1)}=D^{(m)}-D^{(m+1)}+(q-1)\Gamma_q$. By definition, $\omega_{p^{i}(k)}+(q-1)\Gamma_q=1$ for $i \in [0,m]$. By 2) of Lemma \ref{d-M} and by (\ref{D-D}), one inductively sees that
\[ d^{(i)}-d^{(i+1)}=D^{(i)}-D^{(i+1)}+(q-1)\Gamma_q, \ \ i \in [0,m]. \]

2) As $d^{(m)}=\frac{\delta(y_{p^{m}(k)})}{\eta y_{p^{m}(k)}}$ is a monomial in $\Gamma$ and $D^{(m)}=f\left(\frac{\delta(y_{p^{m}(k)})}{\eta y_{p^{m}(k)}}\right)$, $Y_{p^{m}(k)} \ast D^{(m)}$ is a scalar multiple of $f(\delta(y_{p^m(k)}))$. Since $\delta(y_{p^{m}(k)}) \in A_{p^{m}(k)-1}$, $\delta(y_{p^{m}(k)})$ is a scalar multiple of $y^v$ for some $v \in \mathbb{N}^n$ and $y^v \in A_{p^{m}(k)-1}$. Thus, $f(\delta(y_{p^{m}(k)})) \in \mathcal{A}_{p^{m}(k)-1}$, which implies that
\[ Y_{p^{m}(k)} \ast D^{(m)} \in \mathcal{A}_{p^{m}(k)-1} \subset \mathcal{A}_{p^{m}(k)}. \]
Assume 2) of Lemma \ref{lie in} holds for $D^{(m)}$,$D^{(m-1)}$,...,$D^{(i)}$ and consider $D^{(i-1)}$. It follows from the definition that
\[ Y_{p^{i-1}(k)} \ast D^{(i-1)}=Y_{p^{i-1}(k)} \ast D^{(i)}+\omega_{p^{i-1}(k)}B_{p^{i-1}(k)} \ast (D^{(i)}-D^{(i+1)}).  \]
Through direct computation, one has
\[ Y_{p^{i-1}(k)} \ast D^{(i)}=X_{p^{i-1}(k)} \ast \sigma^{-1}_{p^{i-1}(k)}(Y_{p^{i}(k)}) \ast D^{(i)}-\omega_{p^{i-1}(k)}C_{p^{i-1}(k)} \ast D^{(i)}. \]
As $Y_{p^{i}(k)} \ast D^{(i)} \in \mathcal{A}_{p^{i}(k)}$, $Y_{p^{i-1}(k)} \ast D^{(i-1)} \in \mathcal{A}_{p^{i-1}(k)}$ if and only if
\[ B_{p^{i-1}(k)} \ast (D^{(i)}-D^{(i+1)})-C_{p^{i-1}(k)} \ast D^{(i)} \in \mathcal{A}_{p^{i-1}(k)}. \]
By Lemma \ref{d-M}, $M^{(i-1)}:=y_{p^{i}(k)}b_{p^{i-1}(k)}(d^{(i)}-d^{(i+1)}) \in A_{p^{i-1}(k)-1}$. As $M^{(i-1)}$ is a monomial in $\Gamma$, $M^{(i-1)}$ is a scalar multiple of $y^v$ for some $v \in \mathbb{N}^n$ and $y^v \in A_{p^{i-1}(k)-1}$. By part 1) and by the fact that $b_{p^{i}(k)}=B_{p^{i}(k)}+(q-1)\Gamma_q$,
\[ Y_{p^{i}(k)} \ast B_{p^{i-1}(k)} \ast (D^{(i)}-D^{(i+1)})+(q-1)\Gamma_q=M^{(i-1)}. \]
Since $Y_{p^{i}(k)} \ast B_{p^{i-1}(k)} \ast (D^{(i)}-D^{(i+1)})$ is a monomial in $\Gamma_q$, it is a scalar multiple of $f(M^{(i-1)}) \in \mathcal{A}_{p^{i-1}(k)-1}$. Hence, $Y_{p^{i}(k)} \ast B_{p^{i-1}(k)} \ast (D^{(i)}-D^{(i+1)}) \in \mathcal{A}_{p^{i-1}(k)-1}$. By the inductive assumption, as $Y_{p^{i}(k)}$ is prime, hence normal, in $\mathcal{A}_{p^{i-1}(k)-1}$, one has $Y_{p^{i}(k)} \ast C_{p^{i-1}(k)} \ast D^{(i)} \in \mathcal{A}_{p^{i-1}(k)-1}$. Hence,
\begin{equation}
\label{in1}
Y_{p^{i}(k)} \ast (B_{p^{i-1}(k)} \ast (D^{(i)}-D^{(i+1)})-C_{p^{i-1}(k)} \ast D^{(i)}) \in \mathcal{A}_{p^{i-1}(k)-1}.
\end{equation}
It is easy to check that $B_{p^{i-1}(k)} \ast (D^{(i)}-D^{(i+1)})-C_{p^{i-1}(k)} \ast D^{(i)}$ contains no negative power of $Y_{p^{i}(k)}$. It follows that
\[ B_{p^{i-1}(k)} \ast (D^{(i)}-D^{(i+1)})-C_{p^{i-1}(k)} \ast D^{(i)}=a_1 \ast a^{-1}_2, \]
for some $a_1,a_2 \in \mathcal{A}_{p^{i-1}(k)-1}$ and $a_2 \notin Y_{p^{i}(k)} \ast \mathcal{A}_{p^{i-1}(k)-1}$. By (\ref{in1}),
\[ Y_{p^{i}(k)} \ast (B_{p^{i-1}(k)} \ast (D^{(i)}-D^{(i+1)})-C_{p^{i-1}(k)} \ast D^{(i)}) \ast a_2 \in Y_{p^{i}(k)} \ast \mathcal{A}_{p^{i-1}(k)-1}. \]
Since $Y_{p^{i}(k)}$ is prime in $\mathcal{A}_{p^{i-1}(k)-1}$ by Lemma \ref{prime} and $a_2 \notin Y_{p^{i}(k)} \ast \mathcal{A}_{p^{i-1}(k)-1}$, one has
\[ Y_{p^{i}(k)} \ast (B_{p^{i-1}(k)} \ast (D^{(i)}-D^{(i+1)})-C_{p^{i-1}(k)} \ast D^{(i)}) \in Y_{p^{i}(k)} \ast \mathcal{A}_{p^{i-1}(k)-1}, \]
which implies that
\[ B_{p^{i-1}(k)} \ast (D^{(i)}-D^{(i+1)})-C_{p^{i-1}(k)} \ast D^{(i)} \in \mathcal{A}_{p^{i-1}(k)-1} \subset \mathcal{A}_{p^{i-1}(k)}. \]
Therefore, 2) of Lemma \ref{lie in} is satisfied for $D^{(i-1)}$. By induction, 2) of Lemma \ref{lie in} has been proven.
\end{proof}

{\bf Proof of Theorem \ref{one-ste-qua}}: We first need to prove that $\Delta(\mathcal{A}) \subset \mathcal{A}$. When $\delta=0$, $D=0$ and there is nothing to prove. 

Assume $\delta \neq 0$. Let $\Gamma'=\mathbf{k}[y^{\pm 1}_1,...,y^{\pm 1}_{k-1},y^{\pm 1}_{k+1},...,y^{\pm 1}_{n}]$. By Lemma \ref{d-y},
\begin{equation}
\label{in3}
\{d^{(0)}-d^{(1)}, \ y_j\}-\theta(y_j)(d^{(0)}-d^{(1)}) \in \Gamma'[y_k], \ \ j \in [1,n].
\end{equation}
Let $\Gamma'_q$ be the quantum $L$-torus generated by $Y_1,...,Y_{k-1},Y_{k+1},...,Y_{n}$. Since $Y_k$ is a homogeneous prime element of $\mathcal{A}^{ex}$, by Theorem \ref{HP-CGL}, $\mathcal{A} \subset \Gamma'_q[Y_k]$. By definition, $D=D^{(0)}=(D^{(0)}-D^{(1)})+D^{(1)}$, where $D^{(0)}-D^{(1)}$ is a monomial in $\Gamma_q$ and $D^{(1)} \in \Gamma'_q$. By 1) of Lemma \ref{lie in}, $D^{(0)}-D^{(1)}+(q-1)\Gamma_q=d^{(0)}-d^{(1)}$. By (\ref{in3}),
\[ (D^{(0)}-D^{(1)}) \ast Y_j-\sigma(Y_j) \ast (D^{(0)}-D^{(1)}) \in \Gamma'_q[Y_k], \ \ j \in [1,n]. \]
As $D^{(1)} \in \Gamma'_q$ and $D=D^{(0)}$,
\[ D \ast Y_j-\sigma(Y_j) \ast D \in \Gamma'_q[Y_k], \ \ j \in [1,n]. \]
Let $a$ be an arbitrary element in $\mathcal{A}$. As $a \in \Gamma'_q[Y_k]$,
\[ D \ast a-\sigma(a) \ast D \in \Gamma'_q[Y_k],\]
which implies that
\[ D \ast a-\sigma(a) \ast D=a_1 \ast a^{-1}_2, \]
for some $a_1,a_2 \in \mathcal{A}$ and $a_2 \notin Y_{k} \ast \mathcal{A}$. By part 2) of Lemma \ref{lie in}, $Y_k \ast D \in \mathcal{A}$. Since $Y_{k}$ is prime, hence normal, in $\mathcal{A}$ by Lemma \ref{prime},
\begin{equation}
\label{in2}
Y_k \ast (D \ast a-\sigma(a) \ast D) \in \mathcal{A}.
\end{equation}
It follows that
\[ Y_{k} \ast (D \ast a-\sigma(a) \ast D) \ast a_2 \in Y_{k} \ast \mathcal{A}. \]
Since $Y_{k}$ is prime in $\mathcal{A}$ and $a_2 \notin Y_{k} \ast \mathcal{A}$, one has
\[ Y_{k} \ast (D \ast a-\sigma(a) \ast D) \in Y_{k} \ast \mathcal{A}, \]
which implies that
\[ D \ast a-\sigma(a) \ast D \in \mathcal{A}. \]
Therefore, $\Delta(\mathcal{A}) \subset \mathcal{A}$.

We have proved that $\mathcal{B}=\mathcal{A}[X_0;\sigma,\Delta]$ is an Ore extension of $\mathcal{A}$, where $\sigma$ and $\Delta$ are $L$-linear maps. By the definition of $\sigma$ and $\Delta$, it is easy to check that $\mathcal{B}/(q-1)\mathcal{B}$ is commutative. It is clear that $\mathcal{B}$ is a free $L$-module. Hence, $\mathcal{B}$ is a quantum $L$-algebra and $\mathcal{B}=\mathcal{A}[X_0;\sigma,\Delta]$ is a quantum-Ore extension of $\mathcal{A}$. We still need to show two things:

a) $\mathcal{B}$ can be written as a symmetric quantum-CGL extension
\[ \mathcal{B}=L[X_0;\sigma_0,\Delta_0][X_1;\sigma_1,\Delta_1]\cdots [X_n;\sigma_n,\Delta_n]_{(\lambda_0,\lambda_1,...,\lambda_n)}; \]

b) $\mathcal{B}$ is a preferred quantization of $B$.\\
{\bf To prove a)}: When $\delta=0$, $\Delta=0$ and there is nothing to show.

Assume $\delta \neq 0$. For $j \in [1,n]$, let $\Gamma_j=\mathbf{k}[y^{\pm 1}_1,...,y^{\pm 1}_{j}]$ and let $\Gamma_{q,j}$ be the quantum $L$-torus generated by $Y_1,...,Y_j$. Let $l \in [1,n]$. We choose $i \in [0,m+1]$ such that $p^{i}(k) \leq l < p^{i-1}(k)$. Then
\begin{align}
\label{deltal}
&\delta(x_l)=\{d^{(i)}, \ x_l\}-\theta(x_l)d^{(i)}, \ \ \text{and} \\
\label{Deltal}
&\Delta(X_l)=D^{(i)} \ast X_l-\sigma(X_l) \ast D^{(i)}.
\end{align}
By Theorem \ref{HPP-Poi-CGL}, $x_l=\frac{y_l+c_l}{y_{p(l)}}$, where $c_l \in A_{l-1}$. By (\ref{deltal}),
\[ \delta(x_l)=\{d^{(i)}, \ x_l\}-\theta(x_l)d^{(i)}=\left\{d^{(i)}, \ \frac{y_l+c_l}{y_{p(l)}}\right\}-\theta\left(\frac{y_l+c_l}{y_{p(l)}}\right)d^{(i)}. \]
Since $B$ is symmetric, $\delta(x_l) \in A_{l-1} \subset \Gamma_{l-1}$, which means
\begin{equation}
\label{in4}
\left\{d^{(i)}, \ \frac{y_l+c_l}{y_{p(l)}}\right\}-\theta\left(\frac{y_l+c_l}{y_{p(l)}}\right)d^{(i)} \in \Gamma_{l-1}.
\end{equation}

Case 1: $p^{i}(k) < l$. In this case, $d^{(i)} \in \Gamma_{l-1}$ and $\frac{c_l}{y_{p(l)}} \in \Gamma_{l-1}$. By (\ref{in4}),
\begin{equation}
\label{in6}
\left\{d^{(i)}, \ \frac{y_l}{y_{p(l)}}\right\}=\theta\left(\frac{y_l}{y_{p(l)}}\right)d^{(i)}.
\end{equation}
As $d^{(i)}-d^{(i+1)}$,$d^{(i+1)}-d^{(i+2)}$,...,$d^{(m)}-d^{(m+1)}$ are the monomial terms of $d^{(i)} \in \Gamma$, they all satisfy (\ref{in6}) by replacing $d^{(i)}$. As $D^{(i)}-D^{(i+1)}$,$D^{(i+1)}-D^{(i+2)}$,...,$D^{(m)}-D^{(m+1)}$ are the monomial terms of $D^{(i)} \in \Gamma_q$, by 1) of Lemma \ref{lie in},
\[ D^{(i)} \ast (Y^{-1}_{p(l)} \ast Y_l)=\sigma(Y^{-1}_{p(l)} \ast Y_l) \ast D^{(i)}. \]
By Theorem \ref{HP-CGL}, $X_l=Y^{-1}_{p(l)} \ast (Y_l+C_l)$, where $C_l \in \mathcal{A}_{l-1}$. It follows that $Y^{-1}_{p(l)} \ast C_l \in \Gamma_{q,l-1}$. Since $D^{(i)} \in \Gamma_{q,l-1}$, by (\ref{Deltal}), $\Delta(X_l) \in \Gamma_{q,j-1}$.

Case 2: $p^{i}(k) = l$. In this case, $i \neq m+1$, otherwise $l=0$. By Lemma \ref{d-y},
\[ \{d^{(i)}-d^{(i+1)}, \ r\} =\theta(r) (d^{(i)}-d^{(i+1)}), \ \ r \in \Gamma_{p^i(k)-1}. \]
As $D^{(i)}-D^{(i+1)}$ is a monomial in $\Gamma_q$, by 1) of Lemma \ref{lie in},
\[ (D^{(i)}-D^{(i+1)}) \ast \mathbf{r}=\sigma(\mathbf{r}) \ast (D^{(i)}-D^{(i+1)}), \ \ \mathbf{r} \in \Gamma_{q,p^i(k)-1}. \]
As $Y^{-1}_{p^{i+1}(k)} \ast C_{p^{i}(k)} \in \Gamma_{q,p^i(k)-1}$,
\[ (D^{(i)}-D^{(i+1)}) \ast (Y^{-1}_{p^{i+1}(k)} \ast C_{p^{i}(k)})=\sigma(Y^{-1}_{p^{i+1}(k)} \ast C_{p^{i}(k)}) \ast (D^{(i)}-D^{(i+1)}). \]
It follows from $D^{(i+1)} \in \Gamma_{q,p^{i}(k)-1}$ that
\[ D^{(i)} \ast (Y^{-1}_{p^{i+1}(k)} \ast C_{p^{i}(k)})-\sigma(Y^{-1}_{p^{i+1}(k)} \ast C_{p^{i}(k)}) \ast D^{(i)} \in \Gamma_{q,p^{i}(k)-1}. \]
By (\ref{in4}),
\begin{equation}
\label{in5}
\left\{d^{(i)}, \ \frac{y_{p^{i}(k)}}{y_{p^{i+1}(k)}}\right\}-\theta\left(\frac{y_{p^{i}(k)}}{y_{p^{i+1}(k)}}\right)d^{(i)} \in \Gamma_{l-1}.
\end{equation}
As $d^{(i)}-d^{(i+1)}$,$d^{(i+1)}-d^{(i+2)}$,...,$d^{(m)}-d^{(m+1)}$ are the monomial terms of $d^{(i)} \in \Gamma$, they all satisfy (\ref{in5}) by replacing $d^{(i)}$. As $D^{(i)}-D^{(i+1)}$,$D^{(i+1)}-D^{(i+2)}$,...,$D^{(m)}-D^{(m+1)}$ are the monomial terms of $D^{(i)} \in \Gamma_q$, by 1) of Lemma \ref{lie in},
\[ D^{(i)} \ast (Y^{-1}_{p^{i+1}(k)} \ast Y_{p^{i}(k)})-\sigma(Y^{-1}_{p^{i+1}(k)} \ast Y_{p^{i}(k)}) \ast D^{(i)} \in \Gamma_{q,l-1}. \]
By (\ref{Deltal}), $\Delta(X_l) \in \Gamma_{q,j-1}$.

Therefore, in both cases, $\Delta(X_l) \in \Gamma_{q,j-1}$. We have proved that $\Delta(\mathcal{A}) \subset \mathcal{A}$, which implies
\[ \Delta(X_j) \in (\mathcal{A} \cap \Gamma_{q,j-1})=\mathcal{A}_{j-1}. \]

Given that $\Delta(X_j) \in \mathcal{A}_{j-1}, \ j \in [1,n]$, it follows directly from the fact that
\[ B=\mathbf{k}[x_0;\theta_0,\delta_0][x_1;\theta_1,\delta_1] \cdots [x_n;\theta_n,\delta_n]_{(\lambda_0,\lambda_1,...,\lambda_n)} \]
is a symmetric integral Poisson-CGL extension that
\[ \mathcal{B}=L[X_0;\sigma_0,\Delta_0][X_1;\sigma_1,\Delta_1]\cdots [X_n;\sigma_n,\Delta_n]_{(\lambda_0,\lambda_1,...,\lambda_n)} \]
is a symmetric quantum-CGL extension.\\
{\bf To prove b)}: Note that by 1) of Lemma \ref{d-M}, $d=D+(q-1)\Gamma_q$. By the definition of $\sigma$ and $\Delta$, it is easy to see that $\mathcal{B}$ is a quantization of $B$ in the sense of Definition \ref{ite-qua}.

By Remark \ref{extension}, $\mathcal{B}^{ex}$ is a symmetric CGL extension over $\mathbb{K}$. Define the $\mathbb{N}^{n+1}$-valued degree for elements in $B$ and $\mathcal{B}^{ex}$ as in $\S$\ref{subsec-not}. Denote by $\mathcal{B}^{ex}_{[0,j]}$ the subalgebra of $\mathcal{B}^{ex}$ generated by $X_0,X_1,...,X_j$, for $j \in [0,n]$.  Let $Y_{\mathcal{B}^{ex}}=(Y'_0,Y'_1,...,Y'_n)$ be the sequence of homogeneous prime elements of $\mathcal{B}^{ex}$. By definition, $Y'_0=X_0 \in \mathcal{B}$. Let $Y_B=(y'_0,y'_1,...,y'_n)$ be the sequence of homogeneous Poisson prime elements of $B$. By definition, $y'_0=x_0$.

Consider the case when $\delta=0$, which implies $\Delta=0$. It is easy to see that $y'_j=y_j$ and $Y'_j=Y_j$ for $j \in [1,n]$. Since $\mathcal{A}^{ex}$ and $\mathcal{A}$ share the same level sets, by 2) of Lemma \ref{one-one} and by 2) of Lemma \ref{qone-one}, $y_j$ and $Y_j$ have the same degree for $j \in [1,n]$. Clearly, $Y'_0=X_0$ and $y'_0=x_0$ have the same degree. Hence, $y'_j$ and $Y'_j$ have the same degree for $j \in [0,n]$. By 2) of Lemma \ref{one-one} and by 2) of Lemma \ref{qone-one}, $B$ and $\mathcal{B}^{ex}$ share the same level sets.

Consider the case when $\delta \neq 0$. As $\mathcal{B}^{ex}$ is symmetric, $\mathcal{B}^{ex}_{[0,j]}$ has the presentation
\[ \mathbb{K}[X_1;\sigma_1,\Delta_1]\cdots [X_j;\sigma_n,\Delta_n][X_0;\sigma,\Delta]_{(\lambda_1,...,\lambda_n,\lambda_0)} \]
as a CGL extension, for $j \in [1,n]$. By (\ref{Deltal}) and by the definition of $D^{(m)}$, one sees that $\Delta(Y_{p^m(k)})=\Delta(X_{p^m(k)}) \neq 0$. By Lemma \ref{qk-p(k)}, for $j \in [1,n]$, $\Delta(Y_j) \neq 0$ if and only if $j=p^i(k)$ for some $i \in [0,m]$.

Case 1: $j=p^i(k)$ for some $i \in [0,m]$. By Theorem \ref{HP-CGL},
\begin{equation}
\label{Y'j}
Y_{p^i(k)} \ast X_0+(1-\omega)^{-1}(\Delta \circ \sigma^{-1})(Y_{p^i(k)}) \in \mathcal{B}^{ex}_{[0,p^i(k)]} \setminus \mathcal{B}^{ex}_{[0,p^i(k)-1]} \end{equation}
is a homogeneous prime element of $\mathcal{B}^{ex}_{[0,p^i(k)]}$. Sill by Theorem \ref{HP-CGL}, (\ref{Y'j}) is a scalar multiple of $Y'_{p^i(k)}$. Through direct computation,
\[ (\ref{Y'j})=X_0 \ast \sigma^{-1}(Y_{p^i(k)})+\omega(1-\omega)^{-1}(\Delta \circ \sigma^{-1})(Y_{p^i(k)}). \]
It is easy to see that $X_0 \ast Y_{p^i(k)}+\omega(1-\omega)^{-1}\Delta(Y_{p^i(k)})$ is a scalar multiple of (\ref{Y'j}). Since $\mathcal{B}^{ex}$ is symmetric, the degree of $\Delta(Y_{p^i(k)})$ is less than that of $Y_{p^i(k)}$. As the highest degree term of $Y_{p^i(k)}$ has coefficient 1 by definition, so does that of $X_0 \ast Y_{p^i(k)}+\omega(1-\omega)^{-1}\Delta(Y_{p^i(k)})$. Since the highest degree term of $Y'_{p^i(k)}$ also has coefficient 1 by definition,
\begin{equation}
\label{commute}
Y'_{p^i(k)}=X_0 \ast Y_{p^i(k)}+\omega(1-\omega)^{-1}\Delta(Y_{p^i(k)}).
\end{equation}

Case 2: $j \in [1,n]$ and $j \neq p^i(k)$ for any $i \in [0,m]$. By Theorem \ref{HP-CGL}, $Y_j \in \mathcal{B}^{ex}_{[0,j]} \setminus \mathcal{B}^{ex}_{[0,j-1]}$ is a homogeneous prime element of $\mathcal{B}^{ex}_{[0,j]}$. Since the highest degree terms of $Y'_j$ and $Y_j$ both have coefficients 1, $Y'_j=Y_j$.

Apply similar discussion to $B$. When $j=p^i(k)$ for some $i \in [0,m]$,
\[ y'_{p^i(k)}=x_0 \ast y_{p^i(k)}-\frac{\delta(y_{p^i(k)})}{\eta}. \]
When $j \in [1,n]$ and $j \neq p^i(k)$ for any $i \in [0,m]$, $y'_j=y_j$. Since $\mathcal{A}^{ex}$ and $\mathcal{A}$ share the same level sets, by 2) of Lemma \ref{one-one} and by 2) of Lemma \ref{qone-one}, $y_j$ and $Y_j$ have the same degree for $j \in [1,n]$. Clearly, $y'_0=x_0$ and $Y'_0=X_0$ have the same degree. When $j \in [1,n]$ and $j \neq p^i(k)$ for any $i \in [0,m]$, $y'_j=y_j$ and $Y'_j=y'_j$ have the same degree. Assume $j=p^i(k)$ for some $i \in [0,m]$. Since $B$ is symmetric, the degree of $\delta(y_{p^i(k)})$ is less than that of $y_{p^i(k)}$. It follows that the degree of $y'_{p^i(k)}$ is equal to the degree of $x_0y_{p^i(k)}$. Similarly, the degree of $Y'_{p^i(k)}$ is equal to the degree of $X_0 \ast Y_{p^i(k)}$. As $y_{p^i(k)}$ and $Y_{p^i(k)}$ have the same degree, $y'_{p^i(k)}$ and $Y'_{p^i(k)}$ have the same degree. Therefore, $y'_j$ and $Y'_j$ have the same degree for $j \in [0,n]$. By 2) of Lemma \ref{one-one} and by 2) of Lemma \ref{qone-one}, $B$ and $\mathcal{B}^{ex}$ share the same level sets.

Let $i \in [0,m]$. By the definition of $D^{(i)}$ and $\Delta$, one sees that $D^{(i)}$ is the distinguished element with respect to $\mathcal{A}^{ex}_{p^i(k)}[X_0;\sigma,\Delta]$. By Theorem \ref{HP-CGL},
\[ -(1-\omega)^{-1}(\Delta \circ \sigma^{-1})(Y_{p^i(k)})=Y_{p^{i}(k)} \ast D^{(i)}. \]
By part 2) of Lemma \ref{lie in}, $Y_{p^{i}(k)} \ast D^{(i)} \in \mathcal{A}_{p^{i}(k)}$. As $\omega$ is a power of $q$ and by 3) of Definition \ref{Q-CGL},
\[ \omega(1-\omega)^{-1}\Delta(Y_{p^{i}(k)}) \in \mathcal{A}_{p^{i}(k)}. \]
By (\ref{commute}), $Y'_{p^{i}(k)} \in \mathcal{B}$. When $j \in [1,n]$ and $j \neq p^i(k)$ for any $i \in [0,m]$, $Y'_j=Y_j \in \mathcal{B}$. Clearly, $Y'_0=X_0 \in \mathcal{B}$. Therefore, $Y_{\mathcal{B}^{ex}}$ lie in $\mathcal{B}$.

In conclusion, $\mathcal{B}=L[X_0;\sigma_0,\Delta_0][X_1;\sigma_1,\Delta_1]\cdots [X_n;\sigma_n,\Delta_n]_{(\lambda_0,\lambda_1,...,\lambda_n)}$ is a preferred quantization of $B$.

{\bf Proof of Theorem \ref{unique}}: When $\delta=0$, as $\mathcal{B}'$, $\mathcal{B}$ and $B$ share the same level sets, $\Delta'=0=\Delta$. Assume $\delta \neq 0$. By Lemma \ref{qd-M}, $\Delta'$ depends solely on the choice of $\Delta'(X_{p^m(k)})$, which is a monomial in $\Gamma_{q,p^m(k)-1}$. Since $B$ is the semiclassical limit of $\mathcal{B}'$,
\[ \frac{\Delta'(X_{p^m(k)})}{q-1}+(q-1)\mathcal{A}=\delta(x_{p^m(k)}). \]
Hence, $\Delta'(X_{p^m(k)})$ has to be a scalar multiple of $f(\delta(x_{p^m(k)}))$. Thus, $\Delta'=\epsilon \Delta$ for some $\epsilon \in \mathbb{K}$. By 3) of Definition \ref{per-qua} and by (\ref{commute}),
\[ \omega(1-\omega)^{-1}\Delta(Y_{p^m(k)}) \in \mathcal{A}_{p^{m}(k)}. \]
As $\omega$ is a power of $q$ and by 3) of Definition \ref{Q-CGL},
\[ \epsilon Y_{p^m(k)} \ast D^{(m)}=-(1-\omega)^{-1}(\Delta' \circ \sigma^{-1})(Y_{p^m(k)}) \in  \mathcal{A}_{p^{m}(k)}. \]
It follows from $\epsilon Y_{p^m(k)} \ast D^{(m)}=\epsilon Y_{p^m(k)} \ast  f(d^{(m)})$ that $\epsilon \in L$. As $\mathcal{B}'$ and $\mathcal{B}$ have the same semiclassical limit, one sees that $\epsilon|_{q=1}=1$.

\end{document}